\theoremstyle{plain}
\newtheorem{theorem}{Theorem}
\newtheorem{lemma}[theorem]{Lemma}
\newtheorem{remark}[theorem]{Remark}
\newcommand\A{\mathcal{A}}
\newcommand\T{\mathcal T}
\newcommand\RR{\mathbb R}
\newcommand\NN{\mathbb N}
\newcommand\PP{\mathbb P}
\newcommand\EE{\mathcal E}
\newcommand\nn{{\boldsymbol n}}
\newcommand\xx{{\boldsymbol x}}
\newcommand\I{\mathcal{I}}
\newcommand{\trih}{\mathcal{T}_h}
\newcommand{\triho}{\mathcal{T}_{h_0}} 
\newcommand{\triH}{{\mathcal{T}_H}}
\newcommand{\btriH}{{\partial\mathcal{T}_H}}
\newcommand\dO{\partial\Omega}
\newcommand\dtau{{\partial\tau}}
\newcommand\OO{\Omega}
\newcommand\amin{a_{\text{min}}}
\newcommand\amax{a_{\text{max}}}
\newcommand\MsFEM{\text{MsFEM}}
\newcommand\tH{{\widetilde H}}
\newcommand\Hdiv{H({\rm{div},\OO)}}
\newcommand\Hdivtau{{H({\rm{div},\tau)}}}
\newcommand\VVRM{\PP_0(\triH)}
\newcommand\tL{{\widetilde\Lambda}}
\newcommand\VH{{H^1(\triH)}}
\newcommand\tVh{{\widetilde V_h}}
\newcommand\tVhz{{\widetilde V_{h_0}}}
\newcommand\tVV{{\widetilde H^1(\triH)}}
\def\div{\operatorname{div}}
\def\E{\operatorname{\mathcal E}}
\def\bgrad{\operatorname{\boldsymbol{\operatorname{\nabla}}}} 
\def\span{\operatorname{span}}
\newcommand\bsigma{{\boldsymbol\sigma}}
\newcommand\uurm{u^0}
\newcommand\vvrm{v^0}
\newcommand\teta{{\widetilde\eta}}
\newcommand\lambdarm{\lambda^0}
\newcommand\tlambda{\widetilde\lambda}
\newcommand\murm{{\mu^0}}
\newcommand\tmu{{\widetilde\mu}}
\newcommand\tphi{{\widetilde\phi}}
\newcommand\txi{{\widetilde\xi}}
\newcommand\tv{{\widetilde v}}
\newcommand\vv{{\boldsymbol v}}
\definecolor{bluegreen}{rgb}{0,0.75,0.75}
\begin{document}
\title{A three-field Multiscale Method}
\author{Franklin de Barros, Alexandre L. Madureira, Fr\'ed\'eric Valentin}
\address{Laborat\'orio Nacional de Computa\c c\~ao Cient\'\i fica - LNCC\\Petr\'opolis, Brazil}
\email{fcbarros@lncc.br,alm@lncc.br,valentin@lncc.br}
\begin{abstract}\color{black}Inspired by the seminal paper \emph{A Three-Field Domain Decomposition Method} by F. Brezzi and L. D. Marini, we propose the Multiscale-Hybrid-Hybrid Method (MH$^2$M) for the Darcy problem. After a sequence of formal manipulations, the resulting multiscale finite element method leads to a symmetric positive definite formulation posed solely in terms of the trace variable. We establish stability and convergence results for a family of finite element spaces and highlight connections with other multiscale finite element methods. \color{black} 
\end{abstract}
\maketitle
%
%
\section{Introduction}

Simulation of fluid flow in heterogeneous domains, as found in oil reservoirs, transport of contaminants and water resources issues, is a big challenge since fine meshes are needed to capture the multiscale solution. The Darcy equation is the most representative model in the field of porous media, which in its primary formulation is composed of a Poisson equation with a multiscale coefficient. Precisely, the Poisson problem consists of finding the weak solution $u:\OO\to\RR$ of 
\begin{equation}\label{e:elliptic}
\begin{gathered}
-\div\A\bgrad u=f\quad\text{in }\OO,
\\
u=0\quad\text{on }\partial\OO,
\end{gathered}
\end{equation}
where $\OO\subset\RR^d$ for $d=2$ or $3$ is an open bounded domain with a polyhedral Lipschitz boundary $\partial\Omega$, and $f\in L^2(\OO)$. The symmetric tensor $\A\in[L^\infty(\OO)]_{\text{sym}}^{d\times d}$ is uniformly positive definite and let the positive constants $\amin$ and $\amax$ be such that 
\begin{equation}\label{e:bounds}
\amin|\vv|^2\le\A(\xx)\,\vv\cdot\vv\le\amax|\vv|^2\quad\text{for all }\vv\in\RR^d
\end{equation}
for almost all $\xx\in\OO$. The tensor $\A$ may include multiple scales.

The flow velocity, represented by the pressure gradient, is usually the quantity of interest. To approximate it, the mixed version of Darcy's equation is preferable to its elliptic~\eqref{e:elliptic} version, for which the classical inf-sup stable pairs of finite elements are the Raviart-Thomas element (RT$_k$)~\cite{RavTho77}
or the element Brezzi-Douglas-Marini (BDM$_k$) \cite{BreDouMar85,BofBreFor13}, which prescribes the continuity of the
normal velocity component combined with discontinuous pressure interpolations. In addition to being stable, these finite elements preserve local mass, a fundamental property in practical engineering applications. However, since such choices produce linear systems that represent
saddle point problems, efficient solver are harder to come by~\cite{BreForMar06}.
A way to circumvent this problem is by using  hybridization~\cite{BofBreFor13} techniques, which start with a discontinuous version of a stable element (RT$_k$ or BDM$_k$), forcing the weak continuity of the normal components through Lagrange multipliers. Velocity and pressure are now fully discontinuous and can be eliminated at the element level, leading to a positive definite symmetrical system where the only unknowns are the Lagrange multipliers~\cite{ArnBre85}. 

The publication of the seminal work by \color{black} Brezzi \color{black} and Marini~\cite{MR1262602} was followed by a series of papers investigating, in particular, preconditioning and stabilization techniques that allow for different combinations of space~\cite{MR1262602,MR1228218,AgoTho95,Rapin2004,MR3475654,10.5555/868782,MR1826573,MR1888832,MR1795297,MR1961881,MR2031400}. 
Although these concepts are related, they are not the same, specially when the estimates depend on the number of subdomains~\cite{MR1961881,MR2031400,MR3475654}. Note that stabilization is crucial in those papers, an ingredient that we \color{black} avoid. \color{black} Indeed, in principle, stabilization allows for more flexible interpolation, but \color{black} often \color{black} imposes restrictions on the meshes and interpolation spaces, e.g.~\cite{MR1228218}, or involves the inclusion of terms and stabilization parameters of nontrivial computation~\cite{MR1795297}.

Hybridization has also been used to construct and analyze multiscale methods, which have become an attractive option for dealing with primal or mixed weak forms of \eqref{e:elliptic}  when the standard Galerkin method based on continuous polynomial interpolation becomes computationally expensive. The multiscale methods share the commonality of having global problems built on the solution of local problems that scale up submesh structures. An attractive feature is that the global problem dimension is multiscale independent, and the basis functions are independent of each other and can be computed in parallel. An hybridization of the primal form of~\eqref{e:elliptic}, following~\cite{RavTho77b} yields the \emph{Multiscale Hybrid-Mixed} (MHM for short) family of methods~\cite{HarParVal13, AraHarParVal13a}, where the local problems are of Neumann type. On the contrary, the hybridization of the dual version of \eqref{e:elliptic} leads to the multiscale mortar method~\cite{ArbPenWheYot07} and the multiscale version of the HDG method in~\cite{EfeLazShi15}. Local problems are of the Dirichlet type in these cases. Other alternative multiscale methods in the context of the Darcy model (or Poisson equation) are the VMS method~\cite{HugFeiMazQui98a}, the MsFEM and GMsFEM~\cite{EfeHouWu00}, the PGEM and GEM~\cite{FerHarParVal12}, the HMM~\cite{EEng03}, the LOD and LSD method~\cite{MalPet14,MadSar21} and other variants of the MHM method like~\cite{DurDevGomVal18a}, to name a few.

In this work, we follow the strategy used to build the MHM method, which starts at the continuous level placed on a coarse partition. It consists of decomposing the exact solution into local and global contributions that, when discretized, dissociate local and global problems: the global formulation is defined on the skeleton of the coarse partition, yielding the degrees of freedom; the local problems provide the multiscale basis functions and their computation is embarrassingly parallel as the local problems are independent of each other. However, the MHM method for~\eqref{e:elliptic} has a saddle point structure similar to Galerkin's method for the mixed form of \eqref{e:elliptic}. Notably, its lower order version generalizes the Galerkin's method with the element RT$_0$ in simplexes for the case of discretized multiscale coefficient problems with polytopal meshes~\cite{BarJaiParVal20}.

This work revisits the three field method of F.~Brezzi and D.~Marini~\cite{MR1262602} in the context of the MHM method. Specifically, we relax the continuity of the flow variable $\lambda$ and weakly enforce it through the action of a second Lagrange multiplier $\rho$. Then, we decompose the broken infinite dimensional spaces where $u$ and $\lambda$ belong in such a way that the discretization results in a new multiscale finite element method that

\begin{itemize}
\item  induces a definite positive symmetric global linear system to compute the degrees of freedom. It differs from the original MHM method, which induces a linear saddle point system;
\item approximates the flux variable $\lambda$ using a multiscale flux basis obtained from a Dirichlet-to-Neumann operator defined from new local Neumann problems driven by the basis functions of $\rho$. This feature is new when compared to the MHM method strategy where the basis for $\lambda$ is chosen in an ad-hoc fashion and ``ignores'' underlying physical properties; 
\item imposes weak continuity of discrete primal (pressure) and dual (flow) variables on the skeleton of the coarse partition, with a discrete flow that is in local equilibrium with external forces. 
\end{itemize}

In addition to the construction of the new multiscale method, called the Multiscale Hybrid-Hybrid Method (MH$^2$M for short), the main contributions of this work are:

\begin{itemize}
\item[$(i)$] demonstrate the well-posedness and the best approximation results for the MH$^2$M under abstract compatibility conditions between interpolation spaces;
\item[$(ii)$] propose families of interpolation spaces that fulfill the conditions in $(i)$ and for which the optimal convergence of MH$^2$M is demonstrated showing the influence of the different mesh parameters;
\item[$(iii)$] bridge the MH$^2$M with other multiscale finite elements, notably the MHM method and MsFEM~\cite{MR2477579,EfeHouWu00};
\item[$(iv)$] numerical validation and comparison between MsFEM, MHM, and the proposed method, with particular emphasis on the number of global degrees of freedom required to achieve a given error level.
\end{itemize}
This work is organized as follows:
Section~\ref{sec:setting} introduces the functional framework associated with domain partitioning. In Section~\ref{sec:exact}, we characterize the exact solution as the combination of local and global problems.
Section~\ref{sec:discrete} presents the MH$^2$M method and discusses its connection to MsFEM. The mathematical formulation used to establish well-posedness and best-approximation properties is developed in Section~\ref{sec:analysis}.
In Section~\ref{sec:compatible}, we introduce families of interpolation spaces that meet the requirements set in Section~\ref{sec:analysis}, and we prove optimal convergence rates under local regularity assumptions. The algorithmic implementation of the method is detailed in Section~\ref{sec:basis}. Numerical validations and comparisons are presented in Section~\ref{sec:conv}. Finally, conclusions are summarized in Section~\ref{sec:concl}, and additional technical results are provided in the Appendix.
%
%
\section{Settings}
\label{sec:setting}

\subsection{Partition, broken spaces and norms}
\label{sec:partition}

Let  $\triH$ be a regular mesh, which can be based on different element geometries, and $\btriH$ be the skeleton of $\triH$. Without loss of generality, we adopt above and in the remainder of the text, the terminology of three-dimensional domains, denoting for instance the boundaries of the elements by faces. For a given element $\tau\in\triH$, with diameter $H_\tau$, let $\partial\tau$ denote its boundary and $\nn^\tau$ the unit size normal vector that points outward $\tau$. \color{black} We define the mesh size by $H=\max_{\tau\in\triH} H_\tau$, and \color{black}  denote by $\nn$ the outward normal vector on $\dO$. Consider now the following spaces:
\begin{equation}\label{e:spaces}
\begin{gathered}
  \VH:=\{v\in L^2(\OO):\,v|_\tau\in H^1(\tau),\,\tau\in\triH\},
  \qquad
  \Lambda:=\prod_{\tau\in\triH}H^{-1/2}(\partial\tau),
  \\
  H_0^{1/2}(\btriH):=\{v|_{\btriH}:\,v\in H_0^1(\OO)\}. 
\end{gathered}
\end{equation}
For $w$, $v\in L^2(\OO)$ and $\rho\in H_0^{1/2}(\btriH)$, $\mu\in\Lambda$ define 
\begin{gather*}
  (w,v)_{\triH}:=\sum_{\tau\in\triH}\int_\tau wv\,d\xx
  \quad\text{and}\quad
  \langle\mu,\rho\rangle_{\btriH}:=\sum_{\tau\in\triH}\langle\mu_\tau,\rho\rangle_{\partial \tau},
\end{gather*}
where $\langle\cdot,\cdot\rangle_{\partial \tau}$ is the dual product involving $H^{-1/2}(\dtau)$ and $H^{1/2}(\dtau)$, defined by
\[
\langle\mu,\rho\rangle_{\partial \tau}:=\int_\tau\div\bsigma v\,d\xx+\int_\tau\bsigma\cdot\bgrad v \,d\xx
\]
for all $\bsigma\in\Hdivtau$ such that $\bsigma\cdot\nn^\tau=\mu$, and all $v\in H^1(\tau)$ such that $v|_\dtau=\rho$. Note that the null space of the operator $\langle\mu,\cdot\rangle_{\btriH}$ is $H^{1/2}_0(\btriH)$, if $\mu$ is skeleton trace of the normal component of some function in $\Hdiv$. We use the same notation for a function in $H_0^{1/2}(\btriH)$ and its restriction to an element boundary $\dtau$. 

Consider also the following (semi-) norms: 
\begin{equation}\label{e:normsdef}
\begin{gathered}
|v|_{H_\A^1(\tau)}:=\|\A^{1/2}\bgrad v\|_{L^2(\tau)}, 
\qquad
|v|_{H_\A^1(\Omega)}^2:=\sum_{\tau\in\triH}|v|_{H_\A^1(\tau)}^2, 
\\
|\xi|_{H^{1/2}(\dtau)}
:=\inf_{\substack{\phi\in H^1(\tau)\\\phi|_\dtau=\xi}}|\phi|_{H_\A^1(\tau)},
\qquad
\|\rho\|_{H^{1/2}(\btriH)}
:=\inf_{\substack{\phi\in H_0^1(\OO)\\\phi|_\btriH=\rho}}|\phi|_{H_\A^1(\OO)},
\end{gathered}
\end{equation}
where $v\in H^1(\triH)$, $\xi\in H^{1/2}(\dtau)$ and $\rho\in H_0^{1/2}(\btriH)$. Also, for $\mu\in\Lambda$ let
\begin{equation}
\label{norm12d}
|\mu|_{H^{-1/2}(\dtau)}
:=\sup_{\tphi\in\tH^{1/2}(\dtau)}\frac{\langle\mu,\tphi\rangle_\dtau}{|\tphi|_{H^{1/2}(\dtau)}},
\qquad
|\mu|_\Lambda^2:=\sum_{\tau\in\triH}|\mu|_{H^{-1/2}(\dtau)}^2,
\end{equation}
where 
\begin{equation}\label{e:tHdef}
\tH^{1/2}(\dtau):=\biggl\{\xi\in H^{1/2}(\dtau):\,\int_\dtau\xi\,d\xx=0\biggr\}. 
\end{equation}

%
%
\subsection{Hybridization}
\label{hybrid}

Let $u\in\VH$, $\rho\in H_0^{1/2}(\btriH)$, and $\lambda\in\Lambda$ be such that 
\begin{equation}\label{e:weak-hybridH}
\begin{alignedat}{5}
&\quad(\A\bgrad u,\bgrad v)_{\triH}&&-\langle\lambda,v\rangle_{\btriH}&&&&=(f,v)_{\triH}\quad&&\text{for all }v\in\VH,
  \\
&-\langle\mu,u\rangle_{\btriH}&&&&+\langle\mu,\rho\rangle_{\btriH}&&=0\quad&&\text{for all }\mu\in\Lambda,
  \\
&&&\quad\langle\lambda,\xi\rangle_{\btriH}&&&&=0\quad&&\text{for all }\xi\in H_0^{1/2}(\btriH).
\end{alignedat}
\end{equation}
This is the formulation proposed in~\cite{MR1262602,MR1826573}, and yields a hybrid formulation for~\eqref{e:elliptic}. \color{black} Indeed, $u$, the weak solution of~\eqref{e:elliptic}, $\rho=u|_{\btriH}$ and $\lambda=\A\bgrad u\cdot\nn^\tau|_{\btriH}$ solve~\eqref{e:weak-hybridH} \color{black}. Classical results on Sobolev spaces~\cite{BofBreFor13,gatica14} yield the converse statement. If the triplet $(u,\rho,\lambda)$ solves~\eqref{e:weak-hybridH}, then, from the third equation, $\lambda=\bsigma\cdot\nn^\tau$ for some $\bsigma\in\Hdiv$. We gather from the first equation that $-\div\A\bgrad u=f$ weakly in each element, and that $\lambda=\A\bgrad u\cdot\nn^\tau|_{\btriH}$. Finally, it follows from the second equation that $\rho=u|_{\btriH}$. Then, $u\in H_0^1(\Omega)$ and~\eqref{e:elliptic} holds in the weak sense. It follows from the above arguments that existence and uniqueness for solutions of~\eqref{e:weak-hybridH} is immediate.

\section{Exact solution decomposition}
\label{sec:exact}
Consider the decomposition 
\begin{equation*}\label{e:VHdecomp}
\VH=\VVRM\oplus\tVV,
\end{equation*}
where $\VVRM$ is the space of piecewise constants, and $\tVV$ is the space of functions with zero average within each element border $\dtau$, for $\tau\in\triH$. Note that this decomposition differs from that of~\cite{AraHarParVal13a,FETI,MR2282408,HarMadVal16,HarParVal13}.

Taking a further step, we decompose  $\Lambda$ into a space of ``constants'' plus ``zero-average'' functionals over the border of the elements of $\triH$. For each $\tau_i\in\triH$, let $\lambdarm_i\in\Lambda$ be such that 
\begin{equation}\label{e:lrmdef}
\langle\lambdarm_i,v\rangle_{\btriH}:=\int_{\partial \tau_i}v\,d\xx\quad\text{for all }v\in H^1(\triH). 
\end{equation}
Let $N$ be the number of elements of $\triH$ and 
\begin{gather*}\label{e:lammbdadecomp}
\Lambda^0 :=\span\{\lambdarm_i:\,i=1,\dots,N\},
\qquad 
\tH^{-1/2}(\dtau):=\{\tmu\in H^{-1/2}(\dtau):\,\langle\tmu,1\rangle_{\dtau}=0\},   
\\
\tL:=\prod_{\tau\in\triH}\tH^{-1/2}(\dtau)=\{\tmu\in\Lambda:\,\langle\tmu,\vvrm\rangle_{\btriH}=0\text{ for all }\vvrm\in\VVRM\}. 
\end{gather*}

We can now decompose $\Lambda=\Lambda^0\oplus\tL$ as follows~\cite{MR2759829}. Given $\mu\in\Lambda$, let $\murm\in\Lambda^0$
\begin{equation*}
\langle\murm,\vvrm\rangle_{\btriH}=\langle\mu,\vvrm\rangle_{\btriH}, 
\end{equation*}
for all $\vvrm\in\VVRM$. Observe that $\murm$ is well-defined and $\murm|_{\dtau} = \frac{1}{|\dtau|} \langle\mu,1\rangle_{\dtau}$ for all $\tau\in \triH$. Next, we define uniquely $\tmu := \mu-\murm$, i.e., 
\begin{equation*}
\langle\tmu,v\rangle_{\btriH}=\langle\mu,v\rangle_{\btriH}-\langle\murm,v\rangle_{\btriH}, 
\end{equation*}
and note that $\tmu\in\tL$ since $\langle\tmu,\vvrm\rangle_{\btriH}=\langle\mu,\vvrm\rangle_{\btriH}-\langle\murm,\vvrm\rangle_{\btriH}=0$. 

We then write $u=\uurm+\widetilde u$, where $\uurm\in\VVRM$ and $\widetilde u\in\tVV$, and also $\lambda=\lambdarm+\tlambda$, for $\lambdarm\in\Lambda^0$ and $\tlambda\in\tL$. Owing to such decomposition, we can characterize each component of $u$ and $\lambda$ from data and Lagrange multipliers using \eqref{e:weak-hybridH}. To see that, we first test~\eqref{e:weak-hybridH} with $(v^0,\mu^0,0)$, and gather  that $\lambdarm\in\Lambda^0$ and $u^0\in\VVRM$ solve 
\begin{equation}\label{e:weak-e1.easy}
\begin{aligned}
\langle\lambdarm,v^0\rangle_{\btriH}&=-\langle f,v^0\rangle_{\triH}\quad\text{for all }v^0\in\VVRM,
  \\
\langle\mu^0,u^0\rangle_{\btriH}&=\langle\mu^0,\rho)_{\btriH}\quad\text{for all }\mu^0\in\Lambda^0.
\end{aligned}
\end{equation}
Then, the first equation of~\eqref{e:weak-e1.easy} defines $\lambdarm$. The piecewise constant $u^0$ is obtained from the second equation of~\eqref{e:weak-e1.easy}, after the computation of $\rho$.  Both can be computed locally as follow
\begin{gather}
\label{u0l0}
\lambdarm|_\dtau = -\frac{1}{|\dtau|} \int_{\tau}  f \:d\xx\quad\text{and}\quad u^0|_\tau = \frac{1}{|\dtau|} \int_{\dtau}  \rho \:d\xx\quad\text{for all }\tau \in\triH.
\end{gather}

Next, testing~\eqref{e:weak-hybridH} with  $(\widetilde v,\tmu,\xi)$,  we gather that $\widetilde u\in\tVV$, $\tlambda\in\tL$ and $\rho\in H_0^{1/2}(\btriH)$ solve
\begin{equation}\label{e:weak-e1}
\begin{alignedat}{5}
&(\A\bgrad\widetilde u,\bgrad\widetilde v)_{\triH}&&-\langle\tlambda,\widetilde v\rangle_{\btriH}&&&&
=(f,\widetilde v)_{\btriH}
\quad&&\text{for all }\widetilde v\in\tVV,
\\
&-\langle\tmu,\widetilde u\rangle_{\btriH}&&&&+\langle\tmu,\rho\rangle_{\btriH}&&
=0
\quad&&\text{for all }\tmu\in\tL, 
\\
&&&\langle\tlambda,\xi\rangle_{\btriH}&&&&
=-\langle\lambda^0,\xi\rangle_{\btriH}
\quad&&\text{for all }\xi\in H_0^{1/2}(\btriH). 
\end{alignedat}
\end{equation}
The first equation of~\eqref{e:weak-e1} allows the introduction of local mappings. Let $T:\tL\to\tVV$ and $\widetilde T:L^2(\Omega)\to\tVV$ be such that, given $\mu\in\tL$, $q\in L^2(\Omega)$ and $\tau\in\triH$, 
\begin{equation}\label{e:definitionT}
\int_\tau\A\bgrad(T\mu)\cdot\bgrad\widetilde v\,d\xx=\langle\mu,\widetilde v\rangle_{\partial \tau}
\quad\text{and}\quad
\int_\tau\A\bgrad(\widetilde Tq)\cdot\bgrad\widetilde v\,d\xx=\int_{\tau}q\widetilde v\:d\xx,
\end{equation}
for all $\widetilde v\in\tVV$. The mappings $T$ and $\widetilde T$ are locally well defined as the bilinear forms in \eqref{e:definitionT} are coercive on $\widetilde H^1(\tau)$. They are locally bounded as follows
\begin{gather}\label{boundT}
|T\tmu|_{H_\A^1(\tau)}=|\tmu|_{H^{-1/2}(\dtau)}\quad\text{and}\quad |\widetilde T q|_{H_\A^1(\tau)}\leq \frac{C_{\mathcal P}}{a_{min}^{1/2}} H_\tau\|q\|_{L^2(\tau)},
\end{gather}
for all $\tmu\in\tL$ and $q \in L^2(\Omega)$, where $C_{\mathcal P}$ is the positive local Poincar\'e  constant independent of $H_\tau$.
The boundedness of $T$ in \eqref{boundT} is a consequence of  Lemma~\ref{l:ids} (see the Appendix). The second estimate in \eqref{boundT} follows from the definition of $\widetilde T$ in \eqref{e:definitionT}, the Cauchy-Schwartz and local Poincar\'e inequalities.

 Then $\widetilde u=T\tlambda+\tilde Tf$, and the remaining equations in~\eqref{e:weak-e1} yield 
\begin{equation}\label{e:weak-e2}
\begin{alignedat}{3}
  -&\langle\tmu,T\tlambda\rangle_{\btriH}+\langle\tmu,\rho\rangle_{\btriH}&&
  =\langle\tmu,\widetilde Tf\rangle_{\btriH}\quad&&\text{for all }\tmu\in\tL
  \\
  &\langle\tlambda,\xi\rangle_{\btriH}&&
  =-\langle\lambda^0,\xi\rangle_{\btriH}\quad&&\text{for all }\xi\in H_0^{1/2}(\btriH).
\end{alignedat}
\end{equation}
It follows from~\eqref{boundT} that the bilinear form $\langle\cdot,T\cdot\rangle_{\btriH}$ is coercive on $\tL$ locally. Then, the first equation \eqref{e:weak-e2} also induces a local mapping. We denote it  by
\[
G:\{v\in L^2(\btriH):\,v\in H^{1/2}(\dtau),\,\tau\in\triH\}\to\tL
\]
and is such that,  given $\phi\in H^{1/2}(\dtau)$, we set $\tlambda_\phi:=G\phi\in\tH^{-1/2}(\dtau)$ as the (unique) solution of 
\begin{equation}\label{e:Gdef}
\int_\tau\A\bgrad(T\tlambda_\phi)\cdot\bgrad T\tmu\,d\xx
=\langle\tmu,T\tlambda_\phi\rangle_\dtau
=\langle\tmu,\phi\rangle_\dtau\quad\text{for all }\tmu\in\tL. 
\end{equation}
The operator $G$ is bounded locally as follow (see Lemma~\ref{l:ids} in the Appendix): 
\begin{equation}
\label{boundG}
|G\xi|_{H^{-1/2}(\dtau)}=|\xi|_{H^{1/2}(\dtau)}\quad\text{for all }\xi\in H^{1/2}(\dtau).
\end{equation}


Next, the first equation in~\eqref{e:weak-e2} implies that $\tlambda=G(\rho-\widetilde Tf)$, and then, the last equation of~\eqref{e:weak-e2} reads
\begin{equation}\label{e:rhodef}
\langle G\rho,\xi\rangle_\btriH
=-\langle\lambda^0,\xi\rangle_\btriH+\langle G\widetilde Tf,\xi\rangle_{\btriH}
\quad\text{for all }\xi\in H_0^{1/2}(\btriH).
\end{equation}

Define the bilinear forms $g_\tau:H^{1/2}(\dtau)\times H^{1/2}(\dtau)\to\RR$ for $\tau\in\triH$, and $g:H_0^{1/2}(\btriH)\times H_0^{1/2}(\btriH)\to\RR$ by
\begin{equation}\label{e:gdef}
g_\tau(\xi,\phi):=\langle G\xi,\phi\rangle_\dtau,
\qquad
g(\xi,\phi):=\sum_{\tau\in\triH}g_\tau(\xi,\phi)
\quad\text{for $\xi$, $\phi\in H_0^{1/2}(\btriH)$}. 
\end{equation}

\begin{remark}[$TG$ local mapping] 
Note that both $T$ and $G$ are local operators.  
Furthermore, we find $\lambda^0$ and $u^0$ locally and trivially from \eqref{u0l0}, as there are a finite number of unknowns. Thus, ~\eqref{e:rhodef} is the only global, infinite-dimensional equation depending on $\A$ through the operator $G$. For future reference it follows from ~\eqref{e:Gdef} that
\begin{equation}\label{e:tHident}
(TG\xi)|_\dtau=\xi+c_\xi, \quad\text{where }c_\xi=-\frac1{|\dtau|}\int_\dtau\xi\,d\xx
\end{equation}
for all $\xi\in H^{1/2}(\dtau)$ and $\tau\in\triH$.
\end{remark}

\begin{remark}[$TG$ equivalent formulation]
\label{rem:tge}
Note that $G$ is symmetric since, from~\eqref{e:Gdef}, 
\begin{equation}\label{e:Gsymm}
\sum_{\tau\in\triH}\int_\tau\A\bgrad TG\rho\cdot\bgrad TG\xi
=\langle G\rho,TG\xi\rangle_\btriH
=\langle G\rho,\xi\rangle_\btriH,
\end{equation}
for all $\rho,\xi\in H_0^{1/2}(\btriH)$. Also, from~\eqref{e:Gdef} and~\eqref{e:definitionT}, 
\[
\langle G\widetilde Tf,\xi\rangle_{\btriH}
=\langle G\xi,\widetilde Tf\rangle_{\btriH}
=\sum_{\tau\in\triH}\int_\tau\A\bgrad\widetilde Tf\cdot\bgrad TG\xi
=\sum_{\tau\in\triH}\int_\tau fTG\xi. 
\]
Thus, using~\eqref{e:tHident} and~\eqref{u0l0} it holds that 
\begin{multline*}
-\langle\lambda^0,\xi\rangle_\btriH+\langle G\widetilde Tf,\xi\rangle_{\btriH}
=-\sum_{\tau\in\triH}\int_\dtau\lambda^0\xi+\sum_{\tau\in\triH}\int_\tau fTG\xi
\\
=\sum_{\tau\in\triH}\int_\dtau\lambda^0c_\xi+\sum_{\tau\in\triH}\int_\tau fTG\xi
=-\sum_{\tau\in\triH}\int_\tau fc_\xi+\sum_{\tau\in\triH}\int_\tau fTG\xi
=\sum_{\tau\in\triH}\int_\tau f(TG\xi-c_\xi),
\end{multline*}
and formulation \eqref{e:rhodef} is equivalent to 
\begin{gather}\label{e:rhodefa}
 \sum_{\tau\in\triH}\int_\tau \A\nabla TG\rho\cdot\nabla TG\xi
=\sum_{\tau\in\triH}\int_\tau f(TG\xi-c_\xi)
\quad\text{for all }\xi\in H_0^{1/2}(\btriH).
\end{gather}
\end{remark}

In the next theorem, we show that $g_\tau(\cdot,\cdot)$ is coercive and continuous, so \eqref{e:rhodef} is well-posed. Furthermore, we collect the results of the constructive approach described above and provide an explicit characterization of $u$ and $\lambda$ as the solutions to local problems that are brought together via $\rho$, the solution of the global skeletal problem \eqref{e:rhodef}.
  
  \begin{theorem}\label{p:coerc}
Let $g_\tau$, $g$ be defined by~\eqref{e:gdef}. The following coercivity and continuity results hold: 
\begin{equation*}
\begin{aligned}
g_\tau(\txi,\txi)
&=|\txi|_{H^{1/2}(\dtau)}^2\quad\text{for all }\txi\in\tH^{1/2}(\dtau),
\\
g(\xi,\xi)&=|\xi|_{H^{1/2}(\btriH)}^2\quad\text{for all }\xi\in H_0^{1/2}(\btriH),
\\
g(\xi,\rho)&\le|\xi|_{H^{1/2}(\btriH)}|\rho|_{H^{1/2}(\btriH)} \quad\text{for all }\xi,\, \rho\in H_0^{1/2}(\btriH).
\end{aligned}
\end{equation*}
Hence, there exists a unique $\rho\in H_0^{1/2}(\btriH)$ such that
\begin{equation}\label{e:mhhm}
g(\rho,\xi)=-\langle\lambda^0,\xi\rangle_\btriH+g(\widetilde Tf,\xi)
\quad\text{for all }\xi\in H_0^{1/2}(\btriH)
\end{equation}
and 
\[ 
|\rho|_{H^{1/2}(\btriH)} \leq |\lambda^0|_{\Lambda} + |\widetilde T f |_{H^{1/2}(\btriH)}.
\]
Moreover, the exact solution $u$ and $\lambda$ of the hybrid formulation~\eqref{e:weak-hybridH} writes
\begin{gather}
\label{ul-dec}
u= u^0+TG\rho+(I-TG)\widetilde Tf\quad \text{and}\quad \lambda=\lambdarm+G\rho-G\widetilde Tf,
\end{gather}
where $u^0$ and $\lambda^0$ are the piecewise constant functions defined through \eqref{u0l0}.
\end{theorem}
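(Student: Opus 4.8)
The plan is to establish the three (semi-)norm identities first, because once $g$ is shown to be coercive and continuous the solvability of \eqref{e:mhhm} reduces to the Lax--Milgram lemma --- in fact to Riesz representation, since the identities reveal that $g$ is exactly the inner product inducing $|\cdot|_{H^{1/2}(\btriH)}$ --- and the representation formulas \eqref{ul-dec} then follow by merely assembling the relations already derived in Section~\ref{sec:exact}.

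For the local identity I would start from $g_\tau(\txi,\txi)=\langle G\txi,\txi\rangle_\dtau$ in \eqref{e:gdef} and test the defining relation \eqref{e:Gdef} with $\tmu=G\txi$ and $\phi=\txi$; this yields $g_\tau(\txi,\txi)=\int_\tau\A\bgrad(TG\txi)\cdot\bgrad(TG\txi)\,d\xx=|TG\txi|_{H_\A^1(\tau)}^2$. Chaining the two norm-preserving identities \eqref{boundT} (namely $|T\tmu|_{H_\A^1(\tau)}=|\tmu|_{H^{-1/2}(\dtau)}$) and \eqref{boundG} then gives $g_\tau(\txi,\txi)=|G\txi|_{H^{-1/2}(\dtau)}^2=|\txi|_{H^{1/2}(\dtau)}^2$. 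I would further note that $G$ annihilates additive constants, since its right-hand side in \eqref{e:Gdef} pairs only against $\tmu\in\tH^{-1/2}(\dtau)$, whereas the seminorm $|\cdot|_{H^{1/2}(\dtau)}$ of \eqref{e:normsdef} is invariant under adding constants to its argument; hence $g_\tau(\xi,\xi)=|\xi|_{H^{1/2}(\dtau)}^2$ for every $\xi\in H^{1/2}(\dtau)$, not merely the zero-average ones, which is what the global step requires.

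The global identity $g(\xi,\xi)=\sum_{\tau\in\triH}|\xi|_{H^{1/2}(\dtau)}^2=|\xi|_{H^{1/2}(\btriH)}^2$ is where the real work lies. The inequality ``$\ge$'' is immediate: restricting any admissible global extension $\phi\in H_0^1(\OO)$ of $\xi$ to each $\tau$ gives a competitor for the local infimum in \eqref{e:normsdef}, so $|\phi|_{H_\A^1(\OO)}^2=\sum_{\tau\in\triH}|\phi|_{H_\A^1(\tau)}^2\ge\sum_{\tau\in\triH}|\xi|_{H^{1/2}(\dtau)}^2$, and the infimum over $\phi$ finishes it. The reverse inequality is the main obstacle: I would take the local $\A$-harmonic extensions $\phi_\tau^*$ realizing each local infimum and observe that on a face shared by two elements the two extensions carry the same Dirichlet datum $\xi$ and therefore have matching traces, so the piecewise-defined function glues to a single $\phi^*\in H^1(\OO)$; because $\xi\in H_0^{1/2}(\btriH)$ vanishes on $\dO$ we get $\phi^*\in H_0^1(\OO)$, which is admissible in the global infimum and yields $|\xi|_{H^{1/2}(\btriH)}^2\le\sum_{\tau\in\triH}|\phi_\tau^*|_{H_\A^1(\tau)}^2=\sum_{\tau\in\triH}|\xi|_{H^{1/2}(\dtau)}^2$. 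For the continuity bound I would combine the symmetry of $g$ from Remark~\ref{rem:tge} with the nonnegativity $g(\xi,\xi)=|\xi|_{H^{1/2}(\btriH)}^2\ge0$: a symmetric nonnegative bilinear form obeys Cauchy--Schwarz, which is precisely $g(\xi,\rho)\le|\xi|_{H^{1/2}(\btriH)}|\rho|_{H^{1/2}(\btriH)}$.

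With these three facts $g$ is continuous and coercive, both with constant $1$, on the Hilbert space $\bigl(H_0^{1/2}(\btriH),|\cdot|_{H^{1/2}(\btriH)}\bigr)$, and the right-hand side $\xi\mapsto-\langle\lambda^0,\xi\rangle_\btriH+g(\widetilde Tf,\xi)$ is a bounded linear functional, so the Lax--Milgram lemma delivers the unique $\rho$ solving \eqref{e:mhhm}. For the stability estimate I would test \eqref{e:mhhm} with $\xi=\rho$, use coercivity on the left to obtain $|\rho|_{H^{1/2}(\btriH)}^2$, and bound the two terms on the right by the duality pairing $\langle\lambda^0,\rho\rangle_\btriH\le|\lambda^0|_\Lambda|\rho|_{H^{1/2}(\btriH)}$ (from \eqref{norm12d}) and by Cauchy--Schwarz for $g$, respectively; dividing by $|\rho|_{H^{1/2}(\btriH)}$ gives $|\rho|_{H^{1/2}(\btriH)}\le|\lambda^0|_\Lambda+|\widetilde Tf|_{H^{1/2}(\btriH)}$. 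Finally the representation formulas \eqref{ul-dec} need no new ingredient: substituting $\tlambda=G\rho-G\widetilde Tf$ from the first equation of \eqref{e:weak-e2} into $\widetilde u=T\tlambda+\widetilde Tf$ yields $\widetilde u=TG\rho+(I-TG)\widetilde Tf$, and adding the piecewise-constant $u^0$ of \eqref{u0l0} produces the first identity, while $\lambda=\lambda^0+\tlambda$ with the same expression for $\tlambda$ produces the second.
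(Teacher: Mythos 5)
Your proposal is correct and follows essentially the same route as the paper's proof: the local identity $g_\tau(\txi,\txi)=|\txi|_{H^{1/2}(\dtau)}^2$ via the isometry properties of $T$ and $G$ (the paper cites Lemma~\ref{l:ids} and~\eqref{e:tHident}, you cite~\eqref{boundT} and~\eqref{boundG}, which are the same facts), the global identity via gluing the local $\A$-harmonic extensions into an $H_0^1(\OO)$ function — which is precisely the content of the paper's Lemma~\ref{l:halfnorm}, re-proved inline in your argument — and then Lax--Milgram, the stability bound by testing with $\rho$, and the substitution $\tlambda=G(\rho-\widetilde Tf)$, $\widetilde u=T\tlambda+\widetilde Tf$ for~\eqref{ul-dec}, exactly as the paper does. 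The one cosmetic difference is your continuity argument, which deduces Cauchy--Schwarz abstractly from the symmetry (Remark~\ref{rem:tge}) and positive semidefiniteness of $g$, whereas the paper applies Cauchy--Schwarz elementwise to $\int_\tau\A\bgrad T\tlambda_\xi\cdot\bgrad T\tlambda_\rho\,d\xx$ and then sums; both are equally valid and of the same length.
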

\begin{proof}
For $\txi\in\tH^{1/2}(\dtau)$ let $\tlambda_\txi :=G\txi$. The local coercivity holds since, from~\eqref{e:tHident} and Lemma~\ref{l:ids}-(i) (see the Appendix),
\[
|\txi|_{H^{1/2}(\dtau)}^2
=|T\tlambda_\txi|_{H_\A^1(\tau)}^2
=\langle\tlambda_\txi,\txi\rangle_\dtau
=g_\tau(\txi,\txi). 
\]
Next, assume that $\xi\in H_0^{1/2}(\btriH)$. Then $\xi=\txi+\xi^0$, where $\txi|_\dtau\in\tH^{1/2}(\dtau)$ and $\xi^0|_\dtau$ is constant for each $\tau\in\triH$. Then, from  the definition of mapping $G$ in~\eqref{e:Gdef}, and from the fact $|\cdot|_{H^{1/2}(\dtau)}$ is a semi-norm, we get
\[
g(\xi,\xi)
=\sum_{\tau\in\triH}g_\tau(\xi,\xi)
=\sum_{\tau\in\triH}g_\tau(\txi,\txi)
=\sum_{\tau\in\triH}|\txi|_{H^{1/2}(\dtau)}^2
=\sum_{\tau\in\triH}|\xi|_{H^{1/2}(\dtau)}^2
=|\phi_\xi|_{H_\A^1(\triH)}^2
\]
where we used Lemma~\ref{l:halfnorm} (see the Appendix) in the last step.

To show continuity, let $\xi$, $\rho\in H_0^{1/2}(\btriH)$ and set $\tlambda_\phi:=G\xi$, $\tlambda_\rho:=G\rho$. Then
\begin{multline*}
g(\xi,\rho)
=\sum_{\tau\in\triH}\int_\tau\A\bgrad(T\tlambda_\xi)\cdot\bgrad T\tlambda_\rho\,d\xx
\le\sum_{\tau\in\triH}|T\tlambda_\xi|_{H_\A^1(\tau)}|T\tlambda_\rho|_{H_\A^1(\tau)}
\\
=\sum_{\tau\in\triH}|T\tlambda_\xi|_{H^{1/2}(\dtau)}|T\tlambda_\rho|_{H^{1/2}(\dtau)}
=\sum_{\tau\in\triH}|\xi|_{H^{1/2}(\dtau)}|\rho|_{H^{1/2}(\dtau)}, 
\end{multline*}
where we used Lemma~\ref{l:halfnorm} (see the Appendix), identity~\eqref{e:tHident} and that $|\cdot|_{H^{1/2}(\dtau)}$ is a semi-norm. Existence and uniqueness of solution for~\eqref{e:mhhm} follows from Lax-Milgram's Lemma. The stability result for $\rho$  follows from the coercivity and continuity of $g(\cdot,\cdot)$ and the definition of the  $\Lambda$-norm in \eqref{norm12d}. Finally, the characterization~\eqref{ul-dec} is a straightforward consequence of the decomposition $u = u^0 + \widetilde u$ and $\lambda = \lambda^0 +\tlambda$, using 
\begin{gather}
\label{tlam}
\tlambda=G(\rho-\widetilde Tf) \quad \text{and}\quad \widetilde u = T \tlambda + \widetilde T f = T G(\rho-\widetilde Tf) + \widetilde T f.
\end{gather}
\end{proof}

The structure of exact solutions $u$ and $\lambda$ in Theorem~\ref{p:coerc} guides the discretization choices and gives rise to the MH$^2$M as a result of the discrete version of the skeletal variational problem~\eqref{e:mhhm}. This is addressed next.
\section{Discretization}
\label{sec:discrete}

\subsection{The method}
\label{ssec:method}

Consider the finite dimensional spaces
\[\Gamma_{H_\Gamma}\subset H_0^{1/2}(\btriH),\quad  \Lambda_{H_\Lambda}\subset\Lambda\quad \text{and}\quad  V_h\subset H^1(\triH),
\]
and
\[
 \tL_{H_{\Lambda}}:=\Lambda_{H_{\Lambda}}\cap\tL\quad \text{and}\quad \tVh:=V_h\cap\tVV,
\]
and denote by $\Gamma_{H_\Gamma}(\dtau)$, $\Lambda_{H_\Lambda}(\dtau)$, $\tL_{H_\Lambda}(\dtau)$,  $V_h(\tau)$ and  $\tVh(\tau)$ their restriction to $\tau\in\triH$. 

For $\tmu\in\tL$ and $q\in L^2(\Omega)$, the discrete versions of mapping $T$ and $\widetilde T$, namely  $T_h:\tL\to\tVh$ and $\widetilde T_h:L^2(\Omega)\to\tVh$, are 
\begin{equation}\label{e:Thdef}
\int_\tau\A\bgrad(T_h\tmu)\cdot\bgrad\widetilde v_h\,d\xx
=\langle\tmu,\widetilde v_h\rangle_{\dtau} \quad\text{and}\quad \int_\tau\A\bgrad(\widetilde T_hq)\cdot\bgrad\widetilde v_h\,d\xx=\int_{\tau}q\widetilde v_h\:d\xx,
\end{equation}
for all $\widetilde v_h\in\tVh$. 
Also, let $G_h:H_0^{1/2}(\btriH)\to\tL_{H_{\Lambda}}$ be the discrete operator related to $G$ (cf.~\eqref{e:Gdef}). For $\phi\in H_0^{1/2}(\btriH)$, define $\tlambda_\phi=G_h\phi$ such that 
\begin{equation}\label{e:Ghdef}
\int_\tau\A\bgrad(T_h\tlambda_\phi)\cdot\bgrad T_h\tmu_{H_{\Lambda}}\,d\xx
=\langle\tmu_{H_{\Lambda}},T_h\tlambda_\phi\rangle_\dtau
=\langle\tmu_{H_{\Lambda}},\phi\rangle_\dtau\quad\text{for all }\tmu_{H_{\Lambda}}\in\tL_{H_{\Lambda}}. 
\end{equation}
Note that the same arguments used for $G$ yield that $G_h$ is also symmetric.

\begin{remark}[Discrete local mappings]
The operators $T_h$ and $\widetilde T_h$ are well defined and  bounded. Also, $\widetilde T_h$ is bounded as $\widetilde T$ given in~\eqref{boundT}, and 
\begin{equation}
\label{bTh}
| T_h \tmu |_{H^1_\A(\tau)} \leq | \tmu |_{H^{-1/2}(\dtau)}\quad\text{for all }\tmu\in \tL\text{ and }\tau \in\triH,
\end{equation}
from \eqref{e:Thdef} and the norm definitions. On the other hand, note from~\eqref{e:Ghdef} that $G_h$ is a well-defined mapping only if $T_h$ is injective, which does not necessarily hold unless there is some kind of compatibility between spaces $\tL_{H_{\Lambda}}$ and $ \tVh$.
The details of such compatibility condition is discussed in Section~\ref{ssec:approx}.
\end{remark}


Based on Theorem~\ref{p:coerc}, we define the MH$^2$M such that $\rho_{H_\Gamma}\in\Gamma_{H_\Gamma}$ solves
\begin{equation}\label{e:mhhmh}
\langle G_h\rho_{H_{\Gamma}},\xi\rangle_\btriH =-\langle\lambda^0,\xi\rangle_\btriH +\langle G_h\widetilde T_h f,\xi\rangle_\btriH
\quad\text{for all }\xi\in \Gamma_{H_{\Gamma}},
\end{equation}
where $\lambda^0$ is given in~\eqref{u0l0}. Then, the exact solution $u$ and $\lambda$ are approximate by their discrete counterparts $u_h \in V_h$ and $\lambda_{H_\Lambda}\in\Lambda_{H_\Lambda}$, where 
\begin{equation}
\label{disc-sol}
u_h :=u_h^0+T_hG_h\rho_{H_{\Gamma}}+(I-T_hG_h)\widetilde T_h f \quad\text{and}\quad \lambda_{H_{\Lambda}} := \lambda^0+G_h(\rho_{H_{\Gamma}}-\widetilde T_h f),
\end{equation}
where $u_h^0 \in \VVRM$  is the approximate counterpart of $u_0$ in \eqref{u0l0}, i.e., 
\begin{equation}
\label{u0h}
u^0_h|_\tau = \frac{1}{|\dtau|} \int_{\dtau}  \rho_{H_{\Gamma}} \:d\xx\quad\text{for all }\tau \in\triH.
\end{equation}

\begin{remark}[Conformity of $u_h$]
Note that, in general, the MH$^2$M is a nonconforming method in $H^1(\OO)$. It becomes conforming if, for instance, we choose the (impractical) space $\tL_{H_\Lambda} = \tL$ since, in this case,
\[
u_h |_{\dtau} = {u_h^0}|_{\dtau} + {T_hG\rho_{H_{\Gamma}}}|_{\dtau}+(I-T_hG)\widetilde T_h f |_ {\dtau} = \rho_h\quad\text{for all }\tau\in \triH ,
\]
from~\eqref{e:Ghdef} with $T_h$ replacing $T$, and~\eqref{u0h}.
\end{remark}

\begin{remark}[$T_h G_h$ equivalent formulation]

From \eqref{e:Ghdef}, the relationship \eqref{e:tHident} is valid only in a weaker sense when $G_h$ replaces $G$. However, following the arguments in Remark~\ref{rem:tge} and using~\eqref{e:Thdef} and~\eqref{e:Ghdef}, we note that the M$H^2$M~\eqref{e:mhhm} is equivalent to
\begin{gather}
\label{e:rhodefah}
 \sum_{\tau\in\triH}\int_\tau \A\nabla T_hG_h\rho\cdot\nabla T_hG_h\xi
=\sum_{\tau\in\triH}\int_\tau f(T_hG_h\xi-c_\xi)
\quad\text{for all }\xi\in H_0^{1/2}(\btriH),
\end{gather} 
where $c_\xi$ is the constant given in~\eqref{e:tHident}.
\end{remark}

Before heading to the error analysis, we establish a relationship between the MH$^2$M \eqref{e:mhhm} and the MsFEM~\cite{HouWuCai99}.
%
%
\subsection{Bridging the MH$^2$M  and MsFEM}
\label{ssec:equival}

Note that equation~\eqref{e:mhhm} that defines our method has some sort of relation with the definition of the MsFEM. In fact, we show \color{black} below \color{black}  that ~\eqref{e:mhhm} yields the \emph{same trace} as the MsFEM in \emph{some} particular cases. We first consider a ``continuous version'' of the MsFEM, seeking $\rho\in H_0^{1/2}(\btriH)$ such that 
\begin{equation}\label{e:cMsFEM}
\int_\Omega\A\bgrad\E(\rho)\cdot\bgrad\E(\xi)=\int_\Omega f\E(\xi)
\quad\text{for all }\xi\in H_0^{1/2}(\btriH), 
\end{equation}
where we denote the $\A$-harmonic extension $\E:H_0^{1/2}(\btriH)\to H_0^1(\Omega)$ is such that, for $\xi\in H_0^{1/2}(\btriH)$, the extension $\E(\xi)=\xi$ on $\btriH$ and weakly solves $\div\A\bgrad\E(\xi)=0$ in each element $\tau\in\triH$. 

Note now that, for all $\tau\in\triH$ and $\xi\in H^{1/2}(\dtau)$,
\begin{equation}\label{e:identity1}
\E(\xi)=TG\xi-c_\xi,
\end{equation}
where $c_\xi$ is as in~\eqref{e:tHident}. That the identity above holds on $\dtau$ follows immediately from~\eqref{e:tHident}. Next, since $G\xi\in\tL$ then $TG\xi-c_\xi$ is $\A$-harmonic, and from uniqueness of the harmonic extension, the identity~\eqref{e:identity1} holds. As a result, we get  that solutions $\rho$  of formulations~\eqref{e:rhodefa} and~\eqref{e:cMsFEM} coincide since from~\eqref{e:identity1} and~\eqref{e:cMsFEM}
\[
\sum_{\tau\in\triH}\int_\tau\A\bgrad TG\rho\cdot\bgrad TG\xi = \int_\Omega\A\bgrad\E(\rho)\cdot\bgrad\E(\xi) = \int_\Omega f\E(\xi) = \sum_{\tau\in\triH}\int_\tau f(TG\xi-c_\xi)
\]
for all $\rho,\xi\in H_0^{1/2}(\btriH)$, and that is the same as~\eqref{e:rhodefa}.

\color{black} Discretizing \color{black}~\eqref{e:cMsFEM} using $\Gamma_{H_{\Gamma}}\subset H_0^{1/2}(\btriH)$ yields the MsFEM (c.f~\cite{HouWuCai99}) whose solution is
\[
 u_{\MsFEM} := \E(\rho_{H_{\Gamma}})  
\]
where $\rho_{H_{\Gamma}} \in \Gamma_{H_{\Gamma}}$ is the solution of~\eqref{e:cMsFEM} restricted to $\Gamma_{H_{\Gamma}}$. Note that if $T_h = T$ and $G_h = G$ then the corresponding solution $u_{h}$ of MH$^2$M relates to $u_{\MsFEM}$  as follows
\[
u_{h} = u_{\MsFEM} + (I-TG)\widetilde Tf = u_{\MsFEM} + \widetilde Tf -\EE(\widetilde Tf)
\]
where we used~\eqref{disc-sol},~\eqref{e:identity1} and~\eqref{u0h}. It follows in particular that $u_{h}|_{\dtau} = u_{\MsFEM}|_{\dtau} = \rho_{H_{\Gamma}}$ for all $\tau\in\triH$.
\begin{remark}[Polynomial solutions]
If $\Gamma_{H_{\Gamma}} = \mathbb{P}_1(\btriH)\cap H_0^{1/2}(\btriH)$ and $\mathcal{A=\alpha\,I}$, with $\alpha\in \RR^+$, then 
\[
u_{h} - \widetilde Tf  + \EE(\widetilde Tf)=u_{\MsFEM} \in \mathbb{P}_1(\triH),
\]
where  $\mathbb{P}_k(D)$ is the space of piecewise polynomial functions of degree up to $k \geq 0$ on the set  $D =  \triH$ or $D = \btriH$.
%
In addition, if $f \in  \mathbb{P}_0(\triH)$  then $\widetilde Tf \in   \mathbb{P}_2(\triH)$ such that $ \nabla \widetilde Tf \in RT_0(\triH)$ where $RT_0(\triH)$ stands for the lowest-order Raviart-Thomas space in each $\tau\in \triH$.
\end{remark}

In practice, the discrete mapping $G_h$ is used instead of $G$ and the relation~\eqref{e:identity1} does not hold. So, the solutions of MsFEM and MH$^2$M do not coincide on element boundaries in general.


%
%
\section{Numerical analysis}
\label{sec:analysis}

This section contains the proof of the well-posedness and best approximation properties of the MH$^2$M given in~\eqref{e:rhodef} (see also~\eqref{e:mhhm},~\eqref{e:mhhmh} or~\eqref{e:rhodefah}). 

\subsection{Well-posedness}
\label{ssec:approx}

Define the bilinear forms $g_{h,\tau}: \color{black} H^{1/2}(\partial\tau)\color{black} \times \color{black} H^{1/2}(\partial\tau) \color{black}\to\RR$ for $\tau\in\triH$, and $g_h:H_0^{1/2}(\btriH)\times H_0^{1/2}(\btriH)\to\RR$ by
\begin{equation}
\label{e:ghdef}
g_{h,\tau}(\xi,\phi)=\langle G_h\xi,\phi\rangle_\dtau,
\qquad
g_h(\xi,\phi)=\sum_{\tau\in\triH}g_{h,\tau}(\xi,\phi)
\end{equation}
for $\xi$, $\phi\in H_0^{1/2}(\btriH)$. Using those notations, the MH$^2$M reads: Find  $\rho_{H_\Gamma} \in \Gamma_{H_{\Gamma}}$ such that
\begin{equation}\label{e:mhhm_d}
g_h(\rho_{H_\Gamma},\xi)=-\langle\lambda^0,\xi\rangle_\btriH+g_h(\widetilde T_hf,\xi)
\quad\text{for all }\xi\in \Gamma_{H_{\Gamma}}.
\end{equation}
We first address  the existence and uniqueness of solution for~\eqref{e:mhhm_d}. This result is established under the following conditions: 

\begin{itemize}
\item[] \emph {Assumption A}: there exists a positive constant $\beta_\tau$, independent of $H_\tau$, such that
\[
|\mu |_{H^{-1/2}(\dtau)} \leq \beta_\tau\, |T_h\mu |_{H^1_{\A}(\tau)} \quad \text{for all } \mu \in  \widetilde\Lambda_{H_\Lambda} \text{ and }\tau\in\triH;
\]
\item[] \emph {Assumption B}: there exists a positive constant $\alpha_\tau$, independent of $H_\tau$, such that
\[
|\xi|_{H^{1/2}(\dtau)} \leq \alpha_\tau\, |G_h\xi |_{H^{-1/2}(\dtau)} \quad \text{for all } \xi \in \Gamma_{H_\Gamma}\cap\tH^{1/2}(\dtau) \text{ and }\tau\in\triH.
\]
\end{itemize}

Let $\alpha_{\max}:=\max\{\alpha_\tau :\,\tau\in\triH\}$ and $\beta_{\max}:=\max\{\beta_\tau:\tau\in\triH\}$. 

\begin{remark}[Boundeness of $G_h$]
A first consequence of Assumption A is that the mapping $G_h$ is bounded. Indeed, let \color{black} $\xi\in H^{1/2}_0(\btriH)$ \color{black} and select $\mu := G_h\xi\in  \widetilde\Lambda_{H_\Lambda}$ in Assumption A. Then,  from Lemma~\ref{l:ids}, item $ (iii)$
\begin{gather*}
\beta_\tau^{-2} |G_h\xi|_{H^{-1/2}(\dtau)}^2
\le|T_hG_h\xi|_{H_\A^1(\tau)}^2
=\langle G_h\xi,T_hG_h\xi\rangle_\dtau \\
=(G_h\xi,\xi)_\dtau
\le|G_h\xi|_{H^{-1/2}(\dtau)}|\xi|_{H^{1/2}(\dtau)}.
\end{gather*}
Then,
\begin{gather}
\label{bGh}
|G_h\xi|_{H^{-1/2}(\dtau)} \leq \beta_\tau^2|\xi|_{H^{1/2}(\dtau)}\quad\text{and}\quad |G_h\xi|_{\Lambda} \leq \beta_{\max}^2|\xi|_{H^{1/2}(\btriH)}.
\end{gather}
\end{remark}

The MH$^2$M is well-posed under Assumptions A and B. This follows next.

\begin{theorem}\label{l:coerch}
Let $g_{h,\tau}$, $g_h$ be defined by~\eqref{e:ghdef} and assume that Assumptions A and B hold. Then, we have the following coercivity results: 
\begin{gather*}
g_{h,\tau}(\txi_{H_{\Gamma}},\txi_{H_\Gamma})
\ge(\alpha_\tau\beta_\tau)^{-2}|\txi_{H_{\Gamma}}|_{H^{1/2}(\dtau)}^2
\quad\text{for all }\txi_{H_{\Gamma}}\in\Gamma_{H_\Gamma}\cap\tH^{1/2}(\dtau),
\\
g_h(\xi_{H_{\Gamma}},\xi_{H_{\Gamma}})\ge(\alpha_{\max}\beta_{\max})^{-2}|\xi_{H_{\Gamma}}|_{H^{1/2}(\btriH)}^2
\quad\text{for all }\xi_{H_{\Gamma}}\in\Gamma_{H_{\Gamma}}.
\end{gather*}
Moreover, the following continuity results hold:
\begin{gather*}
g_{h,\tau}(\xi_{H_{\Gamma}},\rho_{H_{\Gamma}})\le  \beta_\tau^2|\xi_{H_{\Gamma}}|_{H^{1/2}(\dtau)}|\rho_{H_{\Gamma}}|_{H^{1/2}(\dtau)}, 
\\
g_h(\xi_{H_{\Gamma}},\rho_{H_{\Gamma}})
\le \beta_{\max}^2|\xi_{H_{\Gamma}}|_{H^{1/2}(\triH)}|\rho_{H_{\Gamma}}|_{H^{1/2}(\triH)},
\end{gather*}
for all $\xi_{H_{\Gamma}}$, $\rho_{H_{\Gamma}}\in\Gamma_{H_{\Gamma}}$.
Then, the method \eqref{e:mhhm_d} is well-posed and
\[ 
|\rho_{H_\Gamma}|_{H^{1/2}(\btriH)} \leq (\alpha_{\max}\beta_{\max})^2\Big(|\lambda^0|_{\Lambda} +\beta_{\max}^2|\widetilde T_h f |_{H^{1/2}(\btriH)}\Big).
\]
\end{theorem}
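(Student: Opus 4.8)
The plan is to reduce every claim to the single algebraic identity obtained from~\eqref{e:Ghdef} by taking the test function $\tmu_{H_\Lambda}=G_h\xi$, namely
$g_{h,\tau}(\xi,\xi)=\langle G_h\xi,\xi\rangle_\dtau=|T_hG_h\xi|_{H_\A^1(\tau)}^2$ (this is exactly the computation in the Remark preceding the statement, via Lemma~\ref{l:ids}). This identity converts local coercivity into a chain of the two inf--sup-type bounds supplied by the Assumptions, applied on opposite sides: Assumption~A controls the $H^{-1/2}(\dtau)$-size of $G_h\xi$ from below by the $H_\A^1$-size of $T_hG_h\xi$, while Assumption~B controls the $H^{1/2}(\dtau)$-size of $\xi$ from below by the $H^{-1/2}(\dtau)$-size of $G_h\xi$.

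For the local coercivity I take $\txi_{H_\Gamma}\in\Gamma_{H_\Gamma}\cap\tH^{1/2}(\dtau)$ and use the identity to write $g_{h,\tau}(\txi_{H_\Gamma},\txi_{H_\Gamma})=|T_hG_h\txi_{H_\Gamma}|_{H_\A^1(\tau)}^2$. Since $G_h\txi_{H_\Gamma}\in\tL_{H_\Lambda}$, Assumption~A gives $|T_hG_h\txi_{H_\Gamma}|_{H_\A^1(\tau)}\ge\beta_\tau^{-1}|G_h\txi_{H_\Gamma}|_{H^{-1/2}(\dtau)}$, and Assumption~B gives $|G_h\txi_{H_\Gamma}|_{H^{-1/2}(\dtau)}\ge\alpha_\tau^{-1}|\txi_{H_\Gamma}|_{H^{1/2}(\dtau)}$. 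Squaring and composing produces the factor $(\alpha_\tau\beta_\tau)^{-2}$.

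To globalize, I first record that $G_h$ annihilates element-wise constants: if $\xi^0|_\dtau$ is constant then $\langle\tmu_{H_\Lambda},\xi^0\rangle_\dtau=0$ for all $\tmu_{H_\Lambda}\in\tL_{H_\Lambda}$, so~\eqref{e:Ghdef} forces $T_hG_h\xi^0=0$ and hence $G_h\xi^0=0$. Decomposing $\xi_{H_\Gamma}|_\dtau=\txi+\xi^0$ into zero-mean and constant parts and using that $G_h\txi$ has zero mean, I obtain $g_{h,\tau}(\xi_{H_\Gamma},\xi_{H_\Gamma})=g_{h,\tau}(\txi,\txi)$, to which the local coercivity applies (its zero-mean hypothesis now being met). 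Summing over $\tau$, bounding $(\alpha_\tau\beta_\tau)^{-2}\ge(\alpha_{\max}\beta_{\max})^{-2}$, and invoking Lemma~\ref{l:halfnorm} to identify $\sum_{\tau}|\xi_{H_\Gamma}|_{H^{1/2}(\dtau)}^2$ with $|\xi_{H_\Gamma}|_{H^{1/2}(\btriH)}^2$ yields the global coercivity. The continuity bounds are softer: since $G_h\xi_{H_\Gamma}$ has zero mean, the constant part of $\rho_{H_\Gamma}$ drops out, so the duality defining $|\cdot|_{H^{-1/2}(\dtau)}$ in~\eqref{norm12d} together with the boundedness~\eqref{bGh} of $G_h$ gives $g_{h,\tau}(\xi_{H_\Gamma},\rho_{H_\Gamma})=\langle G_h\xi_{H_\Gamma},\rho_{H_\Gamma}\rangle_\dtau\le\beta_\tau^2|\xi_{H_\Gamma}|_{H^{1/2}(\dtau)}|\rho_{H_\Gamma}|_{H^{1/2}(\dtau)}$; a discrete Cauchy--Schwarz over elements followed by Lemma~\ref{l:halfnorm} delivers the global version.

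With $g_h$ coercive and continuous on $\Gamma_{H_\Gamma}$ and the right-hand side of~\eqref{e:mhhm_d} a bounded linear functional, the Lax--Milgram lemma gives existence and uniqueness of $\rho_{H_\Gamma}$. For the stability estimate I test~\eqref{e:mhhm_d} with $\xi=\rho_{H_\Gamma}$: coercivity lower-bounds the left side by $(\alpha_{\max}\beta_{\max})^{-2}|\rho_{H_\Gamma}|_{H^{1/2}(\btriH)}^2$, while the right side is bounded by $\big(|\lambda^0|_\Lambda+\beta_{\max}^2|\widetilde T_hf|_{H^{1/2}(\btriH)}\big)|\rho_{H_\Gamma}|_{H^{1/2}(\btriH)}$, using the $\Lambda$--$H^{1/2}(\btriH)$ duality for the $\lambda^0$-term (exactly as in Theorem~\ref{p:coerc}) and the global continuity of $g_h$ for the remaining term; dividing through gives the claimed bound. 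The main obstacle is the coercivity rather than the continuity: one must compose the two assumptions on the correct sides and, crucially, peel off the element-wise constant so that Assumption~B---stated only for zero-mean traces---can legitimately be invoked, all while keeping in mind that the very definition of $G_h$ presupposes the injectivity of $T_h$ that the compatibility between $\tL_{H_\Lambda}$ and $\tVh$ is meant to guarantee.
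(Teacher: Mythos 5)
Your proposal is correct and follows essentially the same route as the paper: the identity $g_{h,\tau}(\xi,\xi)=\langle G_h\xi,T_hG_h\xi\rangle_\dtau=|T_hG_h\xi|_{H_\A^1(\tau)}^2$ from~\eqref{e:Ghdef}, composition of Assumptions A and B for local coercivity, removal of the element-wise constant (the paper does this via symmetry of $G_h$ and the zero-mean image, you via $G_h\xi^0=0$, which indeed needs the injectivity supplied by Assumption A — an equivalent step), Lemma~\ref{l:ids}(iii) with~\eqref{bGh} for continuity, and Lax--Milgram plus testing with $\rho_{H_\Gamma}$ for the stability bound. No gaps relative to the paper's own argument.
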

\begin{proof}
Fix $\txi_{H_{\Gamma}}\in\Gamma_{H_{\Gamma}}\cap\tH^{1/2}(\dtau)$. The local coercivity holds since, from Assumptions $A$ and $B$, and the definition of mapping $G_h(\cdot)$ in \eqref{e:Ghdef}
\begin{gather*}
|\txi_{H_{\Gamma}}|_{H^{1/2}(\dtau)}^2 \le\alpha_\tau^2\beta_\tau^2|T_hG_h\txi_{H_{\Gamma}}|_{H_\A^1(\tau)}^2
=\alpha_\tau^2\beta_\tau^2\langle G_h\txi_{H_{\Gamma}},T_hG_h\txi_{H_{\Gamma}}\rangle_\dtau
=\alpha_\tau^2\beta_\tau^2 \,g_{h,\tau}(\txi_{H_{\Gamma}},\txi_{H_{\Gamma}}).
\end{gather*}
Next, take $\xi_{H_{\Gamma}}\in\Gamma_{H_{\Gamma}}$. Then $\xi_{H_{\Gamma}}=\txi_{H_{\Gamma}}+\xi_{H_{\Gamma}}^0$, where $\txi_{H_{\Gamma}}|_\dtau\in\tH^{1/2}(\dtau)$ and $\xi_{H_{\Gamma}}^0|_\dtau$ is constant for each $\tau\in\triH$. So, since $G_h(\cdot)$ is symmetric and its image has zero mean value on $\dtau$, it holds that 
\begin{gather*}
g_h(\xi_{H_{\Gamma}},\xi_{H_{\Gamma}})
=\sum_{\tau\in\triH}g_{h,\tau}(\xi_{H_{\Gamma}},\xi_{H_{\Gamma}})
=\sum_{\tau\in\triH}g_{h,\tau}(\txi_{H_{\Gamma}},\txi_{H_{\Gamma}}) \\
\ge\sum_{\tau\in\triH}(\alpha_\tau\beta_\tau)^{-2}|\txi_{H_{\Gamma}}|_{H^{1/2}(\tau)}^2
\ge(\alpha_{\max}\beta_{\max})^{-2}|\xi_{H_{\Gamma}}|_{H^{1/2}(\btriH)}^2.
\end{gather*}

To show continuity, we use Lemma~\ref{l:ids}, item $(iii)$ and \eqref{bGh} to get
\[
g_{h,\tau}(\xi_{H_{\Gamma}},\rho_{H_{\Gamma}})
\le |G_h\xi_{H_{\Gamma}}|_{H^{-1/2}(\dtau)}|\rho_{H_{\Gamma}}|_{H^{1/2}(\dtau)}
\le \beta_\tau^2|\xi_{H_{\Gamma}}|_{H^{1/2}(\dtau)}|\rho_{H_{\Gamma}}|_{H^{1/2}(\dtau)}. 
\]
Finally,
\[
g_h(\xi_{H_{\Gamma}},\rho_{H_{\Gamma}})
\le\sum_{\tau\in\triH}\beta_\tau^2|\xi_{H_{\Gamma}}|_{H^{1/2}(\dtau)}|\rho_{H_{\Gamma}}|_{H^{1/2}(\dtau)}
\le \beta_{\max}^2|\xi_{H_{\Gamma}}|_{H^{1/2}(\triH)}|\rho_{H_{\Gamma}}|_{H^{1/2}(\triH)} 
\]
from the Cauchy-Schwartz inequality and Lemma~\ref{l:halfnorm}. Existence and uniqueness of solution of~\eqref{e:mhhm_d} follow from the Lax-Milgram lemma, and 
\begin{gather*}
|\rho_{H_\Gamma}|_{H^{1/2}(\btriH)}^2 \leq (\alpha_{\max}\beta_{\max})^2 g_h(\rho_{H_\Gamma},\rho_{H_\Gamma}) =  (\alpha_{\max}\beta_{\max})^2\Big(-\langle\lambda_0,\rho_{H_\Gamma}\rangle_{\btriH} + g_h(\widetilde T_h f, \rho_{H_\Gamma})_\btriH \Big)  \\ 
\leq  (\alpha_{\max}\beta_{\max})^2\Big(|\lambda^0|_{\Lambda} + \beta_{\max}^2|\widetilde T_h f |_{H^{1/2}(\btriH)}\Big)|\rho_{H_\Gamma}|_{H^{1/2}(\btriH)},
\end{gather*}
and the result follows.
\end{proof}

Given the spaces $\widetilde\Lambda_{H_{\Lambda}}$, $\Gamma_{H_{\Gamma}}$ and $\widetilde V_{h}$, it may be difficult to verify directly whether Assumptions $A$ and $B$ are valid, and \color{black} we  adapt the standard approach based on Fortin operators acting on these finite-dimensional spaces to our setting in order to ease the proof. \color{black} The upshot is that it clarifies in what sense the spaces $\Lambda_{H_{\Lambda}} $ and $ V_{h}$, and the spaces $\Lambda_{H_{\Lambda}} $ and $\Gamma_{H_{\Gamma}}$ must be compatible to satisfy Assumptions $A$ and $B$. We detail this alternative below for both assumptions.
\begin{remark}[Assumption $A$ from a Fortin operator]
\label{sA}
Assumption $A$ is closely related to a compatibility condition between the finite-dimensional spaces $V_h$ and $\Lambda_{H_{\Lambda}}$. Specifically, let $V_{h_0}\subset V_h\subset H^1(\triH)$ be a finite dimensional space such that:
\begin{itemize}
%
\item[--] \color{black} there exists a mapping $\pi_V : H^1(\triH)\rightarrow V_{h_0}$ such that, for all  $v\in H^1(\triH)$ and $\tau\in\triH$, $\pi_V(v)$ satisfies \color{black}
%
\begin{equation}
\label{piL}
\int_{\dtau}\mu\,\pi_V(v)\,d\xx=\int_{\dtau}\mu\,v\,d\xx\quad\text{for all }\mu\in\Lambda_{H_{\Lambda}},
\qquad
\color{black} |\pi_V(v)|_{H^1_\A(\tau)} \leq \beta_\tau\, |v|_{H^1_\A(\tau)}, \color{black}
\end{equation}
where $\beta_\tau$ is a positive constant independent of mesh parameters.\color{black}
\end{itemize}

To see that~\eqref{piL} implies Assumption~A, take $\mu \in \widetilde\Lambda_{H_{\Lambda}}$ and note that $\pi_V(\tv)\in\tVhz:=V_{h_0}\cap\tH^1(\triH)$ for all $\tv\in\tH^1(\triH)$. Then, from the definition of $|\cdot|_{H^{-1/2}(\dtau)}$ in~\eqref{norm12d},~\eqref{piL}, the definition of $T_h$ operator, and Cauchy--Schwartz inequality, we get
\color{black}
\begin{gather*}
|\mu |_{H^{-1/2}(\dtau)} =  \sup_{\tphi\in\tH^{1/2}(\dtau)}\frac{\langle\mu,\tphi\rangle_\dtau}{|\tphi|_{H^{1/2}(\dtau)}}  =  \sup_{\tv\in\tH^{1}(\tau)}\frac{\langle\mu,\tv\rangle_\dtau}{|\tv|_{H^1_\A(\tau)}} \leq \beta_\tau  \sup_{\tv\in\tH^{1}(\tau) }\frac{\langle\mu,\pi_V(\tv)\rangle_\dtau}{|\pi_V(\tv)|_{H^{1}_A(\tau)}}   \\
\leq \beta_\tau   \sup_{\tv_h\in \widetilde V_{h_0}(\tau)}\frac{\langle\mu,\tv_h \rangle_\dtau}{|\tv_h|_{H^{1}_\A(\tau)}} =  \beta_\tau  \sup_{\tv_h\in \widetilde V_{h_0}(\tau)}\frac{\int_{\tau} \A\nabla T_h\mu\cdot \nabla\tv_h \,d\xx}{|\tv_h|_{H^{1}_\A(\tau)}}  
= \beta_\tau\,  |  T_h\mu |_{H^1_\A(\tau)}.
\end{gather*}
\color{black}
Note that Assumption $A$ holds with the same constant $\beta_\tau$ if $V_h$ replaces $V_{h_0}$ in~\eqref{piL}.
\end{remark}

\color{black}
The existence of a mapping $\pi_V(\cdot)$ satisfying~\eqref{piL}, in the case when $\dim \tL_{H_\Lambda}(\dtau) = \dim\tVhz(\tau)|_\dtau$,  is sufficient to  fulfill  Assumption $B$ provided that $\Gamma_{H_\Gamma}(\dtau)\cap\tH^{1/2}(\dtau) \subseteq \tVhz(\tau)|_\dtau$ for all $\tau\in\triH$. This is the subject of the next lemma.

\begin{lemma}
\label{comp-par}
Assume $\dim \tL_{H_\Lambda}(\dtau) = \dim\tVhz(\tau)|_\dtau$. If there exists a mapping $\pi_V(\cdot)$ satisfying~\eqref{piL} and 
\[
\Gamma_{H_{\Gamma}}(\dtau)\cap\tH^{1/2}(\dtau)\subseteq\tVhz(\tau)|_\dtau\quad \text{for all }\tau\in\triH,
\] 
then Assumption $B$ holds with $\alpha_\tau=\beta_\tau$.
\end{lemma}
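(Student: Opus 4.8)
The plan is to argue locally on a fixed $\tau\in\triH$ and to prove Assumption $B$ in its sharp form, $|\xi|_{H^{1/2}(\dtau)}\le\beta_\tau\,|G_h\xi|_{H^{-1/2}(\dtau)}$ for $\xi\in\Gamma_{H_\Gamma}\cap\tH^{1/2}(\dtau)$, by squeezing the auxiliary quantity $S(\xi):=\sup_{\tmu\in\tL_{H_{\Lambda}}}\langle\tmu,\xi\rangle_\dtau/|\tmu|_{H^{-1/2}(\dtau)}$ between $|G_h\xi|_{H^{-1/2}(\dtau)}$ from above and $\beta_\tau^{-1}|\xi|_{H^{1/2}(\dtau)}$ from below. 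The upper squeeze is an easy consequence of the definitions of $T_h$ and $G_h$; the lower squeeze is the heart of the matter and is where both the Fortin operator $\pi_V$ and the inclusion hypothesis enter.

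First I would prove $S(\xi)\le|G_h\xi|_{H^{-1/2}(\dtau)}$. For $\tmu\in\tL_{H_{\Lambda}}$, the defining relation~\eqref{e:Ghdef} gives $\langle\tmu,\xi\rangle_\dtau=\langle\tmu,T_hG_h\xi\rangle_\dtau$, which by~\eqref{e:Thdef} (tested with $T_hG_h\xi\in\tVh$) equals $\int_\tau\A\bgrad(T_h\tmu)\cdot\bgrad(T_hG_h\xi)\,d\xx$; Cauchy--Schwarz together with~\eqref{bTh}, $|T_h\tmu|_{H^1_\A(\tau)}\le|\tmu|_{H^{-1/2}(\dtau)}$, then yields $\langle\tmu,\xi\rangle_\dtau\le|\tmu|_{H^{-1/2}(\dtau)}\,|T_hG_h\xi|_{H^1_\A(\tau)}$. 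It remains to note that $|T_hG_h\xi|_{H^1_\A(\tau)}\le|G_h\xi|_{H^{-1/2}(\dtau)}$, which follows by testing the supremum defining $|G_h\xi|_{H^{-1/2}(\dtau)}$ with the zero--mean trace $(T_hG_h\xi)|_\dtau\in\tH^{1/2}(\dtau)$, using $\langle G_h\xi,T_hG_h\xi\rangle_\dtau=|T_hG_h\xi|^2_{H^1_\A(\tau)}$ and the trace inequality $\bigl|(T_hG_h\xi)|_\dtau\bigr|_{H^{1/2}(\dtau)}\le|T_hG_h\xi|_{H^1_\A(\tau)}$. Dividing by $|\tmu|_{H^{-1/2}(\dtau)}$ and taking the supremum over $\tmu$ gives the upper squeeze.

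The main obstacle is the lower squeeze $|\xi|_{H^{1/2}(\dtau)}\le\beta_\tau\,S(\xi)$, which is a \emph{discrete transpose inf--sup} estimate for the pairing $\langle\cdot,\cdot\rangle_\dtau$ on $\tL_{H_{\Lambda}}(\dtau)\times\bigl(\tVhz(\tau)|_\dtau\bigr)$ evaluated at $\xi$. Here I would first invoke the inclusion hypothesis to write $\xi\in\tVhz(\tau)|_\dtau$, so that $\xi$ is an admissible test trace. The Fortin operator $\pi_V$ of~\eqref{piL} furnishes, exactly as in the intermediate estimate of Remark~\ref{sA}, the \emph{multiplier} inf--sup $|\tmu|_{H^{-1/2}(\dtau)}\le\beta_\tau\sup_{v_h\in\tVhz(\tau)}\langle\tmu,v_h\rangle_\dtau/|v_h|_{H^{1/2}(\dtau)}$ for every $\tmu\in\tL_{H_{\Lambda}}$; that is, the form $\langle\cdot,\cdot\rangle_\dtau$ has inf--sup constant at least $\beta_\tau^{-1}$ in the multiplier direction. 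The key additional observation is that the unique solvability built into~\eqref{piL} forces the moment map $\tVhz(\tau)|_\dtau\to\tL_{H_{\Lambda}}(\dtau)^{*}$, $\phi\mapsto\langle\cdot,\phi\rangle_\dtau$, to be bijective: existence of $\pi_V(v)$ for all $v$ gives surjectivity, uniqueness gives injectivity. Hence $\dim\tVhz(\tau)|_\dtau=\dim\tL_{H_{\Lambda}}(\dtau)$ and the form is represented by a square, nonsingular matrix, for which the inf--sup constant is the same in both directions; the \emph{test--function} inf--sup therefore also holds with constant $\beta_\tau^{-1}$, and specialising it to $\phi=\xi$ gives $|\xi|_{H^{1/2}(\dtau)}\le\beta_\tau\,S(\xi)$.

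Combining the two squeezes yields $|\xi|_{H^{1/2}(\dtau)}\le\beta_\tau\,S(\xi)\le\beta_\tau\,|G_h\xi|_{H^{-1/2}(\dtau)}$, which is precisely Assumption $B$ with $\alpha_\tau=\beta_\tau$. I expect the only delicate point to be the passage from the multiplier inf--sup (which $\pi_V$ delivers directly, in the spirit of Remark~\ref{sA}) to the transpose inf--sup: this is legitimate here only because~\eqref{piL} simultaneously guarantees the matching dimensions, and it is at exactly this step that the hypothesis $\Gamma_{H_\Gamma}(\dtau)\cap\tH^{1/2}(\dtau)\subset\tVhz(\tau)|_\dtau$ is indispensable, as it places $\xi$ inside the discrete test space for which the estimate is available.
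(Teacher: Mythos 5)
Your proof is correct, but it takes a genuinely different route from the paper's. The paper argues constructively: it picks $v\in\tVhz(\tau)$ with $v|_\dtau=\xi$ (this is where the inclusion hypothesis enters), uses the moment conditions in \eqref{piL} together with \eqref{e:Ghdef} to show that $\pi_V(TG_hv)$ and $v$ have the same moments against every $\mu\in\Lambda_{H_\Lambda}$, concludes from the uniqueness (up to a bubble) built into \eqref{piL} that $\pi_V(TG_hv)|_\dtau=\xi$, and then chains the stability of $\pi_V$ with Lemma~\ref{l:ids}-(i) and the stability of $T$ to obtain $|\xi|_{H^{1/2}(\dtau)}\le\beta_\tau\,|G_h\xi|_{H^{-1/2}(\dtau)}$ directly, with no inf--sup transposition. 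You instead factor the estimate through the discrete dual quantity $S(\xi)$ and transpose the multiplier inf--sup of Remark~\ref{sA} across the square nonsingular moment matrix pairing $\tL_{H_\Lambda}(\dtau)$ with $\tVhz(\tau)|_\dtau$; squareness and nonsingularity come from existence/uniqueness in \eqref{piL}, and the inclusion hypothesis is what makes $\xi$ an admissible test trace, so both proofs use the hypotheses in the same roles. What your route buys: it proves Assumption $B$ verbatim for the two-level operator $G_h$ of \eqref{e:Ghdef} (built with $T_h$), whereas the paper's proof as written replaces $T_h$ by $T$ in \eqref{e:Ghdef} (and its final display has equalities where the stability step should give $\le$). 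What it costs: you invoke, without proof, the fact that a square nonsingular finite-dimensional pairing has the same inf--sup constant in both directions (equivalently, a matrix and its transpose have the same smallest "singular value" relative to the chosen norms) --- true, standard, but worth a line; and your surjectivity claim silently uses that every functional on the finite-dimensional space $\tL_{H_\Lambda}(\dtau)$ is represented by pairing with some $v\in H^1(\tau)$. That step can be avoided: uniqueness in \eqref{piL} gives injectivity of the trace-to-moment map, while the multiplier inf--sup gives injectivity in the opposite direction, and the two dimension inequalities already force squareness and bijectivity. Neither point is a genuine gap.
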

\begin{proof}
First note that $\pi_V$ is uniquely defined since $\dim\tL_{H_\Lambda}(\dtau)=\dim\tVhz(\tau)|_\dtau$. Let $\xi\in\Gamma_{H_{\Gamma}}(\tau)\cap\tH^{1/2}(\dtau)$ and $v \in \tVhz(\tau)$  be such that $\xi |_{\partial\tau}  = v|_{\partial\tau}$ 
for all $\tau\in\triH$. Note that from the definition of $\pi_V(\cdot)$ in~\eqref{piL} and \eqref{e:Ghdef} (with $T$ instead of $T_h$), for all $\mu \in \widetilde\Lambda_{H_{\Lambda}}$, it holds that 
\begin{gather*}
\int_{\dtau} \mu\,\xi \,d\xx = \int_{\dtau} \mu\, v \,d\xx = \int_{\dtau} \mu\, T G_h v \,d\xx = \int_{\dtau} \mu\, \pi_V(T G_h v) \,d\xx\,.
\end{gather*}
Then, the above equality is also valid for all $\mu\in\Lambda_{H_\Lambda}$ and~\eqref{piL} yields 
\begin{gather}
\label{vpiv}
\int_{\dtau} \mu\, (\xi-\pi_V(T G_h v) ) \,d\xx = 0 \, \Rightarrow \,v|_{\partial\tau} = \pi_V(T G_h v)|_{\partial\tau}\in\tVhz(\tau)\quad \text{for all }\tau\in\triH.
\end{gather}
Consequently, we get from \eqref{vpiv}, the stability of $\pi_V(\cdot)$ in \eqref{piL}, Lemma~\ref{l:ids}$-(i)$ and the stability of $T$ in~\eqref{boundT}
\begin{equation*}
|\xi|_{H^{1/2}(\dtau)}=
|\pi_V(TG_h v)|_{H^{1/2}(\dtau)}
\leq\beta_\tau\,|G_hv|_{H^{-1/2}(\dtau)}
=\beta_\tau\,|G_h\xi|_{H^{-1/2}(\dtau)},
\end{equation*}
which corresponds to Assumption $B$ with $\alpha_\tau=\beta_\tau$.
\end{proof}

More generally, the validity of Assumption~B is closely connected to the existence of a Fortin operator satisfying the appropriate stability and commuting properties.
\color{black}

\begin{remark}[Assumption $B$ from a Fortin operator]
\label{sB}
Assumption $B$ is related to a compatibility condition between the spaces $\Lambda_{H_{\Lambda}}$ and $\Gamma_{H_{\Gamma}}$. Specifically, let $\Lambda_{H_{\Lambda_0}}\subset\Lambda_{H_\Lambda}\subset\Lambda$ be a finite-dimensional space such that

\begin{itemize}
\item[--] there exists a mapping $\pi_\Lambda : \color{black} \tL \rightarrow \tL_{H_{\Lambda_0}} \color{black} $ and a positive constant $\alpha_\tau$,  independent of mesh parameters, such that for all \color{black} $\tmu\in\tL$ \color{black} and $\tau\in\triH$, it follows that
\begin{equation}
\label{piGB}
\begin{gathered}
 \int_{\dtau} \pi_{\Lambda}(\color{black} \tmu \color{black})\, \xi \,d\xx = \langle \color{black}\tmu\color{black}, \xi \rangle_{\dtau} \quad\text{for all }\xi \in \Gamma_{H_\Gamma}, \\
 |\pi_\Lambda(\color{black}\tmu\color{black})|_{H^{-1/2}(\dtau)} \leq\alpha_\tau\, |\color{black}\tmu\color{black} |_{H^{-1/2}(\dtau)}.
 \end{gathered}
\end{equation}
\end{itemize}

Assumption $B$ follows from \eqref{piGB}. Indeed, take $\xi \in \Gamma_{H_{\Gamma}}(\tau)\cap\tH^{1/2}(\dtau)$ for all $\tau\in\triH$ and note that $\pi_\Lambda(\tmu)\in\tL_{H_{\Lambda_{0}}}:=\Lambda_{H_{\Lambda_0}}\cap\tL$ for all $\tmu\in\tL$. Next, use the characterization $|\cdot |_{H^{1/2}(\dtau)}$ in~\eqref{e:id3},~\eqref{piGB}, the definition of $T$ and its stability~\eqref{boundT}, Cauchy-Schwartz inequality, and Lemma~\ref{l:ids}$-(i)$, to obtain
\begin{gather*}
|\xi |_{H^{1/2}(\dtau)}=  \sup_{\tmu\in \tH^{-1/2}(\dtau)}\frac{\langle\tmu,\xi\rangle_\dtau}{|\tmu|_{H^{-1/2}(\dtau)}} \leq \alpha_\tau   \sup_{\tmu\in \tH^{-1/2}(\dtau)}\frac{\langle \pi_\Lambda(\tmu),\xi\rangle_\dtau}{|\pi_\Lambda(\tmu)|_{H^{-1/2}(\dtau)}} \leq \alpha_\tau   \sup_{\tmu_{H_\Gamma}\in \tL_{H_{\Lambda_0}}(\dtau)}\frac{\langle \tmu_{H_\Gamma},\xi\rangle_\dtau}{|\tmu_{H_\Gamma}|_{H^{-1/2}(\dtau)}} \\
= \alpha_\tau   \sup_{\tmu_{H_\Gamma}\in \tL_{H_{\Lambda_0}}(\dtau)}\frac{\int_{\tau} \A\nabla T G_h\xi \cdot \nabla T \tmu_{H_\Gamma} \,d\xx}{|\tmu_{H_\Gamma}|_{H^{-1/2}(\dtau)}} \leq  \alpha_\tau\,  |  T G_h\xi |_{H^1_\A(\tau)} = \alpha_\tau\,  |  G_h\xi |_{H^{-1/2}(\tau)} .
\end{gather*}
In addition, Assumptions $B$ holds with the same constant  $\alpha_\tau$ if $\Lambda_{H_{\Lambda}}$ replaces $\Lambda_{H_{\Lambda_0}}$  in  \eqref{piGB}.
\end{remark}

\subsection{Best approximation}
\label{sec:best} 

We start by estimating the consistency error $G-G_h$ based on the first Strang Lemma~\cite{MR2050138}. In what follows, we denote by $C$ positive constants independent of the mesh parameters, which can change at each occurrence.

\begin{lemma}\label{l:strang}
Under the assumptions of Theorem~\ref{l:coerch}, for all $\phi\in H^{1/2}(\btriH)$, it holds that 
\[
|G\phi-G_h\phi|_\Lambda
\le E(G\phi), 
\]
where
\begin{equation}\label{e:Edef}
E(G\phi):=(1+\beta_{\max}^2)\,\inf_{\tmu_{H_{\Lambda}}\in\tL_{H_{\Lambda}}}
|G\phi-\tmu_{H_{\Lambda}}|_\Lambda + \beta_{\max}^2|(T-T_h)G\phi|_{H^{1/2}(\btriH)}.
\end{equation}
\end{lemma}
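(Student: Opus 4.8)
The plan is to read the defining relations \eqref{e:Gdef} and \eqref{e:Ghdef} as a Galerkin scheme afflicted by a variational crime and to invoke the first Strang lemma. First I would introduce the elementwise energy forms $a(\tmu,\widetilde\nu):=\sum_{\tau\in\triH}\int_\tau\A\bgrad(T\tmu)\cdot\bgrad(T\widetilde\nu)\,d\xx$ on $\tL\times\tL$ and $a_h(\tmu,\widetilde\nu):=\sum_{\tau\in\triH}\int_\tau\A\bgrad(T_h\tmu)\cdot\bgrad(T_h\widetilde\nu)\,d\xx$, and note from \eqref{e:Gdef}, \eqref{e:Ghdef} that $\eta:=G\phi\in\tL$ and $\eta_h:=G_h\phi\in\tL_{H_{\Lambda}}$ satisfy $a(\eta,\tmu)=\langle\tmu,\phi\rangle_\btriH$ for all $\tmu\in\tL$ and $a_h(\eta_h,\tmu_{H_{\Lambda}})=\langle\tmu_{H_{\Lambda}},\phi\rangle_\btriH$ for all $\tmu_{H_{\Lambda}}\in\tL_{H_{\Lambda}}$. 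Lemma~\ref{l:ids}-(i) gives $a(\tmu,\tmu)=|\tmu|_\Lambda^2$, so $a$ is continuous with constant $1$; by \eqref{bTh} the form $a_h$ is also continuous with constant $1$; and since $a_h(w_h,w_h)=\sum_\tau|T_hw_h|_{H^1_\A(\tau)}^2$, Assumption~$A$ yields coercivity on $\tL_{H_{\Lambda}}$, namely $a_h(w_h,w_h)\ge\beta_{\max}^{-2}|w_h|_\Lambda^2$.

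Two structural facts will drive the estimate. Comparing \eqref{e:definitionT} with \eqref{e:Thdef} on test functions $\widetilde v_h\in\tVh$ gives the Galerkin orthogonality $\int_\tau\A\bgrad(T\tmu-T_h\tmu)\cdot\bgrad\widetilde v_h\,d\xx=0$, i.e. $T_h\tmu=P_h(T\tmu)$ is the elementwise $\A$-orthogonal (Ritz) projection of $T\tmu$ onto $\tVh$. Moreover, testing \eqref{e:definitionT} against functions vanishing on $\dtau$ shows that $T\tmu$ is $\A$-harmonic in each $\tau$, with conormal derivative $\A\bgrad(T\tmu)\cdot\nn^\tau=\tmu$ in the sense of the pairing $\langle\cdot,\cdot\rangle_\dtau$ defined in Section~\ref{sec:partition}.

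With these at hand, fix $\tmu_{H_{\Lambda}}\in\tL_{H_{\Lambda}}$ and set $w_h:=\tmu_{H_{\Lambda}}-\eta_h\in\tL_{H_{\Lambda}}$. Since $w_h\in\tL_{H_{\Lambda}}\subset\tL$, the two defining relations give $a_h(\eta_h,w_h)=\langle w_h,\phi\rangle_\btriH=a(\eta,w_h)$, so coercivity yields
\[
\beta_{\max}^{-2}|w_h|_\Lambda^2\le a_h(w_h,w_h)=a_h(\tmu_{H_{\Lambda}}-\eta,w_h)+\bigl(a_h(\eta,w_h)-a(\eta,w_h)\bigr).
\]
The approximation term is handled by continuity of $a_h$, giving $|a_h(\tmu_{H_{\Lambda}}-\eta,w_h)|\le|\tmu_{H_{\Lambda}}-\eta|_\Lambda\,|w_h|_\Lambda$. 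The consistency term $a_h(\eta,w_h)-a(\eta,w_h)$ is the crux, and I expect it to be the main obstacle: a naive Cauchy--Schwarz bound in the broken energy seminorm only produces $|(T-T_h)G\phi|_{H^1_\A(\Omega)}$, which is cruder than the trace seminorm $|(T-T_h)G\phi|_{H^{1/2}(\btriH)}$ demanded by \eqref{e:Edef}.

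To recover the sharper norm I would use the projection identity to move $P_h$ onto the $\eta$-factor and write $a_h(\eta,w_h)-a(\eta,w_h)=-\sum_{\tau}\int_\tau\A\bgrad\bigl((I-P_h)T\eta\bigr)\cdot\bgrad(Tw_h)\,d\xx$. Since $Tw_h$ is $\A$-harmonic, integrating by parts and using $\A\bgrad(Tw_h)\cdot\nn^\tau=w_h$ converts this into the boundary pairing $-\sum_\tau\langle w_h,(T-T_h)\eta\rangle_\dtau$. Bounding each term by the $H^{-1/2}(\dtau)$--$H^{1/2}(\dtau)$ duality in \eqref{norm12d} (the zero mean of $w_h\in\tH^{-1/2}(\dtau)$ lets the trace seminorm absorb the constant), followed by Cauchy--Schwarz over $\triH$, gives $|a_h(\eta,w_h)-a(\eta,w_h)|\le|(T-T_h)G\phi|_{H^{1/2}(\btriH)}\,|w_h|_\Lambda$. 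Dividing through by $|w_h|_\Lambda$, applying the triangle inequality $|\eta-\eta_h|_\Lambda\le|\eta-\tmu_{H_{\Lambda}}|_\Lambda+|w_h|_\Lambda$, and taking the infimum over $\tmu_{H_{\Lambda}}\in\tL_{H_{\Lambda}}$ then reproduces $|G\phi-G_h\phi|_\Lambda\le E(G\phi)$ with exactly the constants $1+\beta_{\max}^2$ and $\beta_{\max}^2$.
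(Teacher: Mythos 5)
Your proof is correct and takes essentially the same route as the paper's: the paper also runs a first-Strang-lemma argument, using Assumption~$A$ for coercivity of $\langle\cdot,T_h\cdot\rangle_\btriH$ on $\tL_{H_{\Lambda}}$, the defining relations of $G$ and $G_h$ for consistency, the duality bounds of Lemma~\ref{l:ids} (items $(iii)$, $(iv)$) together with~\eqref{bTh} for the approximation and consistency terms, and a final triangle inequality yielding the identical constants $1+\beta_{\max}^2$ and $\beta_{\max}^2$. Your bilinear-form/Ritz-projection packaging, including the integration-by-parts detour for the consistency term, is just a restatement of the paper's direct computation, since $a_h(\eta,w_h)=\langle w_h,T_h\eta\rangle_\btriH$ and $a(\eta,w_h)=\langle w_h,T\eta\rangle_\btriH$ follow immediately from the definitions of $T_h$ and $T$.
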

\begin{proof}
Given $\phi\in H^{1/2}(\btriH)$, let $\tlambda:=G\phi$ and $\tlambda_{H_{\Lambda}}:=G_h\phi$. Then, by definition of mapping $G$ and $G_h$, for $\tmu_{H_{\Lambda}}\in\tL_{H_{\Lambda}}$, 
\begin{gather}
\label{consist}
\langle \tmu_{H_{\Lambda}},T\tlambda\rangle_\btriH
=\langle\tmu_{H_{\Lambda}},T_h \tlambda_{H_{\Lambda}}\rangle_\btriH
=\langle\tmu_{H_{\Lambda}},\phi\rangle_\btriH. 
\end{gather}
Hence, from Assumption $A$ and~\eqref{consist} 
\begin{equation*}
\begin{aligned}
\beta_{\max}^{-2}|\tlambda_{H_{\Lambda}}-\tmu_{H_{\Lambda}}|_\Lambda^2 
&\leq \langle\tlambda_{H_{\Lambda}}-\tmu_{H_{\Lambda}},T_h(\tlambda_{H_{\Lambda}}-\tmu_{H_{\Lambda}})\rangle_\btriH \nonumber
\\
&=\langle\tlambda_{H_{\Lambda}}-\tmu_{H_{\Lambda}},T_h(\tlambda-\tmu_{H_{\Lambda}})\rangle_\btriH+\langle\tlambda_{H_{\Lambda}}-\tmu_{H_{\Lambda}},T_h(\tlambda_{H_{\Lambda}}-\tlambda)\rangle_\btriH \nonumber
\\
&=\langle\tlambda_{H_{\Lambda}}-\tmu_{H_{\Lambda}},T_h(\tlambda-\tmu_{H_{\Lambda}})\rangle_\btriH+\langle\tlambda_{H_{\Lambda}}-\tmu_{H_{\Lambda}},(T-T_h)\tlambda\rangle_\btriH \nonumber
\\
&\le |\tlambda_{H_{\Lambda}}-\tmu_{H_{\Lambda}}|_\Lambda \Big( |\tlambda-\tmu_{H_{\Lambda}}|_\Lambda + | (T-T_h)\tlambda |_{{H^{1/2}(\btriH)}}\Big) \label{aux1}
\end{aligned}
\end{equation*}
where we also used Lemma~\ref{l:ids}$-(iii,iv)$ (with $T$ replaced by $T_h$) and~\eqref{bTh}. From the triangle inequality and inequality above we get
\[
|\tlambda-\tlambda_{H_{\Lambda}}|_\Lambda \leq |\tlambda-\tmu_{H_{\Lambda}}|_\Lambda + |\tlambda_{H_{\Lambda}}-\tmu_{H_{\Lambda}}|_\Lambda \leq \Big(1+\beta_{\max}^{2}\Big) |\tlambda-\tmu_{H_{\Lambda}}|_\Lambda + \beta_{\max}^{2} | (T-T_h)\tlambda |_{{H^{1/2}(\btriH)}}
\]
and the result follows. 
\end{proof}
Owing to the previous results, the next theorem shows that the method yields best approximation results.

\begin{theorem}\label{t:errorEstimates}
Assume that the conditions $A$ and $B$ are valid, and let $(u,\rho,\lambda)\in H^1(\mathcal{T}_H)\times H_0^{1/2}(\mathcal{T}_H)\times\Lambda$ solve~\eqref{e:weak-hybridH} and $\rho_{H_\Gamma}\in\Lambda_{H_\Lambda}$ solves~\eqref{e:mhhmh}, and  $(u_h,\lambda_{H_\Lambda})\in V_h\times\Lambda_{H_\Lambda}$ be given in~\eqref{disc-sol}. Then,
\begin{equation}
\label{e:rho0}
\begin{gathered}
|\rho-\rho_{H_{\Gamma}}|_{H^{1/2}(\btriH)}
\le C\,\Big( \inf_{\phi_{H_{\Gamma}}\in\Gamma_{H_{\Gamma}}} |\rho-\phi_{H_{\Gamma}}|_{H^{1/2}(\btriH)} + |(\widetilde T-\widetilde T_h)f|_{H^{1/2}(\btriH)}   +E(\tlambda)\Big),
\end{gathered}
\end{equation}
and
\begin{equation}
\label{e:lambdaest}
\begin{aligned}
|\lambda-\lambda_{H_\Lambda}|_\Lambda
&\leq C\,\Big(|\rho-\rho_{H_\Gamma}|_{H^{1/2}(\btriH)} +|(\widetilde T-\widetilde T_h)f|_{H^{1/2}(\btriH)}   +E(\tlambda)\Big),
\\
|u-u_h|_{H_\A^1(\triH)}&\le
|\lambda-\lambda_{H_\Lambda}|_\Lambda
+|(T-T_h)\tlambda + (\widetilde T-\widetilde T_h)f|_{H_\A^1(\triH)}.
\end{aligned}
\end{equation}
Moreover, the following weak continuity holds: 
\begin{equation}\label{e:wc}
\langle\mu_{H_\Lambda},u_h-\rho_{H_\Gamma}\rangle_\btriH=0
\quad\text{for all }\mu_{H_\Lambda}\in\Lambda_{H_\Lambda},
\end{equation}
and the discrete flux $\lambda_{H_\Lambda}$ respects the local equilibrium constraint
\begin{equation}\label{e:mass}
\int_{\dtau} \lambda_{H_\Lambda} \,d\xx = \int_\tau f \,d\xx\quad\text{for all }\tau\in\triH.
\end{equation}
\end{theorem}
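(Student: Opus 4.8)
The plan is to establish the four assertions in turn, beginning with the pressure-trace bound \eqref{e:rho0} by a Strang-type argument built on the coercivity and continuity of $g_h$ from Theorem~\ref{l:coerch}, then propagating it through the algebraic representations \eqref{disc-sol}, \eqref{ul-dec} to reach \eqref{e:lambdaest}, and finally checking \eqref{e:wc} and \eqref{e:mass} directly from the defining identity \eqref{e:Ghdef} and the zero-mean structure of $\tVh$ and $\tL_{H_{\Lambda}}$. For \eqref{e:rho0}, fix $\phi_{H_{\Gamma}}\in\Gamma_{H_{\Gamma}}$ and set $e:=\phi_{H_{\Gamma}}-\rho_{H_{\Gamma}}\in\Gamma_{H_{\Gamma}}$. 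Since $e$ is admissible in both \eqref{e:mhhm_d} and (as an element of $H_0^{1/2}(\btriH)$) \eqref{e:mhhm}, subtracting the two lets me write $g_h(e,e)=g_h(\phi_{H_{\Gamma}}-\rho,e)+[g_h(\rho,e)-g(\rho,e)]+[g(\widetilde T f,e)-g_h(\widetilde T_h f,e)]$. The decisive step is that the two bracketed consistency defects recombine: expressed through $G$ and $G_h$ they equal $-\langle(G-G_h)(\rho-\widetilde T f),e\rangle_{\btriH}+\langle G_h(\widetilde T-\widetilde T_h)f,e\rangle_{\btriH}$. Because $\tlambda=G(\rho-\widetilde T f)$ by \eqref{tlam}, Lemma~\ref{l:strang} with $\phi=\rho-\widetilde T f$ bounds the first by $E(\tlambda)\,|e|_{H^{1/2}(\btriH)}$, while \eqref{bGh} bounds the second by $\beta_{\max}^2|(\widetilde T-\widetilde T_h)f|_{H^{1/2}(\btriH)}|e|_{H^{1/2}(\btriH)}$, and continuity of $g_h$ controls $g_h(\phi_{H_{\Gamma}}-\rho,e)$. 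Dividing by $|e|_{H^{1/2}(\btriH)}$, invoking coercivity, and then using the triangle inequality and the infimum over $\phi_{H_{\Gamma}}$ yields \eqref{e:rho0}.

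For the flux bound, $\lambda-\lambda_{H_{\Lambda}}=G(\rho-\widetilde T f)-G_h(\rho_{H_{\Gamma}}-\widetilde T_h f)$ since the $\lambda^0$ parts cancel, which I split as $(G-G_h)(\rho-\widetilde T f)+G_h(\rho-\rho_{H_{\Gamma}})-G_h(\widetilde T-\widetilde T_h)f$; bounding the first piece by $E(\tlambda)$ via Lemma~\ref{l:strang} and the remaining two by $\beta_{\max}^2$ times the corresponding $H^{1/2}$-norms via \eqref{bGh} gives the first line of \eqref{e:lambdaest}. For the primal bound, $u^0$ and $u_h^0$ are piecewise constant and hence invisible to $|\cdot|_{H_\A^1(\triH)}$, so only $\widetilde u-\widetilde u_h$ contributes, with $\widetilde u_h:=u_h-u_h^0$; writing $\widetilde u-\widetilde u_h=(T-T_h)\tlambda+T_h(\tlambda-\tlambda_{H_{\Lambda}})+(\widetilde T-\widetilde T_h)f$ with $\tlambda_{H_{\Lambda}}:=G_h(\rho_{H_{\Gamma}}-\widetilde T_h f)=\lambda_{H_{\Lambda}}-\lambda^0$, using \eqref{bTh} to get $|T_h(\tlambda-\tlambda_{H_{\Lambda}})|_{H_\A^1(\triH)}\le|\tlambda-\tlambda_{H_{\Lambda}}|_\Lambda=|\lambda-\lambda_{H_{\Lambda}}|_\Lambda$, and applying the triangle inequality delivers the second line of \eqref{e:lambdaest}.

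For \eqref{e:wc} I split $\mu_{H_{\Lambda}}=\mu^0+\tmu_{H_{\Lambda}}$ into its piecewise-constant part $\mu^0\in\Lambda^0\subset\Lambda_{H_{\Lambda}}$ and its zero-mean part $\tmu_{H_{\Lambda}}\in\tL_{H_{\Lambda}}$. Against $\mu^0$ only element averages survive, and since every term of $u_h$ other than $u_h^0$ lies in $\tVh$ and has zero mean on each $\dtau$, while $u_h^0$ reproduces the average of $\rho_{H_{\Gamma}}$ by \eqref{u0h}, the pairing vanishes; against $\tmu_{H_{\Lambda}}$ the identity \eqref{e:Ghdef} gives $\langle\tmu_{H_{\Lambda}},T_hG_h\phi\rangle_{\dtau}=\langle\tmu_{H_{\Lambda}},\phi\rangle_{\dtau}$, so the $T_hG_h\rho_{H_{\Gamma}}$ term exactly matches $\rho_{H_{\Gamma}}$ and the $(I-T_hG_h)\widetilde T_h f$ term cancels, again leaving zero. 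For \eqref{e:mass}, $G_h$ maps into $\tL_{H_{\Lambda}}\subset\tL$, whose elements have zero mean on $\dtau$, so the whole $G_h$-contribution to $\lambda_{H_{\Lambda}}$ integrates to zero on $\dtau$ and $\int_{\dtau}\lambda_{H_{\Lambda}}=\int_{\dtau}\lambda^0=\int_\tau f$ by the definition of $\lambda^0$ in \eqref{u0l0}, equivalently the first line of \eqref{e:weak-e1.easy}.

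The main obstacle is the recombination in the pressure estimate: each defect $g_h(\rho,\cdot)-g(\rho,\cdot)$ and $g(\widetilde T f,\cdot)-g_h(\widetilde T_h f,\cdot)$ is individually only first order in the discretization, and they must be seen to assemble into the single operator error $(G-G_h)(\rho-\widetilde T f)$ evaluated at precisely the argument whose $G$-image is $\tlambda$; only then does Lemma~\ref{l:strang} produce the term $E(\tlambda)$ appearing in \eqref{e:rho0}, rather than two separately uncontrolled contributions, with the leftover $G_h(\widetilde T-\widetilde T_h)f$ peeled off and absorbed through \eqref{bGh}.
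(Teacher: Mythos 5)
Your proposal is correct and follows essentially the same route as the paper's proof: the same coercivity-plus-Galerkin-orthogonality argument for \eqref{e:rho0} with the crucial recombination of the two consistency defects into $(G-G_h)(\rho-\widetilde Tf)$ handled by Lemma~\ref{l:strang} and \eqref{bGh}, the same splitting $(G-G_h)(\rho-\widetilde Tf)+G_h(\rho-\rho_{H_\Gamma})-G_h(\widetilde T-\widetilde T_h)f$ for the flux bound, the same decomposition $T_h(\tlambda-\tlambda_{H_\Lambda})+(T-T_h)\tlambda+(\widetilde T-\widetilde T_h)f$ with \eqref{bTh} for the primal bound, and the same $\Lambda^0\oplus\tL_{H_\Lambda}$ split with \eqref{e:Ghdef} and \eqref{u0h} for \eqref{e:wc} and \eqref{e:mass}.
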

\begin{proof}
We gather from the triangle inequality that, for an arbitrary $\phi_{H_{\Gamma}}\in\Gamma_{H_{\Gamma}}$, 
\begin{equation}\label{e:rho1}
|\rho-\rho_{H_\Gamma}|_{H^{1/2}(\btriH)}
\le |\rho-\phi_{H_\Gamma}|_{H^{1/2}(\btriH)}+|\phi_{H_\Gamma}-\rho_{H_\Gamma}|_{H^{1/2}(\btriH)}, 
\end{equation}
and from Theorem~\ref{l:coerch},~\eqref{e:mhhm}, Lemma~\eqref{l:strang} and~\eqref{bGh},
\begin{equation*}
\begin{aligned}
(\alpha_{max}&\beta_{max})^{-2}|\phi_{H_{\Gamma}}-\rho_{H_{\Gamma}}|_{H^{1/2}(\btriH)}^2 \\
&\leq g_h(\phi_{H_{\Gamma}}-\rho_{H_{\Gamma}},\phi_{H_{\Gamma}}-\rho_{H_{\Gamma}}) \\
&= g_h(\phi_{H_{\Gamma}}-\rho,\phi_{H_{\Gamma}}-\rho_{H_{\Gamma}}) - g_h(\rho_{H_{\Gamma}},\phi_{H_{\Gamma}}-\rho_{H_{\Gamma}})+  g_h(\rho,\phi_{H_{\Gamma}}-\rho_{H_{\Gamma}})\\
&\quad + g(\rho,\phi_{H_{\Gamma}}-\rho_{H_{\Gamma}})- g(\rho,\phi_{H_{\Gamma}}-\rho_{H_{\Gamma}})
\\
&=\langle G_h(\phi_{H_{\Gamma}}-\rho),\phi_{H_{\Gamma}}-\rho_{H_{\Gamma}}\rangle_\btriH
+\langle (G_h-G)\rho,\phi_{H_{\Gamma}}-\rho_{H_{\Gamma}}\rangle_\btriH \\
&\quad + \langle (G\widetilde T  - G_h\widetilde T_h) f,\phi_{H_{\Gamma}}-\rho_{H_{\Gamma}}\rangle_\btriH\\ 
&=\langle G_h(\phi_{H_{\Gamma}}-\rho),\phi_{H_{\Gamma}}-\rho_{H_{\Gamma}}\rangle_\btriH
+\langle (G_h-G)\rho,\phi_{H_{\Gamma}}-\rho_{H_{\Gamma}}\rangle_\btriH \\ 
&\quad + \langle (G  - G_h) \widetilde T f,\phi_{H_{\Gamma}}-\rho_{H_{\Gamma}}\rangle_\btriH+ \langle G_h( \widetilde T -  \widetilde T_h)f,\phi_{H_{\Gamma}}-\rho_{H_{\Gamma}}\rangle_\btriH \\
&\leq   \Big[\beta_{max}^{2}\Big( |\rho-\phi_{H_{\Gamma}}|_{H^{1/2}(\btriH)} + \vert (\widetilde T-\widetilde T_h) f \vert_{H^{1/2}(\btriH)}\Big) + E(G(\rho- \widetilde T f))  \Big] | \phi_{H_{\Gamma}}-\rho_{H_{\Gamma}}|_{H^{1/2}(\btriH)} \\
\end{aligned}
\end{equation*}
and the  result~\eqref{e:rho0}  follows from \eqref{e:rho1} and the above inequality recalling that $\tlambda = G(\rho- \widetilde T f)$.

Also, from Lemma~\ref{l:strang} and~\eqref{bGh} 
\begin{equation}
\label{e:rho3}
\begin{gathered}
|(G\rho-G_h\rho_{H_{\Lambda}}-(G\widetilde T-G_h\widetilde T_h)f|_\Lambda
\le|(G-G_h)(\rho-\widetilde T f)|_\Lambda+|G_h(\widetilde T_h f-\widetilde T f) |_\Lambda + |G_h(\rho-\rho_{H_{\Lambda}}) |_\Lambda
\\
\le E(\tlambda) + \beta_{\max}^2\Big(|(\widetilde T-\widetilde T_h)f |_{H^{1/2}(\btriH)} + |\rho-\rho_{H_{\Lambda}} |_{H^{1/2}(\btriH)}\Big),
\end{gathered}
\end{equation}
and the first estimate in~\eqref{e:lambdaest} follows since 
\begin{gather}
\lambda-\lambda_{H_{\Lambda}}
=\tlambda-\tlambda_{H_{\Lambda}}= G\rho-G_h\rho_{H_{\Lambda}}- (G\widetilde T - G_h\widetilde T_h)f . \label{elambda}
\end{gather}

Next, using 
\begin{gather}
u-u_h=u^0-u_h^0+\widetilde u-\widetilde u_h \label{euh},
\end{gather}
the second estimate in~\eqref{e:lambdaest} follows from 
\begin{equation*}
\tilde{u}-\tilde{u}_h=T\tlambda-T_h\tlambda_{H_{\Lambda}}+\widetilde{T}f-\widetilde{T}_h f=T_h(\tlambda-\tlambda_{H_{\Lambda}})+(T-T_h)\tlambda+(\widetilde{T}-\widetilde{T}_h)f
\end{equation*}
and using the stability of  $T_h$ in \eqref{bTh} to get
\begin{equation*}
\vert T_h(\lambda-\lambda_{H_{\Lambda}})\vert_{H_\A^1(\triH)} = \vert T_h(\tlambda-\tlambda_{H_{\Lambda}})\vert_{H_\A^1(\triH)}
\leq\vert\tlambda-\tlambda_{H_{\Gamma}}\vert_{\Lambda}=\vert\lambda-\lambda_{H_{\Gamma}}\vert_{\Lambda}.
\end{equation*}
To show~\eqref{e:wc}, for $\tmu_{H_{\Lambda}}\in\tL_{H_{\Lambda}}$ and using the definition of the mapping $G_h$ in \eqref{e:Ghdef}, we get 
\[
\langle\tmu_{H_\Lambda},u_h\rangle_\btriH
=\langle\tmu_{H_\Lambda},\widetilde u_h\rangle_\btriH
=\bigl\langle\tmu_{H_\Lambda},T_hG_h\rho_{H_{\Gamma}}+(I-T_hG_h)\widetilde T_hf\bigr\rangle_\btriH
=\langle\tmu_{H_\Lambda},\rho_{H_\Gamma}\rangle_\btriH.
\]
Next, for $\mu_{H_{\Lambda}}\in\Lambda^0$ it  follows from~\eqref{u0h}, 
\[
\langle\mu_{H_\Lambda},u_h\rangle_\btriH
=\langle\mu_{H_\Lambda},u^0_h\rangle_\btriH
=\langle\mu_{H_\Lambda},\rho_{H_\Gamma}\rangle_\btriH,
\]
and then~\eqref{e:wc} holds. Finally, the equilibrium condition~\eqref{e:mass} follows from~\eqref{u0l0} since
\[
\int_{\dtau} \lambda_{H_\Lambda} \,d\xx = \int_{\dtau} \lambda^{0} \,d\xx = \int_\tau f \,d\xx\quad\text{for all }\tau\in\triH.
\]
\end{proof}
%
%
\section{Compatible finite element spaces}
\label{sec:compatible}

This section provides examples of compatible two dimensional finite element spaces satisfying  Assumptions $A$ and $B$. To this end, we introduce partitions of the elements and faces, which
could be different for different elements. 

\subsection{Element and face partitions}
\label{ssec:elefacepart}
Consider the following \color{black} family of regular partitions: \color{black}
\begin{itemize}
\item $\triho(\tau)$ (baseline partition): conforming triangulation of $\tau\in\triH$;
\item $\trih(\tau)$: conforming triangulation of $\tau\in\triH$, with diameter $h$, obtained by refining $\triho(\tau)$;
\item $\triho$: union of all $\triho(\tau)$ yielding a triangulation of $\Omega$ (not globally conforming in general)
  \[
  \triho:=\cup_{\tau\in\triH}\triho(\tau);
  \]
\item $\trih$: union of all $\trih(\tau)$ yielding a triangulation of $\Omega$ (not globally conforming in general)
  \[
  \trih:=\cup_{\tau\in\triH}\trih(\tau)\quad \text{with diameter } h;
  \]
\item $\EE(\dtau)$: set of faces associated with $\tau\in\triH$
  \[
  \EE(\dtau):=\{F\subset\dtau:\,F\text{ is a face of }\tau\};
  \]
\item $\EE$: set of faces associated with the partition $\triH$
  \[
  \EE:=\cup_{\tau\in\triH}\EE(\dtau);
  \]
   \color{black}
 \item  $\EE_{H_{\Gamma}}(\dtau)$: conforming partition of $\dtau$, which is assumed locally
quasi-uniform, namely neighboring boundary edges have comparable
diameters;
 \color{black}
\item $\EE_{H_{\Lambda_0}}(\dtau)$ (baseline partition): conforming partition of $\dtau$. Moreover, if two elements share a face, then the corresponding face triangulation is identical;
\item  $\EE_{H_{\Lambda}}(\dtau)$: conforming partition of $\dtau$, obtained by refining $\EE_{H_{\Lambda_0}}(\dtau)$;
\item $\EE_{H_{\Lambda_0}}$ (baseline partition): union of all baseline face partitions
  \[
  \EE_{H_{\Lambda_0}}:=\cup_{\tau\in\triH}\EE_{H_{\Lambda_0}}(\dtau);
  \]
\item $\EE_{H_{\Lambda}}$: union of all $\EE_{H_{\Lambda}}(\dtau)$ yielding  a partition of $ \EE$
  \[
  \EE_{H_{\Lambda}}:=\cup_{\tau\in\triH}\EE_{H_{\Lambda}}(\dtau)  \quad\text{with diameter } H_{\Lambda};
  \]
\item \color{black} $\EE_{H_{\Gamma}}$: union of all $\EE_{H_{\Gamma}}(\dtau)$ yielding  a partition of $\EE$
  \[
  \EE_{H_{\Gamma}}:=\cup_{\tau\in\triH}\EE_{H_{\Gamma}}(\dtau)\quad\text{with diameter } H_{\Gamma}. 
  \color{black}
   \]
\end{itemize}

We assume that the triangulations of faces and elements are general, but related to each other. Specifically, given $\tau\in\triH$, the baseline element \color{black}and edge partitions $\triho(\tau)$ and $\EE_{H_{\Lambda_0}}(\dtau)$, respectively, are such that they satisfy: \color{black}
~\\
\begin{itemize}
\item[(M1)] Given $\tau\in\triH$ and $F \in \EE_{H_{\Lambda}}(\dtau)$, there exist  two elements $\kappa_1, \kappa_2 \in \triho(\tau)$ such that $(\partial\kappa_1 \cap \dtau) \cup (\partial\kappa_2 \cap \dtau) = F$;
\color{black}
\item[(M2)] Given  $F \in \EE_{H_{\Gamma}}(\dtau)$, there exist  two elements $\frak{f}_1, \frak{f}_1 \in \EE_{H_{\Lambda_0}}(\dtau)$ such that $\frak{f}_1 \cup \frak{f}_2 = F$, $\frak{f}_1 \cap \frak{f}_2$ has zero measure, and $|\frak f_j| \simeq |F|$, $j=1,2$.
\color{black}
\end{itemize}
~\\
Given $\tau\in\triH$, a practical way to construct these meshes is: 
\begin{enumerate}
\item  \color{black} Set $\EE_{H_{\Gamma}}(\dtau)$; \color{black}
\item  \color{black} Define  $\EE_{H_{\Lambda_0}}(\dtau)$ such condition $(M2)$ is valid, and $\EE_{H_{\Lambda}}(\dtau)$ as a refinement of $\EE_{H_{\Lambda_0}}(\dtau)$; \color{black}
\item Define $\triho(\tau)$ such that condition $(M1)$ is valid, and $\trih(\tau)$ as a refinement of $\triho(\tau)$.
\end{enumerate}


We next present some stable finite dimensional spaces.
%
%
\subsection{The  $\PP_{k+1}(F)\times \PP_k(F) \times \mathbb{P}_{k+1} (\tau)$ element}
\label{ssec:ppp}
Let $k\geq0$ and $\Gamma_{H_\Gamma}$ be the space of continuous piecewise polynomial functions on faces of degree up to $k+1$, i.e.,
\begin{equation}\label{GammaFEM}
\Gamma_{H_\Gamma}:=\left\{\xi\in H_0^{1/2}(\btriH)\,: \, \xi |_{F}\in\PP_{k+1}(F),\, F\in\EE _{H_{\Gamma}} \right\},
\end{equation}
and $\Lambda_{H_{\Lambda}},\,\Lambda_{H_{\Lambda_0}}$ and $\tL_{H_{\Lambda}},\,\tL_{H_{\Lambda_0}}$ be the following spaces of discontinuous piecewise polynomial functions on faces of degree up to $k$ 
\begin{equation}\label{LambdaFEM}
  \begin{gathered}
  \Lambda_{H_{\Lambda}}:=\left\{\mu\in\Lambda:\,\mu |_{F} \in\PP_k(F),\, F\in \EE_{H_{\Lambda}} \right\},
  \qquad 
  \tL_{H_{\Lambda}}:= \Lambda_{H_{\Lambda}}\cap\tL, \\
  \Lambda_{H_{\Lambda_0}}:=\left\{ \mu\in\Lambda:\,\mu |_{F} \in\PP_k(F),\,  F\in \EE_{H_{\Lambda_0}} \right\},
  \qquad 
  \tL_{H_{\Lambda_0}}:= \Lambda_{H_{\Lambda_0}}\cap\tL .
  \end{gathered}
\end{equation}

In addition, we chose the following functional spaces of degree up to $k+1$ inside the elements i.e., 
\begin{equation}\label{Vh0-one}
  \begin{gathered}
V_{h_0}:=\left\{v_{h}\in H^1(\triH):\,v_{h}|_\tau\in\mathbb{P}_{k+1}(\tau),\,\tau\in\triho\right\}, 
\qquad
\tVhz:= V_{h_0}\cap\tH^1(\triH),
\\
V_{h}:=\left\{v_{h}\in H^1(\triH):\,v_{h}|_\tau\in\mathbb{P}_{k+1}(\tau),\,\tau\in\trih\right\}, 
\qquad
\tVh:=V_h\cap\tH^{1}(\triH).   
\end{gathered}
\end{equation}

\color{black}
We are ready to present the main result of this section. 

\begin{theorem}[Well-posedness and best approximation]
\label{wellposedness-mh2m}
Assume that $(M1)$ and \color{black} $(M2)$  hold. Let $\Gamma_{H_\Gamma}$ be given in \eqref{GammaFEM}, and $\Lambda_{H_\Lambda}  $ and $V_{h_0}$  be such that $\Lambda_{H_{\Lambda_0}} \subset\Lambda_{H_\Lambda}  $ and $V_{h_0}\subset V_h$, where $\Lambda_{H_{\Lambda_0}}$ is given in \eqref{LambdaFEM} and $ V_{h_0}$  in \eqref{Vh0-one}.  Then,  the MH$^2$M \eqref{e:mhhmh} is well-posed and~\eqref{disc-sol} satisfies the estimates in Theorem~\ref{t:errorEstimates}.
\end{theorem}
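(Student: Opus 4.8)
The plan is to reduce the statement to Theorems~\ref{l:coerch} and~\ref{t:errorEstimates} by verifying that the chosen triple $(\Gamma_{H_\Gamma},\Lambda_{H_\Lambda},V_h)$ from~\eqref{GammaFEM}--\eqref{Vh0-one} satisfies Assumptions $A$ and $B$ with constants $\beta_\tau$, $\alpha_\tau$ independent of the mesh parameters. Once both assumptions are in place, well-posedness of~\eqref{e:mhhmh} is immediate from Theorem~\ref{l:coerch}, and the error bounds~\eqref{e:rho0}--\eqref{e:mass} are exactly the conclusion of Theorem~\ref{t:errorEstimates}. Thus the entire argument is the verification of the two compatibility conditions for these specific spaces.

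For Assumption $A$, the idea is to produce the Fortin operator required by Remark~\ref{sA}. First I would invoke condition $(M1)$ together with the construction of~\cite{BarJaiParVal20}*{Lemma 2} to obtain $\Pi_V:H^1(\triH)\to V_{h_0}$ satisfying the moment-preservation and $H^1$-stability estimates in~\eqref{piLG}. Setting $\pi_V:=\Pi_V$ then gives the first identity of~\eqref{piL} directly. The remaining step is to upgrade the $H^1(\tau)$-stability of $\Pi_V$ to the $H^{1/2}(\dtau)$-stability demanded by~\eqref{piL}: restricting to $\tH^1(\triH)$ and applying the datum to a minimal ($\A$-harmonic) extension, I would chain the trace bound $|\cdot|_{H^{1/2}(\dtau)}\le|\cdot|_{H^1_\A(\tau)}$, the norm equivalence with $a_{\min},a_{\max}$, and the local Poincar\'e inequality, obtaining $\beta_\tau=C_\tau a_{\max}^{1/2}(1+C_P H_\Lambda)/a_{\min}^{1/2}$, which is independent of $h$. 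Assumption $A$ then follows from Remark~\ref{sA}, and it persists for $V_h\supset V_{h_0}$ by the closing observation of that remark.

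For Assumption $B$, I would appeal to Lemma~\ref{comp-par}, whose hypotheses are the existence of the same $\pi_V$ (already established) and the inclusion $\Gamma_{H_\Gamma}(\dtau)\cap\tH^{1/2}(\dtau)\subset\tVhz(\tau)|_\dtau$ for every $\tau\in\triH$. The crux is to verify this inclusion from the mesh hierarchy of Section~\ref{ssec:elefacepart}: since $\Gamma_{H_\Gamma}$ consists of continuous piecewise polynomials of degree $\le k+1$ on $\EE_{H_\Gamma}=\EE_{H_{\Lambda_0}}$, while $V_{h_0}$ carries degree-$(k+1)$ polynomials on $\triho(\tau)$, condition $(M1)$ ensures that each zero-mean face trace in $\Gamma_{H_\Gamma}(\dtau)$ is realized as the boundary trace of some function in $\tVhz(\tau)$. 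Lemma~\ref{comp-par} then yields Assumption $B$ with $\alpha_\tau=\beta_\tau$, completing the input to Theorems~\ref{l:coerch} and~\ref{t:errorEstimates}.

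I expect the inclusion $\Gamma_{H_\Gamma}(\dtau)\cap\tH^{1/2}(\dtau)\subset\tVhz(\tau)|_\dtau$ to be the main obstacle, since it is the one place where the geometric compatibility $(M1)$ between the face partition $\EE_{H_\Gamma}$ and the element partition $\triho(\tau)$ is genuinely used: one must check that the polynomial degrees and the refinement relation align so that every admissible boundary datum is an interior finite-element trace, and that this holds uniformly over the (possibly nonmatching) local meshes. The trace/Poincar\'e chain fixing $\beta_\tau$ and the final appeal to the abstract theorems are then routine.
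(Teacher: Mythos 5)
Your proposal is correct and follows the paper's own route essentially step for step: the Fortin operator $\Pi_V$ of~\cite{BarJaiParVal20}*{Lemma 2} obtained under $(M1)$, the trace/Poincar\'e chain yielding $\beta_\tau = C_\tau\,a_{\max}^{1/2}(1+C_P H_\Lambda)/a_{\min}^{1/2}$ and Assumption $A$ via Remark~\ref{sA}, the inclusion $\Gamma_{H_\Gamma}(\dtau)\cap\tH^{1/2}(\dtau)\subset\tVhz(\tau)|_\dtau$ feeding Lemma~\ref{comp-par} for Assumption $B$, and the final appeal to Theorems~\ref{l:coerch} and~\ref{t:errorEstimates}. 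The only refinement worth noting is that you make explicit (via the $\A$-harmonic minimal extension) the passage from the bound in $|v|_{H^1_\A(\tau)}$ to the bound in $|v|_{H^{1/2}(\dtau)}$, a step the paper leaves implicit.
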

\begin{proof}
\color{black} First, note that from assumption (M1) and the definition of the finite-dimensional spaces \eqref{LambdaFEM} and \eqref{Vh0-one},    there exists a Fortin operator $\Pi_V:H^1(\triH)\rightarrow V_{h_0}$ (see~\cite[Lemma 2]{BarJaiParVal20}) \color{black} satisfying 
\begin{equation}
\begin{gathered}
\label{piLG}
\int_{\dtau} \mu\, \Pi_V(v) \,d\xx = \int_{\dtau} \mu\, v \,d\xx\quad\text{for all }\mu \in \Lambda_{H_{\Lambda}}, \\      
\|\Pi_V(v)\|_{H^1(\tau)} \leq C_\tau\, \|v\|_{H^1(\tau)} \quad\text{for all }v\in  V\text{ and }\tau\in\triH,
\end{gathered}
\end{equation}
where the positive constant $C_\tau$ is $H_\tau$-independent.
Then, we define $\pi_V(\cdot)$ as the operator $\Pi_V(\cdot)$ and the first equation in \eqref{piL} is valid. 
Also, for all $v\in \tH^1(\triH)$ and $\tau\in\triH$, from~\eqref{piLG} and the Poincar\'e inequality it holds that 
\begin{equation*}
\color{black}
\begin{aligned}
|\pi_V(v)|_{H^{1}_\A(\tau)} = |\Pi_V(v)|_{H^{1}_\A(\tau)} 
&\leq  a_{max}^{1/2} \|\Pi_V(v)\|_{H^{1}(\tau)}  \\
&\leq  C_\tau\, a_{max}^{1/2}  \|v\|_{H^{1}(\tau)}    \\
&  \leq C_\tau\,a_{max}^{1/2}(1+C_P H_\tau)  |v|_{H^{1}(\tau)}   \\
&\leq \frac{C_\tau\,a_{max}^{1/2}(1+C_P H_\tau) }{a_{min}^{1/2}}   |v |_{H^{1}_\A(\tau)},
\end{aligned}
\color{black}
\end{equation*}
and then  $|\pi_V(v)|_{H^{1}_\A(\dtau)} \leq \beta_\tau  |v |_{H^{1}_\A(\dtau)}$  with $\beta_\tau =  \frac{C_\tau\,a_{max}^{1/2}(1+C_P H_\tau) }{a_{min}^{1/2}}$. \color{black}  Assumption A follows from Remark~\ref{sA} by taking the mapping $\pi_V(\cdot) := \Pi_V(\cdot)$ defined above.  To establish Assumption~B, we apply Remark~\ref{sB} and make use of the Fortin operator $\pi_\Lambda(\cdot)$ satisfying~\eqref{piGB} (see Lemma \ref{l0:m2} in the Appendix). Since Assumptions~A and~B are satisfied, the result follows immediately from Theorems~\ref{l:coerch} and~\ref{t:errorEstimates}.
\end{proof}
\color{black}

%

\color{black}

\color{black}
\begin{remark}[Relaxing Assumptions $(M1)$  and $(M2)$ ]
\label{rem:relaxm1}
The mesh conditions $(M1)$ and $(M2)$ are   sufficient to establish the well-posedness of MH$^2$M \eqref{e:mhhmh}, but  are not necessary in general. For example, let $k\geq 0$ be even and consider the finite element 
\[
\PP_{k+1}(F)\times\PP_{k}(F)\times\mathbb{P}_{k+1}(\tau)
\] 
supported on partitions 
\[
\EE_{H_\Gamma }(\dtau)=\EE_{H_{\Lambda}}(\dtau)
\]
consisting of one element per face, and with $\triho(\tau)$ consisting of the single simplex $\tau$, for each  $\tau \in \triH$ (see Figure~\ref{f:representativeFEM} for an illustration).  Hence, adapting the arguments of \cite[Lemma~10]{RavTho77} to our setting, there exists a positive constant $C_\tau$, independent of mesh parameters, such that for all $\tmu\in \tL_{H_\Lambda}$
\begin{equation}
\label{wpRT}
\sup_{v\in H^1(\tau)} \frac{\langle \tmu, v \rangle_{\dtau}}{\|v\|_{H^1(\tau)}} \leq C_\tau \sup_{\tv_h\in \tVhz(\tau)} \frac{\langle \tmu, \tv_h \rangle_{\dtau}}{\|\tv_h\|_{H^1(\tau)}}.
\end{equation}
Moreover, observe that
\[
\Gamma_{H_\Gamma}(\partial\tau)=V_{h_0}(\tau)|_{\partial\tau}.
\]
Since the moment matrix corresponding to the pairing
\[
\langle \tmu,\tv_h\rangle_{\partial\tau},
\qquad
\tmu\in \widetilde\Lambda_{H_\Lambda}(\partial\tau),
\quad
\tv_h\in \tVhz(\tau)|_{\partial\tau},
\]
is square and nonsingular according to \eqref{wpRT}, Lemma~\ref{comp-par} guarantees Assumption~B. Therefore, all the statements of Theorem~\ref{wellposedness-mh2m} hold for this family of elements as well.   

In the case where $k$ is odd, condition~\eqref{wpRT} fails because the local kernel contains a Legendre polynomial of degree $k+1$. Nevertheless, when homogeneous Dirichlet boundary conditions are imposed on $\dO$, the interelement continuity constraints and the prescribed boundary values prevent these local kernel functions from giving rise to a nontrivial global function. Hence, the global kernel is trivial and a global inf-sup condition still holds. As a result, the discrete problem remains well posed despite the failure of the local condition.  However, we have not been able to establish the independence of the corresponding inf-sup constant with respect to $H_\tau$, although the numerical experiments suggest that this property holds.
\end{remark}
\color{black}

\color{black}
\begin{remark}[Comparison with the Marini--Brezzi three-field method]
The element $\PP_{1}(F)\times \PP_{0}(F)\times \mathbb{P}_{1}(\tau)$,
consisting of piecewise linear continuous functions for the trace variable and piecewise constant functions for the flux variable, was first introduced in the three-field finite element method~\cite{MR1262602}, where it was shown to be well posed and optimally convergent~\cite{MR1826573}. These properties were achieved by adding stabilization terms to the standard Galerkin discretization of the continuous three-field weak formulation~\eqref{e:weak-hybridH}. Interestingly, the MH$^2$M method~\eqref{e:mhhmh}--\eqref{u0h} is also stable and optimally convergent when using the same element $\PP_{1}(F)\times \PP_{0}(F)\times \mathbb{P}_{1}(\tau)$. However, in contrast to the three-field method, no additional stabilization terms are required. Instead, stability is obtained by assuming conditions $(M1)$ and $(M2)$ on the submeshes.
\end{remark}
\color{black}
 
\begin{figure}[h]
\begin{center}
\begin{tikzpicture}[scale=0.6]
\coordinate (A1) at (0,0);
\coordinate (A2) at (4,0);
\coordinate (A3) at (2,3);
\coordinate (A4) at (6,3);

\coordinate (B1) at (7,0);
\coordinate (B2) at (11,0);
\coordinate (B3) at (9,3);
\coordinate (B4) at (13,3);
%
\coordinate (L00) at (0,0.3);
\coordinate (L01) at (4,0.3);
\coordinate (L10) at (0,0.6);
\coordinate (L11) at (2,3.6);
\coordinate (L20) at (3.8,0.1);
\coordinate (L21) at (1.8,3.1);
\coordinate (F00) at (7,0.2);
\coordinate (F01) at (11,0.4);
\coordinate (F11) at (9,3.6);
\draw[fill=gray!20,  thick ] (A1) -- (A2) -- (A3) -- cycle ;
\draw[fill=gray!20,  thick ] (B1) -- (B2) -- (B3) -- cycle ;
\draw [red, thick ](L00) -- (L01);
\draw [red, thick ](L10) -- (L11);
\draw [red, thick ](L20) -- (L21);
\draw [blue, thick ](F00) -- (F01) -- (F11) -- cycle;
\end{tikzpicture}
\end{center}
\caption{A piecewise constant function representative of the space $\Lambda_{H_{\Lambda}}$ and a continuous piecewise linear function of the space $\Gamma_{H_{\Gamma}}$ with faces and element discretized with one element.\label{f:representativeFEM} }
\end{figure}

\subsection{The  $\PP_{k+1}(F)\times \PP_{k+1}(F) \times \mathbb{P}_{k+1} (\tau)$ element}\label{ssec:kkk}

Here we use a richer space to approximate the flux variable $\lambda$ than in the previous case. Nonetheless, the strategy of proving well-posedness and approximation properties for the MH$^2$M with the element $\PP_{k+1}(F)\times \PP_{k+1}(F) \times \mathbb {P}_{k+1} (\tau)$ also follows from the existence of \color{black} bounded mappings that satisfy~\eqref{piL} and~\eqref{piGB}.\color{black} To do so, we must consider two scenarios \color{black} regarding~\eqref{piL}: \color{black}
\begin{itemize}
\item First, if $k\geq 2$ then it is sufficient to assume (M1) to guarantee the existence of a mapping $\Pi_V(\cdot)$ that satisfies~\eqref{piLG} (c.f.~\cite[Section 3.2]{BarJaiParVal20}) and then Assumption A is satisfied using the same argument used for the element $\PP_{k+1}(F)\times \PP_{k}(F) \times \mathbb{P}_{ k+1 }(\tau)$;
\item On the other hand, when $k=0$ and $k=1$, the condition (M1) must be replaced by:
\begin{itemize}
\item[]\qquad Given $F \in \EE_{H_{\Lambda}}(\dtau)$, there are three elements $\kappa_1, \kappa_2, \kappa_3 \in \triho(\tau)$ such that $ (\partial\kappa_1 \cap \dtau) \cup (\partial\kappa_2 \cap \dtau) \cup (\partial\kappa_3 \cap \dtau) = F$.
\end{itemize}
 This condition guarantees the existence of $\Pi_V(\cdot)$ that satisfies~\eqref{piLG} (c.f.~\cite[Section 3.2]{BarJaiParVal20}).
 \end{itemize}
 
 \color{black}
 Note that Lemma~\ref{l0:m2} remains valid under the same Assumption~$(M2)$, since the space $\Lambda_{H_{\Lambda_0}}$ is richer than the one considered in the previous section. Consequently, the well-posedness and best approximation results established in Theorem~\ref{wellposedness-mh2m} are also valid using again the proof strategy used for the element $\PP_{k+1}(F)\times \PP_{k}(F) \times \mathbb{P}_{ k +1 } (\tau)$.
 
\begin{remark}[Avoiding condition $(M2)$]
Interestingly, if Assumption~$(M2)$ is replaced by the simpler choice
\[
\EE_{H_{\Lambda_0}}(\dtau)=\triho(\tau)|_{\dtau},
\]
then Lemma~\ref{l0:m2} remains valid, and hence so does Theorem~\ref{wellposedness-mh2m}.
\end{remark}
 \color{black}
 
\subsection{The one-level \color{black} $\PP_{k+1}(F)\times \PP_{k}(F)$ and $\PP_{k+1}(F)\times \PP_{k+1}(F)$\color{black}}

The method nomenclature indicates that we replace $V_{h_0}$ by $H^1(\triH)$ (i.e., $T_h = T$ and $\widetilde T _h =\widetilde  T$), which corresponds to assuming that the multiscale basis functions driven by the operators $T$ and $\widetilde  T$ have a close formula (see algorithm in Section \ref{sec:basis} for details). 
%
Notably, the one-level MH$^2$M corresponds to find $\rho_{H_{\Gamma}}^{one} \in \Gamma_{H_{\Gamma}}$ such that
\begin{equation}\label{e:mhhm-one}
\langle G_h\rho_{H_{\Gamma}}^{one},\xi\rangle_\btriH =-\langle\lambda^0,\xi\rangle_\btriH +\langle G_h\widetilde T f,\xi\rangle_\btriH
\quad\text{for all }\xi\in \Gamma_{H_{\Gamma}},
\end{equation}
where $\lambda^0$ is given in \eqref{u0l0}. The approximate solution $u_h^{one}$ and $\lambda_{h}^{one}$ are
\begin{equation}
\label{disc-sol-one}
u_h^{one} :=u_h^{one,0}+T G_h\rho_{H_{\Gamma}}^{one}+(I-T G_h)\widetilde T f \quad\text{and}\quad \lambda^{one}_{H_\Lambda} := \lambda^0 + G_h(\rho_{H_{\Gamma}}^{one}-\widetilde T f),
\end{equation}
where $u_h^{one,0}$ satisfies \eqref{u0h} using $\rho_{H_{\Gamma}}^{one}$ in place. The function $u_h^{one}$ is finite dimensional although the space spam by the image of  mappings $T$ and $\widetilde T$ are not expected to be polynomial. Recalling that in the one-level case  $T_h = T$, note that 
\begin{gather*}
|\mu |_{H^{-1/2}(\dtau)} =  \sup_{\tphi\in\tH^{1/2}(\dtau)}\frac{\langle\mu,\tphi\rangle_\dtau}{|\tphi|_{H^{1/2}(\dtau)}}  =   \sup_{\substack{\tv\in\tH^{1}(\tau) \\ \tv|_{\dtau} = \tphi}}\frac{\int_{\tau} \A\nabla T\mu\cdot \nabla \tv \,d\xx}{|\tv|_{H^{1/2}(\dtau)}}  
=  |  T\mu |_{H^1_\A(\tau)},
\end{gather*}
  for all $\mu\in\widetilde\Lambda_{H_{\Lambda}}$, and then the Assumption $A$ is valid without the need for Condition $(M1)$.
 
 \color{black} 
 Since the pair of local spaces $\Lambda_{H_{\Lambda}}(\partial\tau)\times \Gamma_{H_{\Gamma}}(\partial\tau)$
coincides with that considered in the previous sections for the one-level MH$^2$M method, Assumption B also holds. Therefore, by Theorem~\ref{wellposedness-mh2m}, the one-level MH$^2$M formulation~\eqref{e:mhhm-one} is well posed, admitting the unique solution~\eqref{disc-sol-one}. Moreover, the best-approximation estimates of Theorem~\ref{t:errorEstimates} simplify to
 \color{black}
\begin{equation*}
\begin{gathered}
|\rho-\rho^{one}_{H_{\Gamma}}|_{H^{1/2}(\btriH)}
\le C\,\Big( \inf_{\phi_{H_{\Gamma}}\in\Gamma_{H_{\Gamma}}} |\rho-\phi_{H_{\Gamma}}|_{H^{1/2}(\btriH)}   +E(\tlambda)\Big), \\
|\lambda-\lambda^{one}_{H_\Lambda}|_\Lambda
\leq C\,\Big(|\rho-\rho^{one}_{H_\Gamma}|_{H^{1/2}(\btriH)}   +E(\tlambda)\Big)
\quad\text{and}\quad |u-u_h^{one}|_{H_\A^1(\triH)}\le
|\lambda-\lambda^{one}_{H_\Lambda}|_\Lambda,
\end{gathered}
\end{equation*}
where $E(\tlambda) = (1+\beta_{\max}^2)\,\inf_{\tmu_{H_{\Lambda}}\in\tL_{H_{\Lambda}}}
|\tlambda-\tmu_{H_{\Lambda}}|_\Lambda$.

\subsection{Convergence}
\label{ssec:conv}

We establish error estimates for the two-level MH$^2$M \eqref{e:mhhm_d} using the $\PP_{k+1}(F)\times \PP_k(F) \times \mathbb{P}_ {k+1} (\tau)$ element of Section \ref{ssec:ppp}. First, we recall some interpolation operators with optimal properties.
Let $w\in H^{k+2}(\triH)$ and $\A\nabla w \in H^{k+1}(\triH)\cap\Hdiv $. Then,  we have:
\begin{itemize}
\item[] There exist $\phi_{H_\Gamma} \in \Gamma_{H_\Gamma}$ and $\tmu_{H_\Lambda}\in\tL_{H_\Lambda}$ such that
\begin{gather} 
\label{interp-bound}
\vert\xi-\phi_{H_\Gamma}\vert_{H^{1/2}(\btriH)} \leq C\,H_{\Gamma}^{k+1} | w |_{H^{k+2}(\triH)}\quad\text{and}\quad \vert\tmu-\tmu_{H_\Lambda}\vert_{\Lambda} \leq C\,H_{\Lambda}^{k+1} | \A\nabla w |_{H^{k+1}(\triH)} 
\end{gather}
\end{itemize}
where $\xi:=w\,|_{\dtau}$ and $\mu\,|_{\dtau}:=\A\nabla  w\cdot \nn^\tau\,|_{\dtau}$ for all $\tau\in\triH$. For example, we may choose $\phi_{H_\Gamma}$ as  the standard Lagrange interpolator on $\PP_k(\triH)$~\cite{MR2050138} and  $\tmu_{H_\Lambda}$  given in~\cite[Lemma 3]{BarJaiParVal20}. In fact, from~\cite[Lemma 3]{BarJaiParVal20}, there exists $\mu_{H_\Lambda} \in \Lambda_{H_\Lambda}$ and a positive constant $C$, independent of mesh parameters, such that
\[
\sup_{v\in H^1(\triH)} \frac{\langle \mu-\mu_{H_\Lambda}\,, v \rangle_{\btriH}}{\|v\|_{H^1(\triH)}} \leq C\,H_{\Lambda}^{k+1} | \A\nabla w |_{H^{k+1}(\triH)}.
\]
Then, using the definition of the norm $\vert\cdot \vert_{\Lambda}$, Poincar\'e inequality and estimate above, it holds that 
\begin{equation*}
\begin{aligned}
\vert\tmu-\tmu_{H_\Lambda}\vert_{\Lambda} &=  \sup_{\tphi\in \tH^{1/2}(\btriH)} \frac{\langle \tmu-\tmu_{H_\Lambda}, \tphi \rangle_{\btriH}}{ |\tphi |_{H^{1/2}(\btriH)}} = \sup_{\substack{\tv\in \tH^{1}(\triH) \\ \tv|_{\btriH} = \tphi}}\frac{\langle\tmu-\tmu_{H_\Lambda}, \tv\rangle_\btriH}{| \tv|_{H^{1}(\triH)}}  \\
&=\sup_{\tv\in \tH^{1}(\triH)}\frac{\langle\mu-\mu_{H_\Lambda},  \tv\rangle_\btriH}{| \tv|_{H^{1}(\triH)}} \leq C\,\sup_{\tv\in \tH^{1}(\triH)}\frac{\langle\mu-\mu_{H_\Lambda},  \tv\rangle_\btriH}{\| \tv\|_{H^{1}(\triH)}}  \\
&\leq C\,\sup_{v\in H^{1}(\triH)}\frac{\langle\mu-\mu_{H_\Lambda},  v\rangle_\btriH}{\| v\|_{H^{1}(\triH)}} \leq C\,H_{\Lambda}^{k+1} | \A\nabla w |_{H^{k+1}(\triH)}.
\end{aligned}
\end{equation*}
Also, we recall the approximation error associated with the Galerkin approximation at the second level
\begin{gather}
\vert(T-T_h)\tmu\vert_{H^{1}_{\A}(\triH)} \leq C\,h^{k+1} | T\tmu |_{H^{k+2}(\triH)}\text{ and }\vert(\widetilde T-\widetilde T_h)q \vert_{H^{1}_\A(\triH)} \leq C\,h^{k+1} | \widetilde T q |_{H^{k+2}(\triH)}\label{error-galer}
\end{gather}
where $q\in L^2(\OO)$ and $\tmu \in \tL$, under the assumption that $T\tmu$, $\widetilde Tq \in H^{k+2}(\triH)$. Therefore, using~\eqref{tlam} and setting $\tlambda\,|_\dtau=\A\nabla u\cdot\nn^\tau\,|_\dtau$ it holds that
\begin{equation}
\label{errorE}
\begin{aligned}
E(\tlambda) &= (1+\beta_{\max}^2)\,\inf_{\tmu_{H_{\Lambda}}\in\tL_{H_{\Lambda}}}
|\tlambda-\tmu_{H_{\Lambda}}|_\Lambda + \beta_{\max}^2|(T-T_h)\tlambda|_{H^{1/2}(\btriH)} \\
&\leq C\,\Big( H_{\Lambda}^{k+1}| \A\nabla u |_{H^{k+1}(\triH)}  + h^{k+1} | T\tlambda |_{H^{k+2}(\triH)} \Big) \\
&\leq C\,\Big[H_{\Lambda}^{k+1}|\A\nabla u|_{H^{k+1}(\triH)}
  +h^{k+1} \Big(| u |_{H^{k+2}(\triH)}+| \widetilde Tf |_{H^{k+2}(\triH)}\Big)\Big],
\end{aligned}
\end{equation}
where $E(\cdot)$ was defined in~\eqref{e:Edef}. 

Owing to the  interpolation estimates~\eqref{interp-bound} with $u$ replacing $w$ and~\eqref{error-galer} with $\tmu=\tlambda$ and $q=f$, we estimate   MH$^2$M  error using the $\PP_{k+1}(F)\times \PP_k(F) \times \mathbb{P}_{k+1} (\tau)$ element as follows.

\begin{theorem}
\label{conver}
Let  $(\rho,\lambda,u)$ be the exact solution of~\eqref{e:weak-hybridH} and $(\rho_{H_{\Gamma}},\lambda_{H_{\Lambda}},u_h)$ be  the MH$^2$M solution where $\rho_{H_{\Gamma}}\in \Lambda_{H_{\Lambda}}$ solves~\eqref{e:mhhmh}, and  $(u_h,\lambda_{H_{\Lambda}})\in V_h\times\Lambda_{H_{\Lambda}}$ is given in~\eqref{disc-sol}. Under the assumptions of Theorem~\ref{t:errorEstimates}, and assuming $u\in H^{k+2}(\triH)$, $\A\nabla u \in H^{k+1}(\triH)\cap \Hdiv$ and $\widetilde Tf \in H^{k+2}(\triH)$, with $k\geq 0$, it holds that 
\begin{gather*}
  \vert\rho-\rho_{H_{\Gamma}}\vert_{H^{1/2}(\btriH)}
  +\vert\lambda-\lambda_{H_{\Lambda}}\vert_{\Lambda}
  +\vert u-u_h\vert_{H^1_{\mathcal{A}}(\mathcal{T}_H)} 
\leq C \Big[ \Big(  H_\Gamma^{k+1}\,+ \, h^{k+1} \Big)\vert u \vert_{H^{k+2}(\triH)}
\\
		+\, H_{\Lambda}^{k+1}|\A\nabla u|_{H^{k+1}(\triH)} \,+ \,h^{k+1} | \widetilde Tf |_{H^{k+2}(\triH)}\Big].
\end{gather*}
\end{theorem}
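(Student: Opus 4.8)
The plan is to assemble the three best-approximation bounds of Theorem~\ref{t:errorEstimates} and replace each right-hand side quantity by its explicit rate, using the interpolation estimates~\eqref{interp-bound}, the second-level Galerkin estimates~\eqref{error-galer}, and the already-derived bound~\eqref{errorE} for $E(\tlambda)$. The starting observation is that the exact solution satisfies $\rho=u|_{\btriH}$ and $\tlambda\,|_\dtau=\A\nabla u\cdot\nn^\tau\,|_\dtau$ (from the hybridization discussion following~\eqref{e:weak-hybridH}), so the interpolation estimates~\eqref{interp-bound} apply verbatim with $w=u$. In particular $\inf_{\phi_{H_\Gamma}\in\Gamma_{H_\Gamma}}|\rho-\phi_{H_\Gamma}|_{H^{1/2}(\btriH)}\le C\,H_\Gamma^{k+1}|u|_{H^{k+2}(\triH)}$, and the regularity hypotheses $u\in H^{k+2}(\triH)$ and $\A\nabla u\in H^{k+1}(\triH)\cap\Hdiv$ are exactly what feed the two bounds in~\eqref{interp-bound}.

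First I would control the data term $|(\widetilde T-\widetilde T_h)f|_{H^{1/2}(\btriH)}$ appearing in both the $\rho$- and $\lambda$-estimates. Since any broken $H^1$ function is an admissible extension of its own trace on each $\dtau$, the local trace seminorm is dominated by the local energy seminorm, $|v|_{H^{1/2}(\dtau)}\le|v|_{H^1_\A(\tau)}$; summing over $\tau\in\triH$ gives $|(\widetilde T-\widetilde T_h)f|_{H^{1/2}(\btriH)}\le|(\widetilde T-\widetilde T_h)f|_{H^1_\A(\triH)}\le C\,h^{k+1}|\widetilde Tf|_{H^{k+2}(\triH)}$ by~\eqref{error-galer}. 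Inserting this, the $\Gamma_{H_\Gamma}$-interpolation bound, and~\eqref{errorE} into the first estimate of Theorem~\ref{t:errorEstimates} yields
\[
|\rho-\rho_{H_\Gamma}|_{H^{1/2}(\btriH)}\le C\Big[(H_\Gamma^{k+1}+h^{k+1})|u|_{H^{k+2}(\triH)}+H_\Lambda^{k+1}|\A\nabla u|_{H^{k+1}(\triH)}+h^{k+1}|\widetilde Tf|_{H^{k+2}(\triH)}\Big].
\]
The identical three ingredients, carried through the second estimate of Theorem~\ref{t:errorEstimates}, give the same rate for $|\lambda-\lambda_{H_\Lambda}|_\Lambda$.

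For the pressure error I would invoke the third estimate of Theorem~\ref{t:errorEstimates}, reducing $|u-u_h|_{H^1_\A(\triH)}$ to $|\lambda-\lambda_{H_\Lambda}|_\Lambda$ plus $|(T-T_h)\tlambda+(\widetilde T-\widetilde T_h)f|_{H^1_\A(\triH)}$; the latter is bounded termwise by~\eqref{error-galer}, and the factor $|T\tlambda|_{H^{k+2}(\triH)}$ is replaced using $T\tlambda=\widetilde u-\widetilde Tf$ from~\eqref{tlam} together with $\widetilde u=u-u^0$, where the piecewise-constant $u^0$ contributes nothing to the $H^{k+2}$ seminorm for $k\ge0$, whence $|T\tlambda|_{H^{k+2}(\triH)}\le|u|_{H^{k+2}(\triH)}+|\widetilde Tf|_{H^{k+2}(\triH)}$. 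Summing the three displayed bounds and collecting terms produces the stated estimate. I do not expect an essential obstacle here: the substantive analytic work is already contained in Theorem~\ref{t:errorEstimates} and in~\eqref{errorE}, so the only delicate points are the bookkeeping that $\rho$, $\tlambda$ and $T\tlambda$ are all expressible through $u$ (so the regularity of $u$ transfers to each interpolation target) and the trace-seminorm domination $|\cdot|_{H^{1/2}(\btriH)}\le|\cdot|_{H^1_\A(\triH)}$ invoked for the data term.
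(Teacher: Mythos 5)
Your proposal is correct and follows exactly the paper's route: the paper's own proof is the one-line assembly of Theorem~\ref{t:errorEstimates} with the estimates~\eqref{interp-bound}--\eqref{errorE}, which is precisely what you carry out. The details you make explicit (the trace-seminorm domination $|\cdot|_{H^{1/2}(\dtau)}\le|\cdot|_{H^1_\A(\tau)}$ for the data term, and writing $T\tlambda=\widetilde u-\widetilde Tf$ so that $|T\tlambda|_{H^{k+2}(\triH)}\le|u|_{H^{k+2}(\triH)}+|\widetilde Tf|_{H^{k+2}(\triH)}$) are the same steps the paper uses implicitly in deriving~\eqref{errorE}.
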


\begin{proof}
The result is a direct consequence of Theorem~\ref{t:errorEstimates} and estimates~\eqref{interp-bound}-\eqref{errorE}.
\end{proof}
\begin{remark}[The $\PP_{k+1}(F)\times \PP_{k+1}(F) \times \mathbb{P}_{k+1} (\tau)$ case]
\label{rem:kkk}
Under the assumptions of Theorem~\ref{conver} with the additional regularity $\A\nabla u\in H^{k+2}(\triH)$ and following the arguments in the proof of Theorem~\ref{conver}, the  MH$^2$M solution with the element   $\PP_{k+1}(F)\times \PP_{k+1}(F) \times \mathbb{P}_{k+1} (\tau)$ satisfies
\begin{multline*}
\vert\rho-\rho_{H_{\Gamma}}\vert_{H^{1/2}(\btriH)}+\vert\lambda-\lambda_{H_{\Lambda}}\vert_{\Lambda}+ \vert u-u_h\vert_{H^1_{\mathcal{A}}(\mathcal{T}_H)}
\\
\leq C\Big[\Big(H_{\Gamma}^{k+1}+h^{k+1}\Big)|u|_{H^{k+2}(\triH)}+H_{\Lambda}^{k+2}|\A\nabla u|_{H^{k+2}(\triH)}+h^{k+1} | \widetilde Tf |_{H^{k+2}(\triH)}\Big].
\end{multline*}
Note that, since $H_{\Lambda} \leq H_{\Gamma}$ the leading error is $O(H_{\Gamma}^{k+1}+ h^{k+1})$ showing the element $\PP_{k+1} ( F)\times \PP_{k+1}(F) \times \mathbb{P}_{k+1} (\tau)$ provides no gain over the cheaper element $\PP_{k+1}( F) \times \PP_{k}(F) \times \mathbb{P}_{k+1} (\tau)$.
\end{remark}
\begin{remark}[The one-level MH$^2$M case]
Under the assumptions of Theorem~\ref{conver}, that $\widetilde V_h=\widetilde H^{1}(\mathcal{T}_H)$, $T_h=T$ and $\widetilde T_h=\widetilde T$, the error estimate in Theorem~\ref{conver} becomes
\begin{equation*}	\vert\rho-\rho_{H_{\Gamma}}^{one}\vert_{H^{1/2}(\btriH)}+\vert\lambda-\lambda_{H_{\Lambda}}^{one}\vert_{\Lambda}+ \vert u-u_h^{one}\vert_{H^1_{\mathcal{A}}(\mathcal{T}_H)} 
\leq C\Big[H_{\Gamma}^{k+1}\vert u \vert_{H^{k+2}(\triH)} + H_{\Lambda}^{k+1}|\A\nabla u|_{H^{k+1}(\triH)}  \Big]
\end{equation*}
when we use the M$H^2$M with the element \color{black} $\PP_{k+1}(F)\times\PP_{k}(F)$, \color{black} and 
\begin{equation*}	\vert\rho-\rho_{H_\Gamma}^{one}\vert_{H^{1/2}(\btriH)}+\vert\lambda-\lambda_{H_{\Lambda}}^{one}\vert_{\Lambda}
  + \vert u-u_h^{one}\vert_{H^1_{\mathcal{A}}(\mathcal{T}_H)} 
\leq C \Big(H_\Gamma^{k+1}\vert u \vert_{H^{k+2}(\triH)} + H_{\Lambda}^{k+2}|\A\nabla u|_{H^{k+2}(\triH)} \Big),
\end{equation*}
with the element \color{black} $\PP_{k+1}(F)\times \PP_{k+1}(F)$. \color{black}
\end{remark}



\section{Multiscale basis, degrees of freedom and algorithm}
\label{sec:basis}

Let  be $\{\rho_1,\dots,\rho_N\}$ be a basis for $\Gamma_{H_{\Gamma}}$ and  $\{\tlambda_1,\dots,\tlambda_{N^\prime}\}$ for $\tL_{H_{\Lambda}}$, where $N,\,N^\prime \in \NN^+$, and $\{\widetilde\psi_1,\dots,\widetilde\psi_{M}\}$ a base for $\widetilde V_h(\tau)$, with $\tau\in\triH$ and $M\in\NN^+$.  Let $\widetilde{\Gamma}_{H_{\Gamma}}:=\Gamma_{H_{\Gamma}}\cap\tH^{1/2}(\btriH)$ be the space of functions in $\Gamma_{H_{\Gamma}}$ with zero average in each element boundary.

The MH$^2$M computational algorithm writes: 
\begin{enumerate}[i)]
\item Compute $\lambda^0$ from~\eqref{u0l0};
\item Compute $T_h \tlambda_k =: \eta_k^{ms}$, for $k=1,\dots,N^\prime$, and $\widetilde T_h f =: \eta_f^{ms}$ from \eqref{e:Thdef}, i.e.,
\[
\sum_{j=1}^{M} v_j^k \int_\tau\A\bgrad(\widetilde \psi_i)\cdot\bgrad\widetilde \psi_j\,d\xx
=\langle\tlambda_k,\widetilde \psi_i\rangle_{\dtau}\quad\text{for all } i=1,\dots,M
\]
and
\[
 \sum_{j=1}^{M} q_j \int_\tau\A\bgrad(\widetilde \psi_i)\cdot\bgrad\widetilde \psi_j\,d\xx=\int_{\tau}f\widetilde \psi_i\:d\xx\quad\text{for all } i=1,\dots,M.
\]
Set 
\[
\eta_k^{ms} := \sum_{j=1}^{M}v_j^k \widetilde \psi_j \quad\text{and}\quad\eta_f^{ms} := \sum_{j=1}^{M} q_j  \widetilde \psi_j
\]
\item Compute $G_h\rho_k=:  \tlambda_k^{ms}$, for $k=1,\dots,N$, and $G_h \eta_f^{ms}=:  \tlambda_f^{ms}$, using $\eta_j^{ms}$  from  \eqref{e:Ghdef}, i.e.,
\[
\sum_{j=1}^{N^\prime}c_j^k \langle\tlambda_i, \eta_j^{ms}\rangle_\dtau
=\langle \tlambda_i,\rho_k\rangle_\dtau\quad\text{for all }i=1,\dots,N^\prime
\]
and 
\[
\sum_{j=1}^{N^\prime}d_j \langle\tlambda_i, \eta_j^{ms}\rangle_\dtau
=\langle  \tlambda_i,\eta_f^{ms}\rangle_\dtau\quad\text{for all }i=1,\dots,N^\prime.
\]
Set 
\[
\tlambda_j^{ms} := \sum_{k=1}^{N^\prime}c_j^k \tlambda_k \quad\text{and}\quad \tlambda_f^{ms} := \sum_{k=1}^{N^\prime} d_k  \tlambda_k\quad\text{with }j=1,\dots,N
\]
\item Solve~\eqref{e:rhodef}, i.e., compute $\alpha=\left\{\alpha_j \right\}_{j=1}^N$
\begin{equation}\label{e:rhohdef}
\sum_{j=1}^N\alpha_j\langle\tlambda_j^{ms},\rho_i\rangle_{\btriH}  =-\langle\lambda^0,\rho_i\rangle_\btriH+\langle \tlambda_f^{ms},\rho_i\rangle_{\btriH}\quad\text{with }i=1,\dots,N
\end{equation}
\item Compute $\rho_{H_{\Gamma}}$, $\tlambda_{H_{\Lambda}}$ and $\widetilde u_h$  
 \[\rho_{H_{\Gamma}}:=\sum_{i=1}^N\alpha_i\rho_i,\quad \tlambda_{H_{\Lambda}}:=\sum_{i=1}^N\alpha_i \tlambda_i^{ms}-\tlambda_f^{ms},\quad \widetilde u_h := \sum_{i=1}^N\alpha_i T_h \tlambda_i^{ms} -T_h\tlambda_f^{ms} +\eta_f^{ms} \] 
\item Compute $u_h^0$ from~\eqref{u0l0}
\item Compute $u_h$  and $\lambda_{H_\Lambda}$
\[ u_h   := u_h^0 + \sum_{i=1}^N\alpha_i T_h \tlambda_i^{ms} -T_h\tlambda_f^{ms} +\eta_f^{ms}\quad\text{and}\quad \lambda_{H_\Lambda} := \lambda^0 + \sum_{i=1}^N\alpha_i \tlambda_i^{ms}-\tlambda_f^{ms}\] 
or equivalently, 
\[ u_h :=   u_h^0 + \sum_{i=1}^N\alpha_i \sum_{j=1}^{N^\prime}c^i_j \sum_{k=1}^{M}v_k^j \widetilde \psi_k  -  \sum_{j=1}^{N^\prime}d_j  \sum_{k=1}^{M} v_k^j  \widetilde \psi_k   + \sum_{k=1}^{M} q_k  \widetilde \psi_k \]
and
\[ \lambda_{H_\Lambda} := \lambda^0 +\sum_{i=1}^N\alpha_i\sum_{j=1}^{N^\prime}c_j^i \tlambda_j -  \sum_{j=1}^{N} d_j  \tlambda_j. \]
\end{enumerate}

\begin{remark}
Some  points are worth mentioning:
\begin{itemize}
\item Although the solutions $u_h$ and $\lambda_{H_\Lambda}$ belong to the space $V_h$ and $\Lambda_{H_\Lambda}$ initially, they belong to the much smaller subspaces of dimension $N:=\dim(\Gamma_{H_ \Lambda})$. In fact, by construction, $\widetilde u_h \in \span\left\{\eta_1^{ms},\dots,\eta_N^{ms},\eta_f^{ms}\right\} \subset\widetilde V_h$ and $\tlambda_ {H_\Lambda} \in \span\left\{\tlambda_1^{ms},\dots,\tlambda_N^{ms}, \tlambda_f^{ms}\right\}\subset\tL_{H_\Lambda} $ , respectively.   Such a basis accounts for the multiscale characteristics of the problem, and are numerically approximated at the local level through steps $(i)-(iii)$ as the approximate formula for the base functions are not generally available. The MH$^2$M may be seen then as a member of reduced basis methods~\cite{BoyBriLelMadNguPat10}.
\item Steps $(i)-(iii)$ can be done in an offline stage. Furthermore, they are ``embarrassingly" parallel in the sense that the problems are local and entirely independent of each other. The global formulation $(iv)$ is the only global formulation because its degrees of freedom are nodal and associated with the coarse mesh skeleton;
\item All linear systems in steps $(ii)$-$(iv)$ are symmetric and positive definite. A closer look shows that two types of local linear systems must be assembled to solve the problems in $(ii)$-$(iv)$. The associated matrices can then be factored once and for all in an offline stage and then applied to the various right sides. This is particularly attractive when it comes to troubleshooting multiple queries as found in time marching process or in varying the source data $f$ in stochastic problems.
\end{itemize}
\end{remark}

\section{Numerical Results}
\label{sec:conv}

The following numerical experiments aim to access analytical predictions and compare the numerical performance of the MH$^2$M, MHM, MsFEM and FEM methods for two-dimensional problems. In what follows, we consider~\eqref{e:elliptic} with homogeneous Dirichlet boundary condition and $\Omega:=]0,1[\times]0,1[$. In the first set of tests the exact solution is known, and several convergence results are presented. We consider next a more demanding problem, that is, when the coefficients are oscillatory. 

\subsection{Problem with explicit solution}
For $\mathcal{A}=I$ and $f(x, y) :=-2x(x-1)-2y(y-1)$, the exact solution of~\eqref{e:elliptic} is 
\begin{equation*}
u(x,y)=x(x-1)y(y-1).
\end{equation*}

We start by considering polynomial approximation spaces for the unknown triple  $(\rho,\lambda,u)=(u|_{\btriH},\mathcal{A}\bgrad u\cdot\nn^\tau|_{\btriH},u)$ belonging to spaces $ H_0^{1/2}(\btriH)\times \Lambda\times H^1(\triH) $ using elements $\PP_{k+1}(F)\times\PP_k(F)\times\PP_{k+1}(\tau)$ and $\PP_{k+1}(F)\times\PP_{k+1}(F)\times\PP_{k+1}(\tau)$, with $k\geq 0$,  as described in Sections~\ref{ssec:ppp} and~\ref{ssec:kkk}, respectively. The approximations for traces of $u$ are polynomials of degree $k+1$ over the macro-edges and are continuous on the macro-skeleton. The discrete fluxes are piecewise discontinuous polynomials of order $k$ or $k+1$ over macro-edges. The  approximations for $u$ that are piecewise continuous polynomials of order $k+1$ within each macro-element, but might be discontinuous globally.

\color{black} We consider first a mesh refinement $H_\Gamma=H_\Lambda=H:=\sqrt2/2^{-j}$, $j=1,\,2,\, 3, \,4, \,5$ for elements of the form $\PP_{k+1}(F)\times\PP_k(F)\times\PP_{k+1}(\tau)$, with $h=H$ for $k=0,\,2$ and $h=H/2$ for $k=1$. We present in Figure~\ref{f10} the convergent results in the  $H^1(\T_H)$ semi-norm, displaying a $\mathcal{O}(H^{k+1})$ rate of convergence. Note that the element choices do not fulfill condition $(M2)$ for all $k$ and condition $(M1)$ for $k=0,\,2$, but are covered by  Remark~\ref{rem:relaxm1}. Numerical tests using the $\PP_{k+1}(F)\times\PP_{k+1}(F)\times\PP_{k+1}(\tau)$ approximation showed no noticeable improvements in the results, as anticipated in Section~\ref{ssec:kkk}, and are not displayed. \color{black}
\begin{figure}[h]
\center
\includegraphics[scale=0.7]{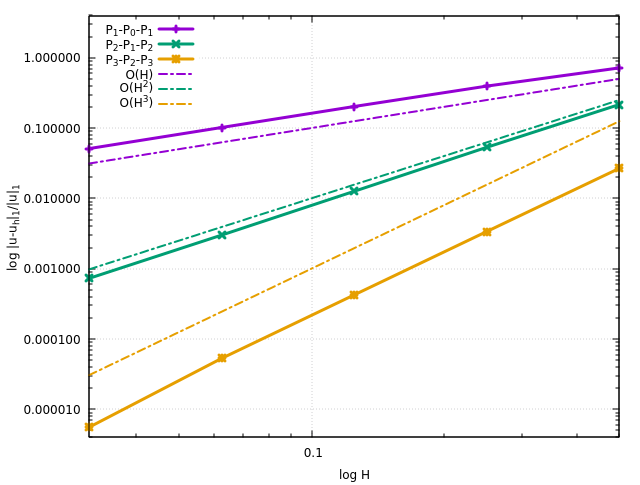}
\caption{Convergence results in the $H^1(\triH)$ relative semi-norm, with the $\PP_{k+1}(F)\times\PP_k(F)\times\PP_{k+1}(\tau)$ approximations. Here $k=0,\,1,\,2$. \color{black} The meshes are $H_\Gamma=H_\Lambda=H:=\sqrt 2/2^{-j}$, $j=1,\dots,5$, with $h=H$ for even $k$ and $h=H/2$ for $k=1$.  \color{black}}
\label{f10}
\end{figure}

The theory (see Theorem~\ref{conver} and Remark \ref{rem:kkk}) predicts that the coarse mesh can remain fixed while optimal convergence is achieved by refining the face meshes associated with the trace and flow variables, as well as the local submeshes, provided that Assumption $(M1)$ \color{black} and $(M2)$ are satisfied, for instance. \color{black} In this setting, we observe superconvergence with an additional convergence rate of $\mathcal{O}(H_\Gamma^{1/2})$, assuming the flux mesh and the local submeshes are ``sufficiently refined" relative to the trace mesh. Notably, unlike in the MHM method, the fluxes here are computed using local meshes, making them computationally affordable.

To illustrate this, consider the configuration $\PP_1(F)\times\PP_0(F)\times\PP_1(\tau)$, where the trace mesh is refined as $H_\Gamma=\sqrt{2}/2^{3+j}$ for $j=0,1,2,3$, while the flux and local meshes remain sufficiently refined. As shown in Figure~\ref{f40}, the results clearly exhibit the additional convergence rate of $\mathcal{O}(H_\Gamma^{1/2})$ in the (semi-)norms $H^1(\triH)$ and $L^2(\OO)$. This behavior of the MH$^2$M method extends beyond the scope of the current theoretical analysis. Nevertheless, it reflects a similar phenomenon observed in the MHM method, for which a superconvergence result has been rigorously proven~\cite{ChaParVal26}.
\begin{figure}[H]
\center
\includegraphics[scale=0.7]{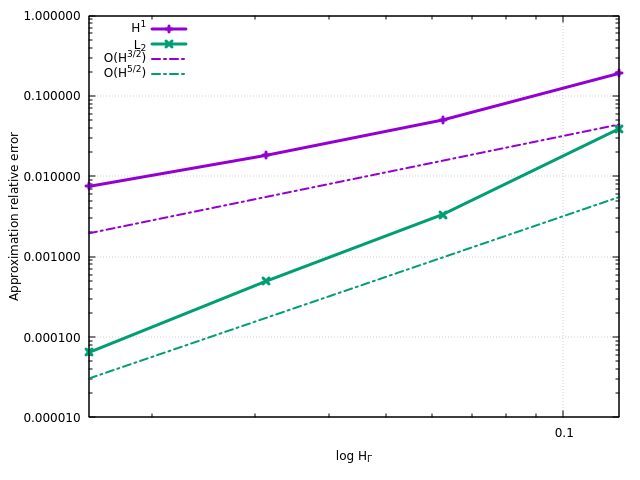}
\caption{$\PP_1(F)\times\PP_0(F)\times\PP_1(\tau)$ case: Approximation errors in relative $H^1(\triH)$ semi-norm and $L^2(\triH)$ norm, for  $\color{black} h,\,H_\Lambda\ll \color{black} H_\Gamma=\sqrt2/2^{3+j}$, $j=0,1,2,3$. We note an additional $\mathcal{O}(H_\Gamma^{1/2})$ convergence rates.}
\label{f40}
\end{figure}
%
%
%
Next, we compare the performance of the FEM/MsFEM, MHM, and MH$^2$M finite element methods, noting that for the Laplace equation, FEM and MsFEM yield identical results. The mesh size $H$ is kept fixed throughout, and we consider the approximation spaces \color{black} $\mathbb{P}_1(F)\times\mathbb{P}_0(F)\times\mathbb{P}_1(\tau)$ for the MH$^2$M and $\mathbb{P}_0(F)\times\mathbb{P}_1(\tau)$ for the MHM. \color{black}
The notations MHM-$i$ and MH$^2$M-$i$, with $i=1,2$, indicate the number of subdivisions per macro-edge, where $i=1$ corresponds to no subdivision.

In Figure~\ref{f50} (left), we display the relative $H^1(\triH)$ semi-norm error as a function of the size of the global matrix. We observe that the MH$^2$M method achieves a comparable error with significantly fewer degrees of freedom. In Figure~\ref{f50} (right), we compare the MHM and MH$^2$M methods using  \color{black} $\mathbb{P}_1(F)\times\mathbb{P}_0(F)\times\mathbb{P}_1(\tau)$ and $\mathbb{P}_0(F)\times\mathbb{P}_1(\tau)$ polynomial approximations.  \color{black} The relative $H^1(\triH)$ semi-norm error is plotted against the interface partition size $H_\Lambda=H_\Gamma=H/2^i$, for $i=0,1,2,3$. The numbers above each curve denote the size of the final system for the MH$^2$M method, while those below correspond to the MHM method. We note that both methods exhibit similar performance and display superconvergent behavior.

\begin{figure}[h]
\center
\includegraphics[scale=0.49]{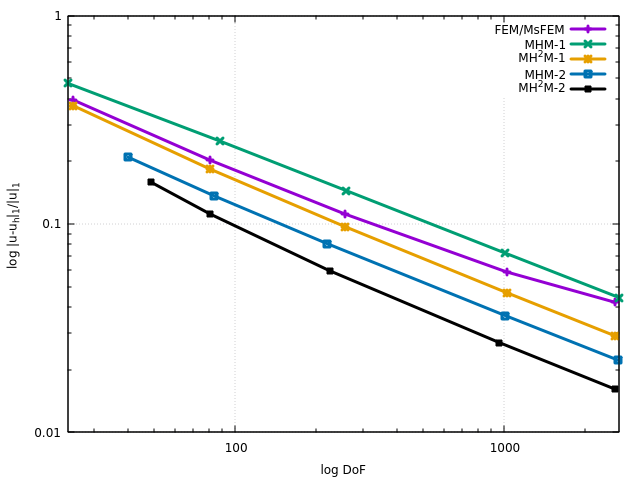}
\includegraphics[scale=0.54]{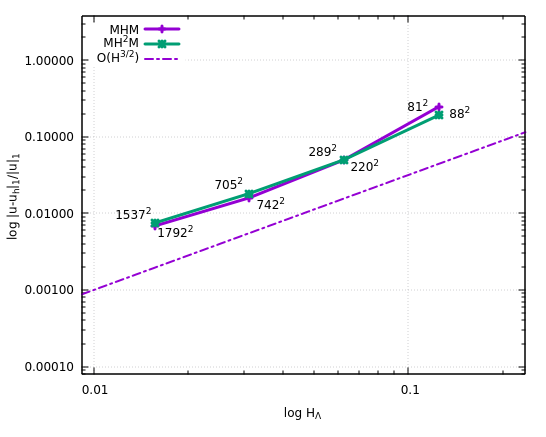}
\caption{Error curves with respect to the size of the final system (left) and to the interface partition (right). Here, MHM-$i$ and MH$^2$M-$i$, with $i = 1,\, 2$, indicate mesh partitions where $H_\Gamma = H_\Lambda = H/2^{i-1}$.}
\label{f50}
\end{figure}
%
%
\subsection{Oscillatory coefficient problem} 
\label{sec:oscill}
Consider now a non homogeneous coefficient of the form 
\begin{equation}\label{A(x/e)}
\A(x,y)=\frac{2+\gamma\sin(2\pi x/\varepsilon)}{2+\gamma \cos(2\pi y/\varepsilon)} + \frac{2+\sin(2\pi y/\varepsilon)}{2+\gamma\sin(2\pi x/\varepsilon)},
\end{equation}
where $\varepsilon>0$ is associated with the periodicity of $\A$. \color{black} In all cases, we set $\varepsilon = 1/14$. \color{black} We investigate the numerical performance of various multiscale methods, using a refined FEM solution as the reference with $\mathcal{H}=\sqrt{2}/128$. The surface plot of this reference solution is shown in Figure~\ref{OscSol} (left) and a surface of the MH$^2$M solution in Figure~\ref{OscSol} (right). \color{black} In this section, we used the elements $\mathbb{P}_1(F)\times\mathbb{P}_0(F)\times\mathbb{P}_1(\tau)$ for the MH$^2$M, and $\mathbb{P}_0(F)\times\mathbb{P}_1(\tau)$ for the MHM. \color{black}

\begin{figure}[H]
\center
\includegraphics[scale=0.3]{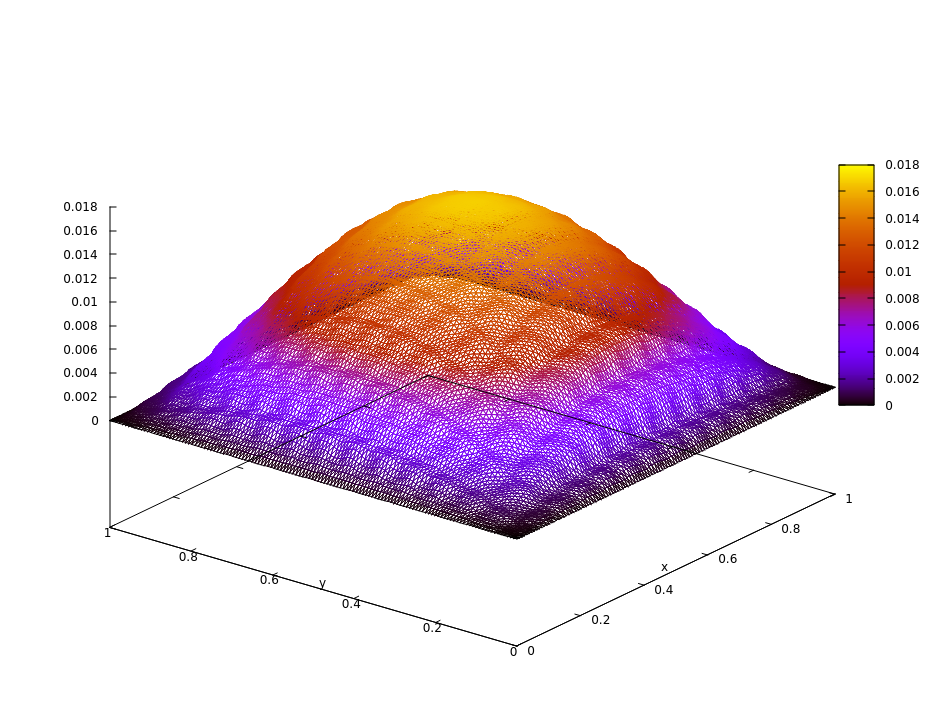}
\includegraphics[scale=0.39]{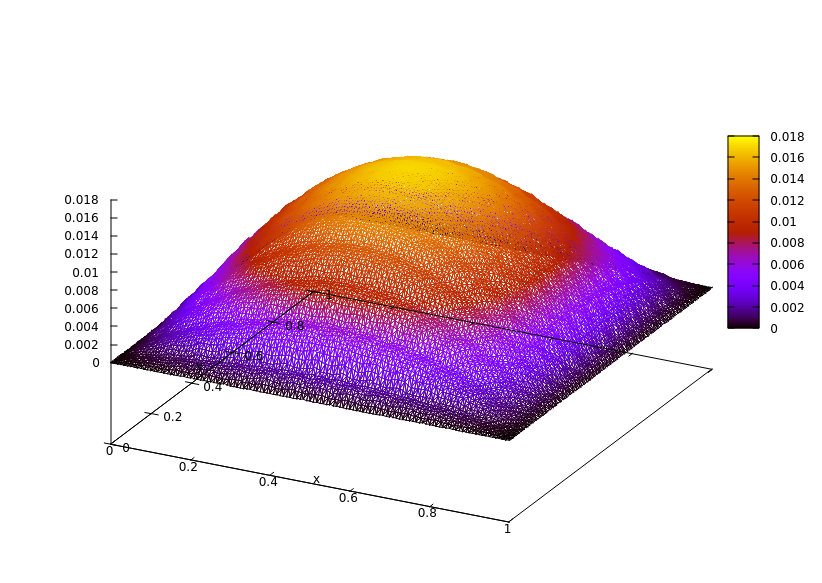}
\caption{Plot of the referential solution with $\mathcal{H}=\sqrt{2}/128$, on the left, and the numerical solution given by the MH$^2$M-$2$, where $H=\sqrt{2}/31$, $H_\Gamma=H_\Lambda=H/2$ and $h=H/4$.\label{OscSol}}
\end{figure}

We again consider the MHM-$i$ and MH$^2$M-$i$ schemes, where $i = 1,\, 4$ indicates mesh partitions defined by $H_\Gamma=H_\Lambda=H/2^{i-1}$. The results are shown in Figure~\ref{f:oscill}, and the label MH$^2$M$^\star$ refers to configurations where $H_\Lambda=H/8$. The value of $H$ varies across test cases to allow for a fair comparison based on the size of the final global system. The plots display cross-sections of the computed solutions at $y = 1/2$.  We observe that the MH$^2$M and MH$^2$M$^\star$ method provide more accurate results than the MHM method when using an equivalent number of degrees of freedom. 
\begin{figure}[h]
\center
\includegraphics[scale=0.46]{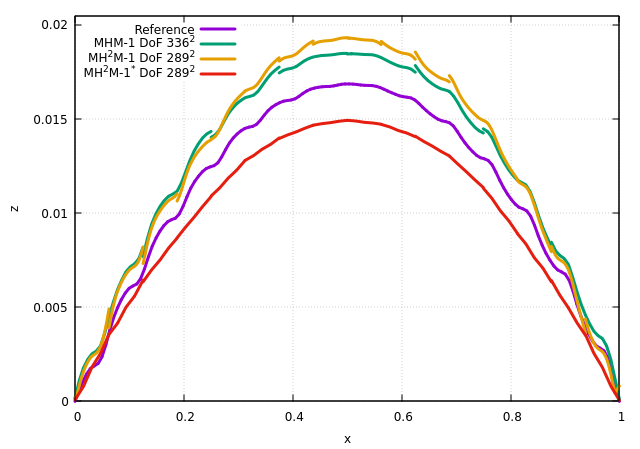}
\includegraphics[scale=0.46]{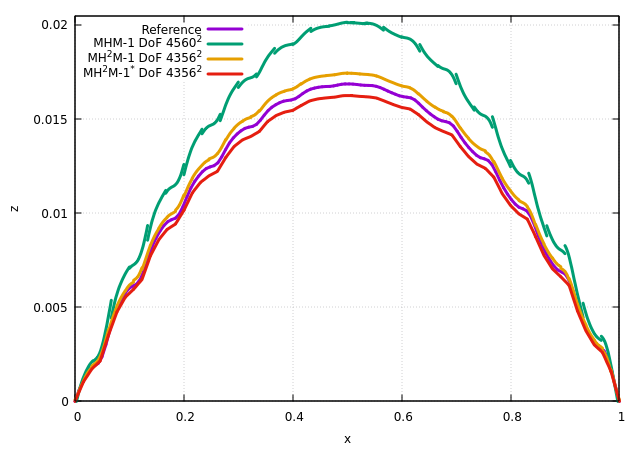}
\includegraphics[scale=0.46]{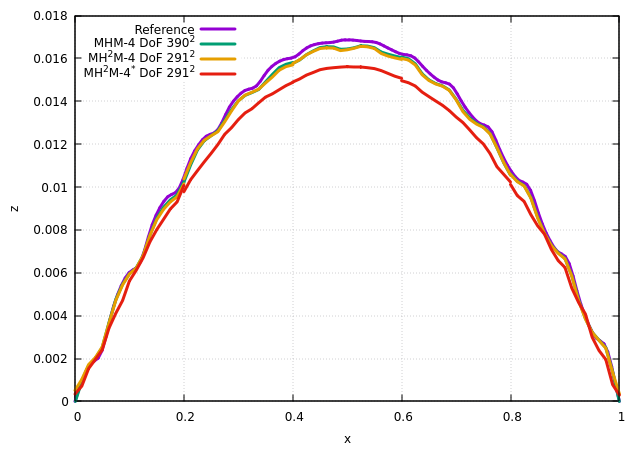}
\includegraphics[scale=0.46]{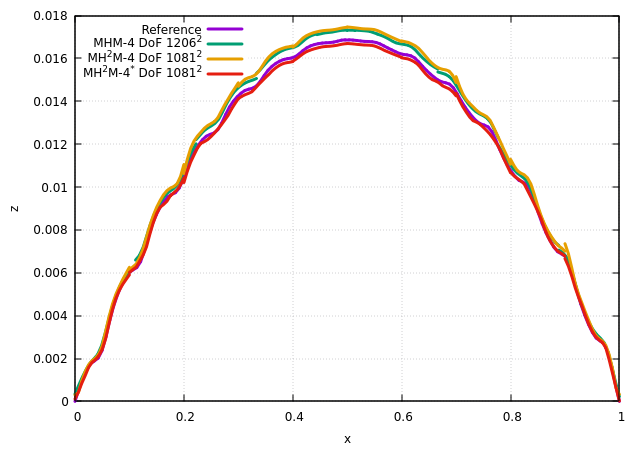}
\caption{Cross-section plots of the solutions at $y = 1/2$, with $\varepsilon=1/14$. On the top row, the fine-scale mesh size is $h = H/4$; on the bottom row, $h = H/8$. The MHM-i and MH$^2$M-i use $H_\Gamma=H_\Lambda=H/2^{i-1}$, and MH$^2$M-i$^*$ uses $H_\Gamma=H/2^{i-1}$ and $H_\Lambda=H/8$. In all cases, \color{black} DoF denotes the size of the global matrix.}
\label{f:oscill}
\end{figure}
 \color{black}
 
Figures~\ref{ress_mhm} and~\ref{ress_mh2m} display the numerical solutions obtained with the MHM-1 and MH$^2$M-1$^\star$ methods, respectively, as the size of the final global system increases. A noticeable resonance effect appears in the MHM results when $H\sim\sqrt{\varepsilon}$, in agreement with the findings of~\cite{ParValVer23}, whereas this phenomenon is not observed in the MH$^2$M-1$^\star$ results. This discrepancy may be attributed to the refinement of the face mesh for the flux, a strategy that was both observed and theoretically justified in~\cite{ParValVer23} for the MHM method. The key distinction is that, within the MH$^2$M framework, such refinement is performed locally and does not lead to an increase in the size of the global system. This contrast is noteworthy and motivates further theoretical investigation.
\begin{figure}[h]
\center
\includegraphics[scale=0.65]{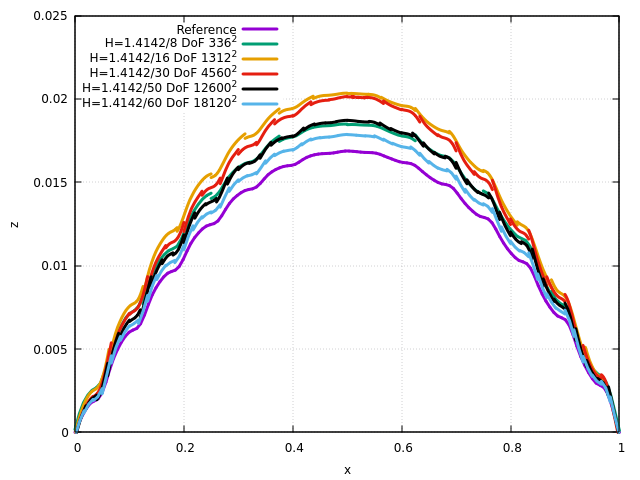}
\caption{Cross-section plots (at $y = 1/2$) of the solutions obtained with the MHM method for $\varepsilon = 1/14$. Resonance effects are clearly observed when $H\sim\sqrt{\varepsilon}\sim0.26$. } \label{ress_mhm}
\end{figure}
\begin{figure}[h]
\center
\includegraphics[scale=0.6]{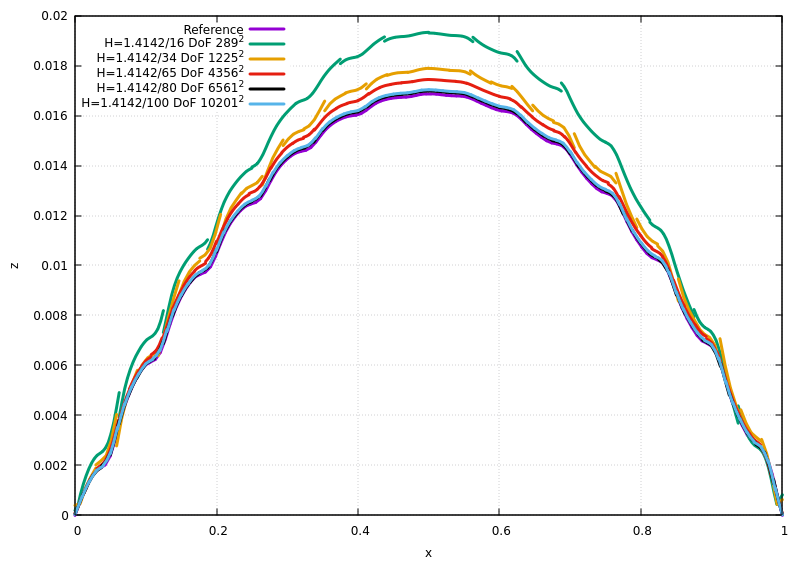}
\caption{Cross-section plots (at $y = 1/2$) of the solutions obtained with the MH$^2$M-1$^\star$ method for $\varepsilon = 1/14$. No resonance effects are observed.} \label{ress_mh2m}
\end{figure}

\section{Conclusion}
\label{sec:concl}

Brezzi and Marini's three-field method \cite{MR1262602} provided the background for new families of multiscale finite elements for the Poisson (Darcy) problem. The MH$^2$M incorporates a multiscale basis for the Lagrange multiplier associated with the flow variable, obtained from the multiscale basis of the original MHM (multiscale hybrid-mixed) method \cite{AraHarParVal13a}. Thanks to a new spatial decomposition, these multiscale bases are computed locally from positive definite algebraic systems and are ready to leverage parallel computational architectures. Unlike the MHM method, the global problem associated with MH$^2$M is coercive and the degrees of freedom are associated with the trace of the primal variable. Such similarity led us to establish a connection with MsFEM~\cite{EfeHouWu00}.

We proved that the MH$^2$M converges optimally with respect to the characteristic mesh lengths under reasonable conditions of compatibility between approximation spaces and under assumptions of local regularity (as usual in hybrid methods). Furthermore, we highlighted the interaction between the face mesh diameters ($H_{\Gamma}$ and $H_{\Lambda}$) and the second-level mesh parameter ($h$) on convergence. 

As verified by the numerical experiments, the MH$^2$M method requires fewer degrees of freedom than the MsFEM and MHM methods to achieve a given error level when using low-degree polynomial approximations. Superconvergence behavior was also observed when refining the face meshes associated with the trace variables, while keeping the coarse mesh fixed.  Furthermore, we have observed that MH$^2$M is less sensitive to resonance effects when approximating oscillatory solutions, in contrast to the MHM method. A rigorous analysis of the method's sensitivity to resonance phenomena lies beyond the scope of the present work and may be pursued by adopting the strategy proposed in~\cite{ParValVer17,ParValVer23}.

\section{Appendix}
\label{sec:appen}

\begin{lemma}\label{l:halfnorm}
Let $\tau\in\triH$ and $\xi\in H^{1/2}(\dtau)$. Then $|\xi|_{H^{1/2}(\dtau)}=|\phi_\xi|_{H_\A^1(\tau)}$, where $\phi_\xi$ weakly solves   
\begin{equation}\label{e:phixidef}
-\div\A\bgrad\phi_\xi=0\quad\text{in }\tau,
\qquad
\phi_\xi=\xi\quad\text{on }\dtau.  
\end{equation}
Moreover, if $\xi\in H_0^{1/2}(\btriH)$ and~\eqref{e:phixidef} holds for all $\tau$, then $\phi_\xi\in H_0^1(\OO)$ and
\[
|\xi|_{H^{1/2}(\btriH)}^2
=|\phi_\xi|_{H_\A^1(\OO)}^2
=\sum_{\tau\in\triH}|\xi|_{H^{1/2}(\dtau)}^2.
\]
\end{lemma}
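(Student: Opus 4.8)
The plan is to prove both identities through the Dirichlet (energy minimization) principle, exploiting that the $\A$-harmonic extension is precisely the minimizer of the $\A$-energy among all $H^1$ extensions of a prescribed boundary trace.

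First I would establish the local identity $|\xi|_{H^{1/2}(\dtau)}=|\phi_\xi|_{H_\A^1(\tau)}$. Fix $\tau$ and let $\phi\in H^1(\tau)$ be any extension with $\phi|_\dtau=\xi$. Writing $\phi=\phi_\xi+w$ with $w:=\phi-\phi_\xi$, the two functions share the trace $\xi$ on $\dtau$, so $w\in H_0^1(\tau)$. Expanding gives $|\phi|_{H_\A^1(\tau)}^2=|\phi_\xi|_{H_\A^1(\tau)}^2+2\int_\tau\A\bgrad\phi_\xi\cdot\bgrad w\,d\xx+|w|_{H_\A^1(\tau)}^2$, and the cross term vanishes because testing the weak form of~\eqref{e:phixidef} against $w\in H_0^1(\tau)$ yields $\int_\tau\A\bgrad\phi_\xi\cdot\bgrad w\,d\xx=0$. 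Hence $|\phi|_{H_\A^1(\tau)}^2=|\phi_\xi|_{H_\A^1(\tau)}^2+|w|_{H_\A^1(\tau)}^2\ge|\phi_\xi|_{H_\A^1(\tau)}^2$. Since $\phi_\xi$ is itself an admissible extension, taking the infimum over all such $\phi$ (attained at $w=0$) gives the claimed equality.

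For the global statement I would first verify that $\phi_\xi\in H_0^1(\OO)$. As $\xi\in H_0^{1/2}(\btriH)$, it is the skeleton trace of some $v\in H_0^1(\OO)$, so $\xi$ is single-valued across interior faces and vanishes on $\dO$. The elementwise $\A$-harmonic extensions therefore carry matching one-sided traces on every interior face, and a piecewise $H^1$ function whose traces agree across all interfaces is globally $H^1(\OO)$; its trace on $\dO$ equals $\xi|_{\dO}=0$, placing the glued function in $H_0^1(\OO)$. The sum identity $|\phi_\xi|_{H_\A^1(\OO)}^2=\sum_{\tau\in\triH}|\phi_\xi|_{H_\A^1(\tau)}^2=\sum_{\tau\in\triH}|\xi|_{H^{1/2}(\dtau)}^2$ is then immediate from the definition of the global seminorm and the local identity. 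To match $|\xi|_{H^{1/2}(\btriH)}^2$ with $|\phi_\xi|_{H_\A^1(\OO)}^2$, I would repeat the Dirichlet-principle argument globally: any competitor $\phi\in H_0^1(\OO)$ with $\phi|_\btriH=\xi$ splits as $\phi=\phi_\xi+w$ with $w\in H_0^1(\OO)$ and $w|_\btriH=0$, whence $w|_\tau\in H_0^1(\tau)$ elementwise; the cross terms vanish element by element exactly as before, so $|\phi|_{H_\A^1(\OO)}^2=|\phi_\xi|_{H_\A^1(\OO)}^2+|w|_{H_\A^1(\OO)}^2$ and $\phi_\xi$ is the minimizer, giving the second equality.

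The routine energy expansions are not the difficulty; the one step requiring care is the gluing claim, namely that the family of elementwise $\A$-harmonic extensions assembles into a genuine $H^1(\OO)$ function. This hinges on the single-valuedness of $\xi$ on $\btriH$ inherited from $\xi\in H_0^{1/2}(\btriH)$ together with the standard characterization that matching interface traces upgrade a piecewise $H^1$ function to a global one. Once this is secured, the orthogonality of the $\A$-harmonic part to $H_0^1$ perturbations delivers all three equalities.
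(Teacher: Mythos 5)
Your proof is correct and follows essentially the same route as the paper: the Dirichlet principle identifying the $\A$-harmonic extension as the energy minimizer locally, then gluing the elementwise extensions into an $H_0^1(\OO)$ function to obtain the global identities. You merely make explicit (via the orthogonality expansion $|\phi|_{H_\A^1}^2=|\phi_\xi|_{H_\A^1}^2+|w|_{H_\A^1}^2$ and the global minimization step) what the paper's proof treats as immediate.
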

\begin{proof}
Given $\xi\in H^{1/2}(\dtau)$, it is immediate that $|\phi_\xi|_{H_\A^1(\tau)}$ minimizes $|\cdot|_{H_\A^1(\tau)}$ over the space of $H^1(\tau)$ functions with trace $\xi$ on $\dtau$. Similarly, if $\xi\in H_0^1(\OO)$, then~\eqref{e:phixidef} holds in each element. But then $\phi_\xi\in H^1(\triH)$ and $\phi_\xi|_\btriH\in H_0^{1/2}(\btriH)$ implies that $\phi_\xi\in H_0^1(\OO)$. Then
\[
\sum_{\tau\in\triH}|\xi|_{H^{1/2}(\dtau)}^2
=\sum_{\tau\in\triH}|\phi_\xi|_{H_\A^1(\tau)}^2
=|\phi_\xi|_{H_\A^1(\OO)}^2. 
\]
\end{proof}
In the following lemma, we collect some useful technical results. 
\begin{lemma}\label{l:ids}
For a fixed $\tau\in\triH$ the following results hold:
\begin{enumerate}[(i)]
\item $|T\tmu|_{H^{1/2}(\dtau)}=|T\tmu|_{H_\A^1(\tau)}=|\tmu|_{H^{-1/2}(\dtau)}$ for all $\tmu\in\tL$; \label{e:id1}
\item $|G\xi|_{H^{-1/2}(\dtau)}=|\xi|_{H^{1/2}(\dtau)}$ for all $\xi\in H^{1/2}(\dtau)$; \label{e:id2}
\item $\langle\tmu,\xi\rangle_{\dtau}\le|\tmu|_{H^{-1/2}(\dtau)}|\xi|_{H^{1/2}(\dtau)}$ for all $\tmu\in\tL$ and $\xi\in H^{1/2}(\dtau)$. Also, for all $\xi\in H^{1/2}(\dtau)$, 
\begin{equation}
\label{e:id3}
\sup_{\tmu\in \tH^{-1/2}(\dtau)}\frac{\langle\tmu,\xi\rangle_{\dtau}}{|\tmu|_{H^{-1/2}(\dtau)}}
=|\xi|_{H^{1/2}(\dtau)};
\end{equation} 
\item $\langle\tmu,T\teta\rangle_\btriH\le|\tmu|_\Lambda|\teta|_\Lambda$ for all $\tmu$, $\teta\in\tL$.
\label{e:id4}
\end{enumerate}
\end{lemma}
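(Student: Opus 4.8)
The plan is to establish (i) first and then reduce (ii)--(iv) to it. The crucial point for (i) is that $T\tmu$ is $\A$-harmonic on $\tau$: any $v\in H^1_0(\tau)$ has zero trace, hence zero boundary average, so it is admissible in~\eqref{e:definitionT}, and the integration-by-parts definition of $\langle\cdot,\cdot\rangle_\dtau$ makes the pairing vanish on zero-trace functions; thus $\int_\tau\A\bgrad(T\tmu)\cdot\bgrad v\,d\xx=\langle\tmu,v\rangle_\dtau=0$ for all such $v$. Consequently $T\tmu$ is the $\A$-harmonic extension of its own trace, and Lemma~\ref{l:halfnorm} gives $|T\tmu|_{H^{1/2}(\dtau)}=|T\tmu|_{H_\A^1(\tau)}$. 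For the remaining identity in (i) I would argue by duality: testing~\eqref{e:definitionT} with $\tv=T\tmu$ yields $|T\tmu|_{H_\A^1(\tau)}^2=\langle\tmu,T\tmu\rangle_\dtau$, and since $T\tmu|_\dtau\in\tH^{1/2}(\dtau)$ with $|T\tmu|_{H^{1/2}(\dtau)}=|T\tmu|_{H_\A^1(\tau)}$, this test function saturates the supremum in~\eqref{norm12d}, giving $|\tmu|_{H^{-1/2}(\dtau)}\ge|T\tmu|_{H_\A^1(\tau)}$; conversely, for any $\tphi\in\tH^{1/2}(\dtau)$ its harmonic extension $\psi$ has zero boundary average, so $\langle\tmu,\tphi\rangle_\dtau=\int_\tau\A\bgrad(T\tmu)\cdot\bgrad\psi\,d\xx\le|T\tmu|_{H_\A^1(\tau)}|\tphi|_{H^{1/2}(\dtau)}$ by Cauchy--Schwarz and Lemma~\ref{l:halfnorm}, which gives the reverse bound.

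For (ii) I would combine (i) with the trace identity~\eqref{e:tHident}: by (i), $|G\xi|_{H^{-1/2}(\dtau)}=|TG\xi|_{H_\A^1(\tau)}$, while $TG\xi$ is $\A$-harmonic with trace $\xi+c_\xi$, so Lemma~\ref{l:halfnorm} together with the invariance of the $H^{1/2}$ seminorm under additive constants gives $|TG\xi|_{H_\A^1(\tau)}=|\xi+c_\xi|_{H^{1/2}(\dtau)}=|\xi|_{H^{1/2}(\dtau)}$. For (iii), the first inequality follows by splitting $\xi=\txi+\bar\xi$ into its zero-average part and its boundary mean; since $\tmu\in\tL$ annihilates constants, $\langle\tmu,\xi\rangle_\dtau=\langle\tmu,\txi\rangle_\dtau\le|\tmu|_{H^{-1/2}(\dtau)}|\txi|_{H^{1/2}(\dtau)}=|\tmu|_{H^{-1/2}(\dtau)}|\xi|_{H^{1/2}(\dtau)}$ directly from~\eqref{norm12d}. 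This yields ``$\le$'' in~\eqref{e:id3}, and for ``$\ge$'' (in the non-constant case) I take the saturating functional $\tmu=G\xi$, for which the definition of $G$ and (ii) give $\langle G\xi,\xi\rangle_\dtau=|TG\xi|_{H_\A^1(\tau)}^2=|\xi|_{H^{1/2}(\dtau)}^2$ and $|G\xi|_{H^{-1/2}(\dtau)}=|\xi|_{H^{1/2}(\dtau)}$, so the quotient equals $|\xi|_{H^{1/2}(\dtau)}$.

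Part (iv) is then immediate: applying (iii) and then (i) on each element gives $\langle\tmu,T\teta\rangle_\dtau\le|\tmu|_{H^{-1/2}(\dtau)}|T\teta|_{H^{1/2}(\dtau)}=|\tmu|_{H^{-1/2}(\dtau)}|\teta|_{H^{-1/2}(\dtau)}$; summing over $\tau\in\triH$ and using the discrete Cauchy--Schwarz inequality with the definition of $|\cdot|_\Lambda$ in~\eqref{norm12d} finishes the estimate. I expect the main obstacle to be part (i), specifically the verification that $T\tmu$ is genuinely $\A$-harmonic and the two-sided duality identification of $|T\tmu|_{H_\A^1(\tau)}$ with $|\tmu|_{H^{-1/2}(\dtau)}$; both rest on the fact that $\langle\tmu,\cdot\rangle_\dtau$ depends only on the boundary trace and vanishes on $H^1_0(\tau)$, which must be read off carefully from its integration-by-parts definition. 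Once (i) is secured, (ii)--(iv) are short.
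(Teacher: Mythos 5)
Your proposal is correct and follows essentially the same route as the paper: part (i) via the $\A$-harmonicity of $T\tmu$ together with Lemma~\ref{l:halfnorm} and a duality argument for $|\tmu|_{H^{-1/2}(\dtau)}=|T\tmu|_{H_\A^1(\tau)}$, part (ii) from (i) applied to $\tmu=G\xi$ with the trace identity~\eqref{e:tHident}, part (iii) from the seminorm definitions with the supremum saturated by $G\xi$ (which the paper leaves implicit as ``follows immediately''), and part (iv) by combining (iii), (i) and discrete Cauchy--Schwarz. Your explicit handling of the constant-trace edge case in (iii) and the naming of the saturating functional are minor elaborations of the same argument, not a different approach.
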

\begin{proof}
The first identity in~\eqref{e:id1} follows from the definition of $T$ and Lemma~\ref{l:halfnorm}. To the second identity of~\eqref{e:id1} , we have:
\begin{multline*}
|\tmu|_{H^{-1/2}(\dtau)}
=\sup_{\psi\in\tH^{1/2}(\dtau)}\frac{\langle\tmu,\psi\rangle_\dtau}{|\psi|_{H^{1/2}(\dtau)}}
=\sup_{\substack{v\in\tH^{1}(\tau)}}\frac{\langle\tmu,v\rangle_\dtau}{|v|_{H^{1}_\A(\tau)}}
=\sup_{\substack{v\in\tH^{1}(\tau)}}\frac{\int_\tau\A\bgrad T\tmu\cdot\bgrad v\,d\xx}{|v|_{H^{1}_\A(\tau)}}
=|T\tmu|_{H_\A^1(\tau)}. 
\end{multline*}
Next,~\eqref{e:id2} follows from~\eqref{e:id1} with $\tmu=G\xi$ and~\eqref{e:tHident}. Item (iii) follows from the definition of the semi-norms $|\cdot|_{H^{1/2}(\dtau)}$ and  $|\cdot|_{H^{-1/2}(\dtau)}$. To show~\eqref{e:id3}, first denote by  $\phi_\xi$ the $\A$-harmonic extension of $\xi$, i.e., the solution of~\eqref{e:phixidef}. From~\eqref{e:id1}, we gather that   
\[
\frac{\langle\tmu,\xi\rangle_{\dtau}}{|\tmu|_{H^{-1/2}(\dtau)}}
=\frac{\int_\tau\A\bgrad T\tmu\cdot\bgrad\phi_\xi\,d\xx}{|T\tmu|_{H_\A^1(\tau)}}
\le|\phi_\xi|_{H_\A^1(\tau)}
=|\xi|_{H^{1/2}(\dtau)}
\]
where we used Lemma~\ref{l:halfnorm} at the last step. The identity with the supremum follows immediately. Finally, identity~\eqref{e:id4} follows from (iii) with $\xi =  T\teta$ and~\eqref{e:id1}, and the definition of the semi-norm $|\cdot |_{\Lambda}$ in \eqref{norm12d}.
\end{proof}

\color{black}
We provide below the details of the proof of the following Lemma. 
\begin{lemma}
\label{l0:m2}
Under Assumption  $(M2)$, there exists a mapping $\pi_\Lambda : \tL \rightarrow \tL_{H_{\Lambda_0}}$  such that for all $\tmu\in\tL$ and $\tau\in\triH$, it follows that
\begin{equation}
\label{piGB-demo}
\begin{gathered}
 \int_{\dtau} \pi_{\Lambda}(\tmu)\, \xi \,d\xx = \langle \tmu, \xi \rangle_{\dtau} \quad\text{for all }\xi \in \Gamma_{H_\Gamma}, \\
 |\pi_\Lambda(\tmu)|_{H^{-1/2}(\dtau)} \leq\alpha_\tau\, |\tmu |_{H^{-1/2}(\dtau)},
 \end{gathered}
\end{equation}
where $\alpha_\tau$ is  positive constant independent of mesh parameters.
\end{lemma}



\begin{proof}
Let $\tau$ be an element of $\triH$, $F$ an edge in $\EE_\Gamma(\dtau)$, and from Assumption $(M2)$, let $\mathfrak{f}_1$ and $\mathfrak{f}_2$ denote the two edges in $\EE_\Lambda(\dtau)$ such that
\[
F = \mathfrak{f}_1 \cup \mathfrak{f}_2 .
\]

Moreover, let $\mu_i^j$, $i=1,\ldots,k+1$, be a basis for the restriction of $\Lambda_{H_{\Lambda_0}}(\dtau)$ to the edge $\mathfrak{f}_j$, $j=1,2$, and let $\xi_l$, $l=1,\ldots,k+2$, be a local basis for $\Gamma_{H_{\Gamma}}(\dtau)|_F$ the restriction of $\Gamma_{H_{\Gamma}}(\dtau)$ to $F$.
Note that, given  $\xi\in\Gamma_{H_{\Gamma}}(\dtau)|_F$,  we get

\begin{equation}
\label{inject}
 \int_{\frak{f}_j}\mu_i^j\, \xi \,d\xx \,= \,0 \quad\text{for all } \mu_i^j \quad\Rightarrow  \xi = 0.
\end{equation}
To see this, first note that, since $\xi$ is $L^2(\mathfrak{f}_j)$-orthogonal to all polynomials of degree at most $k$, it can be represented on each $\mathfrak{f}_j$ by a Legendre polynomial of degree $k+1$. Therefore, if $\xi \neq 0$, then $\xi$ would have $2(k+1)$ distinct zeros on $F$ (namely, $k+1$ zeros on each edge $\mathfrak{f}_j$). This contradicts the fact that $\xi$ is a polynomial of degree $k+1$ on $F$, and~\eqref{inject} follows.

The rectangular moment matrix with entries
\[
\int_F \mu_i^j \, \xi_l \, d\xx,
\qquad i=1,\ldots,k+1, \quad l=1,\ldots,k+2, \quad j=1,\,2,
\]
represents the operator
\[
B:\Lambda_{H_{\Lambda_0}}(\dtau)|_F \rightarrow
\left(\Gamma_{H_\Gamma}(\dtau)|_F\right)'.
\]
By \eqref{inject}, its adjoint
\[
B^T:\Gamma_{H_\Gamma}(\dtau)|_F
\rightarrow
\left(\Lambda_{H_{\Lambda_0}}(\dtau)|_F\right)'
\]
is injective, and thus, $B$ is surjective. As a result, there exists a set of functions
$\{\mu_i\}_{i=1}^{k+2}$ spanning a subspace of
$\Lambda_{H_{\Lambda_0}}(\dtau)|_F$ such that the restriction of $B$ to this subspace is bijective.
Inspired by the dual-basis construction
of~\cite{Woh00}, we choose the local dual functions \(\mu_i\) such that 
%
\begin{equation}
\label{biortho}
\int_{F} \mu_i\, \xi_j \, d\xx \,=\, \delta_{ij}\qquad i,\,j =1,\ldots,k+2.
\end{equation}

Next, based on the local averaging construction underlying the 
Scott--Zhang interpolation operator~\cite{ScoZha90}, we define the 
following operators: Given \(\xi\in H^{1/2}(\dtau)\) and 
\(\mu\in H^{-1/2}(\dtau)\), consider the moments  
\[
\langle \mu, \xi_i \rangle_{\dtau}\quad\text{and}\quad \langle \mu_i, \xi \rangle_{\dtau},
\]
to  define the Fortin operator $\pi_\Lambda^\tau : H^{-1/2}(\dtau) \rightarrow \Lambda_{H_{\Lambda_0}}(\dtau)$ as follows
\begin{equation}
\label{piGam}
 \pi_\Lambda^\tau(\mu) := \sum_{i=1}^{N} \mu_i \langle \mu, \xi_i \rangle_{\dtau},
\end{equation}
and the mappings $\I_\Gamma: H^{1/2}(\dtau)\rightarrow \Gamma_{H_\Gamma}(\dtau)$ 
by
\begin{equation}
\label{ISZ}
\I_\Gamma(\xi) := \sum_{i=1}^N  \langle \mu_i, \xi \rangle_{\dtau}\xi_i
\end{equation}
for $\xi\in H^{1/2}(\dtau)$. 
Hereafter, $N:=\dim \Gamma_{H_\Gamma}(\dtau)$. Using the definition of $\pi_\Lambda^\tau(\cdot)$ in \eqref{piGam} and \eqref{biortho}, we get
\[
 \int_{\dtau} \pi_{\Lambda}^\tau(\mu)\, \xi_j \,d\xx = \sum_{i=1}^N  \int_{\dtau} \mu_i \xi_j  d\xx \, \langle \mu, \xi_i \rangle_{\dtau}= \sum_{i=1}^N \sum_{F\subset\dtau} \int_{F} \mu_i \xi_j  d\xx \, \langle \mu, \xi_i \rangle_{\dtau}=\langle \mu, \xi_j \rangle_{\dtau}.
\]

As a result, decomposing  $\xi \in \Gamma_{H_\Gamma}(\dtau)$ as
\[
\xi = \sum_{i=1}^N  c_i\xi_i\quad\text{where } c_i\in\RR,
\]
we get, for all $\mu \in H^{-1/2}(\dtau)$, 
\[
\int_{\dtau} \pi_{\Lambda}^\tau(\mu)\, \xi \,d\xx = \sum_{i=1}^N c_i \int_{\dtau} \pi_{\Lambda}^\tau(\mu)\, \xi_i \,d\xx = \langle \mu, \xi \rangle_{\dtau},
\]
which implies the first condition in \eqref{piGB-demo} replacing $\mu$ by $\tmu$ above. Note that the image of $\pi_\Lambda^\tau(\cdot)$ given in \eqref{piGam}  is a subset of  $\tL_{H_{\Lambda_0}}$ when it acts on space $\tL$.

Now, we turn   to prove the stability of $\pi_\Lambda^\tau(\cdot)$ in the $\tH^{-1/2}(\dtau)$ seminorm. First, observe that $\I_\Gamma(\cdot)$ is a projection operator  onto spaces $ \Gamma_{H_\Gamma}(\dtau)$  by \eqref{biortho}, and $\I_\Gamma(\cdot)$ corresponds to the adjoint of $\pi_\Lambda^\tau(\cdot)$. Indeed,
\begin{equation}
\label{eq1-demo}
\langle \mu, \I_\Gamma(\xi) \rangle_{\dtau} = \sum_{i=1}^N \langle \mu, \xi_i \rangle_{\dtau} \langle \mu_i, \xi \rangle_{\dtau} = \langle \pi_{\Lambda}(\mu), \xi \rangle_{\dtau}.
\end{equation}

Hence, assuming the existence of a positive constant $\alpha_\tau$, independent of $H_\tau$, such that 
\begin{equation}
\label{boundIG}
|\I_\Gamma(\xi)|_{H^{1/2}(\dtau)} \leq \alpha_\tau \, |\xi |_{H^{1/2}(\dtau)}\quad\forall \xi \in H^{1/2}(\dtau),
\end{equation}
then, we obtain  
\begin{equation*}
\begin{aligned}
|\pi_{\Lambda}^\tau(\tmu)|_{H^{-1/2}(\dtau)} & = \sup_{\txi\in \tH^{1/2}(\dtau)} \frac{\langle \pi_{\Lambda}^\tau(\tmu), \txi \rangle_{\dtau}}{ |\txi |_{H^{1/2}(\dtau)}} \\
& = \sup_{\txi\in \tH^{1/2}(\dtau)} \frac{\langle \tmu, \I_\Gamma(\txi) \rangle_{\dtau}}{ |\txi |_{H^{1/2}(\dtau)}} \\
& \leq  |\tmu|_{H^{-1/2}(\dtau)} \sup_{\txi\in \tH^{1/2}(\dtau)} \frac{| \I_\Gamma(\txi)|_{H^{1/2}(\dtau)}}{ |\txi |_{H^{1/2}(\dtau)}} \\
&\leq \alpha_\tau\, |\tmu|_{H^{-1/2}(\dtau)},
\end{aligned}
\end{equation*}
where we used the definition of the seminorm $|\cdot|_{H^{-1/2}(\dtau)}$, \eqref{eq1-demo} and \eqref{boundIG}. 
The result \eqref{piGB-demo} follows defining  $\pi_{\Lambda}(\cdot)$ such that $\pi_{\Lambda}(\cdot)|_\tau := \pi_{\Lambda}^\tau(\cdot)$ for all $\tau\in\triH$ restricted to space $\tL$.

We observe that \eqref{boundIG} is a consequence of the standard Scott--Zhang patchwise argument (see ~\cite{ScoZha90,Cia13}), together with interpolation theory (see~\cite{MCL00}[Chapter 3] and \cite{Heu14}). For completeness, we detail the proof in the present setting. For each edge $F\subset\dtau$, define
\[
 I(F) := \left\{i: \operatorname{supp}(\xi_i)\cap F\neq\emptyset\right\} \quad\text{and} \quad \omega_F := \cup_{i\in I(F)} F_i,
\]
and note that $\omega_F$ consists of at most the edge $F$ and its two neighboring edges. Moreover, the cardinality of $ I(F)$ is uniformly bounded  since the polynomial degree $k+1$ is fixed.

Next, for a fixed node $i$, select an edge $F_i$  such that $\operatorname{supp} \xi_i\cap F_i \neq\emptyset$, and let 
$\bar\mu_i$ be the associated local dual basis function, that is, the function satisfying \eqref{biortho}  with replaced $F$ by $F_i$.  Then, define $\mu_i\in\Lambda_{H_{\Lambda_0}}(\partial\tau)$ by setting
$\mu_i:=\bar\mu_i$ on $F_i$ and $\mu_i:=0$ on $\partial\tau\setminus F_i$.
Observe that $\{\xi_j|_{F_i}\}_{j\in I(F_i)}$ forms a basis for $\PP_{k+1}(F_i)$ and that
\[
\int_{\partial\tau}\mu_i\,\xi_j\,d\xx = \int_{F_i}\bar\mu_i\,\xi_j|_{F_i}\,d\xx = \delta_{ij}, \quad i,\,j=1,\ldots,N.
\]

From a standard scaling argument together with the  condition~\eqref{biortho}, and the fact that the lengths of the sub-edges $\mathfrak{f}_1$ and $\mathfrak{f}_2$ are uniformly comparable to that of $F_i$, we obtain
\begin{equation}
\label{scaling}
\|\xi_i\|_{L^2(F)} \leq C\, H_{F}^{1/2}\,
\quad\text{and}\quad \|\mu_i\|_{L^2(F_i)} \leq C\, H_{F_i}^{-1/2},
\end{equation}
where $H_F$ and $H_{F_i}$ are the diameters of $F$ and $F_i$, respectively. Assumption~$(M2)$ ensures that these diameters are uniformly comparable, and constants $C$ are independent of the mesh parameters. Using~\eqref{ISZ}, it follows that, for every edge $F\subset\partial\tau$,
\[
\I_\Gamma(\xi)|_F = \sum_{i\in I(F)} \langle \mu_i,\xi\rangle_{\partial\tau}\,\xi_i|_F = \sum_{i\in I(F)} \langle \mu_i,\xi\rangle_{F_i}\,\xi_i|_F,
\]
since $\operatorname{supp}\mu_i\subset F_i$. Then, applying the Cauchy--Schwarz inequality together with \eqref{scaling} and the uniform equivalence between the diameters $H_F$ and $H_{F_i}$, we obtain
\begin{equation*}
\begin{aligned}
\|\I_\Gamma(\xi)\|_{{L^2}(F)}&\leq \sum_{i\in I(F)} 
 \|\xi_i \|_{L^2(F)}  |\langle\mu_i,\xi\rangle_{F_i}|  \\
 &\leq \sum_{i\in I(F)} 
 \|\xi_i \|_{L^2(F)}  \|\mu_i\|_{L^2(F_i)} \|\xi\|_{L^2(F_i)}  \\
 & \leq C\,\|\xi \|_{L^2(\omega_F)}.
 \end{aligned}
\end{equation*}
 Squaring the previous inequality and summing over the edges gives

\begin{equation*}
\begin{aligned}
\| \I_\Gamma(\xi)\|_{{L^2}(\dtau)}^2 = \sum_{F\subset\dtau}\| \I_\Gamma(\xi)\|_{{L^2}(F)}^2
 & \leq C\,\sum_{F\subset\dtau} \|\xi \|_{L^2(\omega_F)}^2.
 \end{aligned}
\end{equation*}
Since the patches $\omega_F$ have uniformly bounded overlap, there exists a constant $C$, independent of $H_\tau$  and $N$,  such that
\begin{equation}\label{e:IboundL2}
\|\I_\Gamma(\xi)\|_{L^2(\dtau)}  \leq C\, \|\xi \|_{L^2(\dtau)}.
\end{equation}
In addition, since $\I_\Gamma(\cdot)$ is a projection, we have
$ \I_\Gamma(\xi_F)|_F=\xi_F$, where $\xi_F:=\frac{1}{|\omega_F|}\int_{\omega_F}\xi\,d\xx.$ Consequently,
$\I_\Gamma(\xi)|_F = \xi_F+\I_\Gamma(\xi-\xi_F)|_F$. Then, for every edge $F\subset\omega_F$, it follows that
\[
\begin{aligned}
|\I_\Gamma(\xi)|_{H^1(F)} &= |\I_\Gamma(\xi-\xi_F)|_{H^1(F)} \\
&\le \frac{C}{H_F}\,\|\I_\Gamma(\xi-\xi_F)\|_{L^2(F)} \\
&\le \frac{C}{H_F}\,\|\xi-\xi_F\|_{L^2(\omega_F)} \\
&\le C\,|\xi|_{H^1(\omega_F)},
\end{aligned}
\]
where we used an inverse inequality, the $L^2$-stability of $\I_\Gamma(\cdot)$ established above, and the Poincar\'e inequality on each edge contained in $\omega_F$. Squaring the above inequality and summing over all edges yields
\begin{equation}\label{e:IboundH1}
|\I_\Gamma(\xi)|_{H^1(\partial\tau)} \le C\,|\xi|_{H^1(\partial\tau)}.
\end{equation}\
As a result, the operator $\I_\Gamma(\cdot)$ is stable in both
$L^2(\partial\tau)$ and $H^1(\partial\tau)$, with constants independent of the mesh parameters. 
By the interpolation identity
\[
H^{1/2}(\partial\tau)
=
[L^2(\partial\tau),H^1(\partial\tau)]_{1/2},
\]
and the equivalence between the interpolation norm and the trace norm on $H^{1/2}(\partial\tau)$, estimate~\eqref{boundIG} follows from~\eqref{e:IboundL2} and~\eqref{e:IboundH1}
\end{proof}
\color{black}

 \color{black}

\bibliographystyle{plain}
\bibliography{AFF.bib}

\end{document}